\def\maxwidth{ %
  \ifdim\Gin@nat@width>\linewidth
    \linewidth
  \else
    \Gin@nat@width
  \fi
}
\definecolor{fgcolor}{rgb}{0.345, 0.345, 0.345}
\definecolor{shadecolor}{rgb}{.97, .97, .97}
\definecolor{messagecolor}{rgb}{0, 0, 0}
\definecolor{warningcolor}{rgb}{1, 0, 1}
\definecolor{errorcolor}{rgb}{1, 0, 0}
\tikzset{every tree node/.style={minimum width=2em,draw,circle},
         blank/.style={draw=none},
         edge from parent/.style=
         {draw,edge from parent path={(\tikzparentnode) -- (\tikzchildnode)}},
         level distance=1.5cm}
\newtheorem{thm}{Theorem}
\newtheorem{deff}{Definition}
\newtheorem{lem}{Lemma}
\newtheorem{prop}{Proposition}
\newtheorem{assumption}{Assumption}
\newtheorem{example}{Example}
\newtheorem{fact}{Fact}
\DeclareMathOperator*{\argmin}{argmin}
\DeclareMathOperator*{\argmax}{argmax}
\newcommand\R{{\sf I\kern-0.1em R}}
\newcommand{\mathsym}[1]{{}}
\newcommand{\tabincell}[2]{\begin{tabular}{@{}1@{}}2\end{tabular}}
\begin{document}
\title{Full characterization of core for nonlinear optimization games}
\author[1]{Donglei Du\thanks{\tt ddu@unb.ca}}
\author[2]{Qizhi Fang\thanks{\tt qfang@ouc.edu.cn}}
\author[2]{Bin Liu\thanks{\tt binliu@ouc.edu.cn}}
\author[2]{Tianhang Lu\thanks{\tt tlu@ouc.edu.cn}}
\author[3]{Chenchen Wu\thanks{\tt wu\_chenchen\_tjut@163.com}}

\affil[1]{Faculty of Management, University of New Brunswick, Fredericton, New Brunswick, Canada, E3B 5A3}
\affil[3]{Department of Mathematics, Ocean University of Qingdao, Qingdao 266003, P. R. China} 
\affil[4]{College of Science, Tianjin University of Technology, Tianjin, P. R. China}

\maketitle

\begin{abstract}
We fully characterize the core of a broad class of nonlinear games by identifying a suitable relaxation for inherent nonlinearity, directly generalizing the linear frameworks in the literature. This characterization significantly expands the scope of cooperative games that can be analyzed and contributes to the literature on games induced from optimization models. We apply these insights to not only establish connections with and provide new insights on classical models but also solve new games untamed in the existing literature, including combinatorial quadratic and ratio games such as portfolio, maximum cut, matching, and assortment games. These results are further extended to more general models and also the approximate core. 
\end{abstract}

\textbf{Key words}: nonlinear cooperative games, combinatorial cooperative games, core, balancedness

\tableofcontents
\section{Introduction}\label{sec:intro}

The central contribution of this work is Theorem~\ref{thm:nonlinear-packing-game} in Section~\ref{subsec:nl-game-main-result}, which fully characterizes the core for the following \emph{nonlinear} optimization game $\left([n]:=\{1,\dots, n\}, \nu_{X, f}\right)$ with $n$ agents, where the characteristic function $\nu_{X, f}$, defined over the Boolean lattice $\mathbb{B}^n:=\{0,1\}^n$, arises as the optimal value of the following nonlinear parametric program:
\begin{eqnarray}
\label{game:packing-subadditive}\mathbb{B}^n\ni w\mapsto \nu_{X, f}(w)&:=&\max\limits_{x\in X} \left[f(x): Ax\le w\right]\in  \mathbb{R},
\end{eqnarray}
where $X\subseteq \mathbb{R}^m_{+}$ is a subset of the positive orthant of the $m$-dimension Euclidean space such that $\{\mathbf{0}_m, \mathbf{e}_1,\dots, \mathbf{e}_m\}\subseteq X$; $A\in \mathbb{B}^{n\times m}$ is a matrix with binary entries such that $A\mathbf{e}_j\ne \mathbf{0}_n, \forall j\in [m]$ (i.e., $A$ has no zero column); and $f: X\mapsto \mathbb{R}$ is a real-valued function with $f\left(\mathbf{0}_m\right)=0$. Functions satisfying the last property will be called \emph{grounded}.

The \emph{core} of the (revenue) game $\nu_{X, f}(w)$ is defined as follows~\citep{gillies1953some,shapley1967balanced}: 
\begin{eqnarray*}
\text{core}\left(\nu_{X, f}\right)&:=&\left\{x\in \mathbb{R}^n: \mathbf{1}_n^Tx=\nu_{X, f}\left(\mathbf{1}_n\right), a^Tx\ge \nu_{X, f}(a), \forall a\in \mathbb{B}^n\right\}. 
\end{eqnarray*}
In the above,  $\mathbf{0}_n$ and $\mathbf{1}_n$ are the $n$-denominational all-zero and all-one vectors, respectively. For cost games, their cores are defined analogously by reversing the inequalities therein~\footnote{In this work, we promote the Boolean vector format over the set format to define concepts such as core. It is the authors' belief that the former provides less mental hurdle to establish more direct and transparent connections to the optimization theory (such as mathematical programming and integer programming) at large.}. 

A core is \emph{fully} characterized if (i) a sufficient and necessary condition for core non-emptiness is available; and moreover, (ii) whenever the core is non-empty, there exists a well-defined dual program whose optimal solutions corresponds bijectively with the core members.   

The well-known Bondareva-Shapley theorem~\citep{bondareva1963some,shapley1967balanced} therefore fully characterizes the core of any game $\mu$. However, games are usually not given in explicit formulas, instead they arise from practical optimization problems, such as (\ref{game:packing-subadditive}), resulting in the so called optimization games. Therefore how the Bondareva-Shapley theorem manifests itself in terms of the defining optimization problems has evolved into a rich and extensive research field with vast applications in economics, operations research, computer science and industry engineering and more recently in machine learning and artificial intelligence. 

While a significant portion of prior research has concentrated on specific optimization game instances using ad hoc methods (which will not be expatiated here; but interested readers are referred to surveys, such as \cite{Curiel1997,borm2001operations,nagarajan2008game,dror2011survey} and references therein, for further information), efforts have also been put into identifying simple yet powerful sufficient conditions or even characterizations of core non-emptiness for broad classes of optimization games, including linear programming games~\citep{owen1975core,granot1986generalized,samet1984core}, linear (binary) combinatorial optimization games~\citep{deng1999algorithmic,faigle2000core,deng2000totally,bilbao2012cooperative}, linear (non-binary) integer programming games~\citep{caprara2010new}, convex programming games~\citep{kalai1982generalized,dubey1984totally}, nonlinear (binary) integer programming games~\citep{liu2016computing}, lattice programming games~\citep{Topkis1998}, conic linear programming games~\citep{puerto2022conic}, and stochastic programming games~\citep{chen2009stochastic,zhang2009cost,uhan2015stochastic,chen2016duality}, among others.

The majority of these systematic studies of optimization games provide only sufficient conditions for core non-emptiness; and whenever available, the dual programs involved do not correspond bijectively with the core. Except for specific games, such as the facility location game~\citep{goemans2004cooperative}, and the matroid base game~\citep{nagamochi1997complexity}, among others, full characterizations of core non-emptiness for broad classes of optimization games are known only for \emph{linear} games, including the linear programming games~\citep{samet1984core}, and the (binary) integer linear programming games~\citep{deng1999algorithmic}. 

This status begs for the pursuit on identifying broad classes of \emph{nonlinear} (continuous or discrete) optimization games whose core non-emptiness can be \emph{fully} characterized as in (\ref{game:packing-subadditive}), subject to a balance between model generality with practical utility easiness.

The motivations for considering the game (\ref{game:packing-subadditive}) are of both practical and theoretical significance due to the introduction of nonlinearity into both the objective function and the constraints.

Practically, nonlinear combinatorial optimization has emerged as a compelling new research area within the field of combinatorial optimization, garnering significant attention. This domain integrates discrete choices with nonlinearity to identify optimal solutions. Consequently, it focuses on optimization problems where the objective function and/or constraints are nonlinear, and the feasible solutions are discrete combinatorial objects. By explicitly introducing nonlinearity, nonlinear combinatorial optimization affords more modelling flexibility and generalizes its linear counterpart, often leading to more intricate problems that demand new solution techniques. Notable examples include submodular optimization, discrete Newton methods, discrete DC program, primal-dual methods with convex relaxation, among others~\citep{murota2003discrete,li2006nonlinear,du2019nonlinear,belotti2013mixed}.
 
Theoretically, a prominent technique in previous \emph{linear} combinatorial game analysis involves relaxing the original integer linear program to its natural linear program relaxation to leverage linear program duality theory. However, for nonlinear games (both continuous and discrete), there is no `natural' relaxation; instead various relaxations are possible. Finding the right relaxation to fully characterize the core of our nonlinear games is one of the main challenges of this work.

While details will be provided in Section~\ref{sec:game:packing-subadditive}, it is worth highlighting the key differences between our techniques and results for our nonlinear games and those of \citep{samet1984core,deng1999algorithmic,faigle2000core,deng2000totally} for the linear games. From a technical standpoint, finding an appropriate relaxation of nonlinear game is non-trivial, unlike the linear case where the natural LP relaxation suffices. One may be tempted to adopt the well-known Lov\'{a}sz extension (for cost game) or multilinear extension (for revenue game) which have been widely used and extremely successful in the design and analysis of both exact and approximation algorithm in submodular optimization. However, this is undesirable for several reasons. Take the revenue game for example. First, for linear games, one implied but key property for full characterization of core non-emptiness is that the relaxed game has a non-empty core, which cannot be guaranteed in the relaxed games when the relaxation is the multilinear extension. Second, it is desirable that relaxed problem should be polynomially solvable so we can find a core member efficiently whenever the core is non-empty. This is impossible via the multilinear extension even the objective function is submodular, because then the relaxed problem involves DR-submodular maximization, an NP-hard problem in general. Third, our assumption on the objective function will be weaker than submodularity, which makes it even more untenable to the multilinear extension. Analogously, these concerns persist for cost games with non-trivial constraints when we use Lov\'{a}sz extension. 

It turns out for the purpose of fully characterizing the non-emptiness of the core of our nonlinear games, a new relaxation is needed (See Section~\ref{subsec:nl-game-main-result} for details.), which also necessitates emphasizing the differences between two concepts, relaxation vs extension, which have mostly been used interchangeably in the past up to this point (See Section~\ref{subsec:relax-vsa-ext} for details.). Moreover, our relaxation is non-oblivious in the sense that it depend on the constraints, in contrast to existing relaxations (such as the Lov\'{a}sz and multilinear extensions) which are oblivious.

The rest of this work is organized as follows, along with further contributions of this work. After some preliminaries in Section~\ref{sec:Preliminaries}, we provide a full characterization (Theorem~\ref{thm:nonlinear-packing-game}) for the core non-emptiness of the game (\ref{game:packing-subadditive}), assuming that the objective set function $f$ is \emph{individually subadditive} as defined in (\ref{eq:ind-sub-rhs}) in Section~\ref{sec:game:packing-subadditive}. 

Then two equivalent characterizations (Theorems~\ref{corr:packing-subadditive-intermediate-sandwich-equi}-\ref{thm:core_nonempty}) are offered by introducing two new games which bound the original game from above and below, both of which have a linear objective function in Section~\ref{subsec:equi-characterization-intermediate-game}. These alternative characterizations provides new quantitative (rather than qualitative) perspective on how our main results and games are related to existing linear games studied in previous literature. We then extend full characterizations to the cores of more general models in Section~\ref{sec:further-results}, including covering games (Theorem~\ref{thm:nonlinear-covering-game}) and partition games (Theorem~\ref{thm:nonlinear-partition-game}) in Section~\ref{sub:cover-partition-games}, games where $\text{cone}(X)$ is finitely generators in Section~\ref{subsec:gene-X-constraint-pack-game} (Theorem~\ref{thm:nonlinear-packing-const-X-generators}),  games with more general constraints matrix (Theorem~\ref{thm:nonlinear-packing-const-dep-w}), nonlinear non-binary integral games (Theorems~\ref{thm:nonlinear-packing-rhs-arb}) and nonlinear binary integral games where the objective function also depends on the agents (Theorems~\ref{thm:nonlinear-packing-game-obj-constraint}), and approximate core (Theorem~\ref{thm:appro-core}) in Section~\ref{subsec:appro-core}. We demonstrate the utility of our results in Section~\ref{sec:apps} by analyzing nonlinear versions of some of the classical linear games. This work also puts the concept of individually subadditiveness on the spotlight. To enable the utility of our model, it is critical to identify when a  function is individually subadditive. For this purpose, in Section~\ref{sec:subadd-operations-preservation}, we also investigate common operations that preserve individually subadditiveness.  Open questions worth further investigating are offered in Section~\ref{sec:conclusion}.

\section{Preliminaries and notations}\label{sec:Preliminaries}

Denote $\mathbb{R}^n$ as the $n$-dimension Euclidean space. Denote $\mathbb{R}^n_{+}$ and $\mathbb{R}^n_{++}$ as the non-negative and positive vectors in $\mathbb{R}^n$, respectively. Denote the join $x\vee y$ and meet $x\wedge y$ of two vectors $x,y\in \mathbb{R}^n$ as the coordinate-wise maximum and minimum, respectively. Hence $x^{+}:=\max\{x, 0\}=x\vee 0, \forall x\in\mathbb{R}$. For any $x\in \mathbb{R}^n$, denote the support set of $x$ as $\text{supp}(x)=\{i\in [n]: x_i\ne 0\}$. 

We use $A\odot B$ for the Hadamard product between any two matrices of the same dimension. We use $A\otimes B$ for the Kronecker product between any two matrices. For any positive integer $n\in\mathbb{N}$, we denote the ordered set $[n]:=\{1, \dots, n\}$. Denote $I_n\in\mathbb{R}^{n\times n}$ as the identify matrix. For any vector $b\in\mathbb{R}^n$, denote $\text{diag} (b):=(b_1, \dots, b_n)\in \mathbb{R}^{n\times n}$ as the diagonal matrix. 

Denote $\mathbb{B}^n$ as the $n$-dimension Boolean space. A binary vector $w\in \mathbb{B}^n$ is an indicator function of the set $S\subseteq [n]$ such that $w_i=1$ whenever $i\in S$. We use $\mathbf{0}_n, \mathbf{1}_n\in \mathbb{B}^n$ to represent the vectors with all zero's and all ones, respectively. For any $i\in [n], \mathbf{e}_i\in \mathbb{B}^n$ is the unit vector with all zeros except a one on the $i$-th coordinate. 

For any set $C\subseteq \mathbb{R}^n$, the \emph{indicator} function $\delta_C$ of $C$ is defined as $\delta_C(x)=0$, if $x\in C$ and $\delta_C(x)=+\infty$, if $x\notin C$. 

For function representation, we use two forms interchangeably, e.g., $f: \mathbb{R}^n\mapsto \mathbb{R}$ and $\mathbb{R}^n\ni x\mapsto f(x)\in \mathbb{R}$ both indicate a function $f$ whose domain and co-domain are respectively $\mathbb{R}^n$ and $\mathbb{R}$.

\section{Nonlinear games}\label{sec:game:packing-subadditive}

Under the assumption $A\in \mathbb{B}^{n\times m}$ and $f\left(\mathbf{0}_m\right)=0$, the nonlinear game $\nu_{X, f}$ in (\ref{game:packing-subadditive}) is evidently non-negative ($\nu_{X, f}\ge 0$), grounded ($\nu_{X, f}\left(\mathbf{0}_m\right)=0$), and monotone ($\forall w, u\in\mathbb{B}^n: w\ge u\implies \nu_{X, f}(w)\ge \nu_{X, f}(u)$). 

Note that our nonlinear game $\nu_{X, f}$ in (\ref{game:packing-subadditive}) is special case of the one in \citep[Equation (2)]{liu2016computing}, although their focus is in computing numerically good cost shares for these games, different from fully characterizing core non-emptiness, which is our main focus. They also claimed that $\nu_{X, f}$ is superadditive, whose proof is only valid for linear non-binary integer integer program, but for neither nonlinear integer program nor linear binary integer program. Therefore, it is still an open question to characterize the superadditiveness of $\nu_{X, f}$.

Our primary focus is to fully characterize core non-emptiness. To understand and present the characterization in Section~\ref{subsec:nl-game-main-result}, we first differentiate between \emph{relaxation} and \emph{extension}, two terms previously used interchangeably. We argue for a distinction between these terms to clarify their different bearings in Section~\ref{subsec:relax-vsa-ext}. This distinction is crucial for explaining the key differences between our techniques and results and those found in prior work, such as \citep{samet1984core,deng1999algorithmic}.

\subsection{Relaxation vs extension}\label{subsec:relax-vsa-ext}

As explained briefly in the introduction, when \( f(x) \) is nonlinear, the primary challenge lies in devising a relaxation for the nonlinear game~(\ref{game:packing-subadditive}).

\begin{deff}\label{def:relaxation-extension} (\textbf{Relaxation and extension})
Given a function $f: X_1\mapsto Y$, construct another function $F: X_2\mapsto Y$ by enlarging the domain from $X_1$ to $X_2\supseteq X_1$. Then 
\begin{enumerate}[(i)]
\item $F$ is a \textbf{relaxation} of $f$ if  
\begin{enumerate}[(a)]
\item either 
\[
 \forall x\in X_1: f(x)\le F(x)
\]
\item or
\[
\forall x\in X_1, f(x)\ge F(x)
\]
\end{enumerate}
\item $F$ is an \textbf{extension} if 
\[
\forall x\in X_1: F(x)=f(x)
\]
\end{enumerate}
\end{deff}
For simplicity, whenever there is no confusion, we use $F|_{X_1}\le f$ or $F|_{X_1}\ge f$ to represent relaxation, and $F|_{X_1}= f$ to represent extension. Therefore, extension is a stricter version of relaxation.

Based on these definitions, the standard convex relaxations in the literature, such as LP (linear programming) relaxation will be referred to as LP extension. This terminology would better align with established conventions in other mathematical fields, such as the Hahn-Banach Extension Theorem, the Kolmogorov Extension Theorem, and Urysohn’s Lemma in functional analysis; the Lov\'{a}sz extension and multilinear extension in submodular optimization; and the Carath\'{e}odory extension of a measure in measure theory, among others. Indeed, the concept of extension as a general principle and technique is ubiquitous across nearly all areas of mathematics, and for compelling reasons. Primarily, an extension preserves the properties inherited by a subset from the extended set, retaining their validity within the original subset. This is an exceptionally powerful idea in problem-solving; namely, addressing a more general problem that encompasses the original problem as a special case. 

We illustrate the different consequences between relaxation and extension in the context of optimization, formalized in the following straightforward fact (along with a simple proof in Appendix A for completeness). 
We consider maximization problems here, where relaxation means Definition~\ref{def:relaxation-extension}(i)(a). Analogously, Definition~\ref{def:relaxation-extension}(i)(b) should be used instead for minimization problems.
\begin{prop}\label{prop:relax-exten}
Given two subsets $X_1, X_2\subseteq \mathbb{R}^n$ such that $X_1\subseteq X_2$, let $F:X_2\mapsto \mathbb{R}$ be a relaxation of $f:X_1\mapsto \mathbb{R}$ such that $F|_{X_1}\ge f$. Consider the following two optimization problems, where the second is a relaxation of the first:
\begin{eqnarray*}
\max\limits_{x\in X_1} f(x)\le \max\limits_{y\in X_2} F(y).
\end{eqnarray*}
Let $x^*\in \argmax\limits_{x\in X_1} f(x)$ and $y^*\in\argmax\limits_{y\in X_2} F(y)$. 
\begin{enumerate}[(i)]
\item When $F$ is a \textbf{relaxation}, then 
\begin{enumerate}[(a)]
\item $f(x^*)=F(y^*)$ implies that $x^*\in\argmax\limits_{y\in X_2} F(y)$.
\item If $y^* \in X_1$, then  $F(y^*)\ge F(x^*)\ge f(x^*)\ge f(y^*)$, and hence it may happen that $y^*\notin \argmax\limits_{x\in X_1} f(x)$.
\item If $f(x^*)=F(y^*)$ and $y^* \in X_1$, then $F(y^*)=F(x^*)=f(x^*)=f(y^*)$, and hence $y^* \in\argmax\limits_{x\in X_1} f(x)$ and $x^*\in\argmax\limits_{y\in X_2} F(y)$.
\end{enumerate}
\item When $F$ is an \textbf{extension} of $f$, then 
\begin{enumerate}[(a)]
\item $f(x^*)=F(y^*)$ if and only if $x^*\in\argmax\limits_{y\in X_2} F(y)$.
\item If $y^* \in X_1$, then $F(y^*)=F(x^*)=f(x^*)=f(y^*)$, and hence $y^* \in\argmax\limits_{x\in X_1} f(x)$ and $x^*\in\argmax\limits_{y\in X_2} F(y)$.
\end{enumerate}
\end{enumerate}
\end{prop}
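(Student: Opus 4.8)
The plan is to unwind each clause directly from the two defining inequalities $\max_{x\in X_1}f(x)\le\max_{y\in X_2}F(y)$ and $F|_{X_1}\ge f$, so the whole argument is a sequence of short chains of inequalities with essentially no computation; the only real ``idea'' is to be careful about which direction each implication goes and to notice that an extension is the special case where $F|_{X_1}\ge f$ is tightened to $F|_{X_1}=f$.

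For part (i)(a): assume $f(x^*)=F(y^*)$. Since $x^*\in X_1\subseteq X_2$ we have $F(x^*)\ge F(y^*)$ is what we want, and indeed $F(x^*)\ge f(x^*)=F(y^*)=\max_{y\in X_2}F(y)$, where the first step is $F|_{X_1}\ge f$; combined with $F(x^*)\le\max_{y\in X_2}F(y)$ (since $x^*\in X_2$) this forces $F(x^*)=\max_{y\in X_2}F(y)$, i.e. $x^*\in\argmax_{y\in X_2}F(y)$. For (i)(b): if $y^*\in X_1$, then $F(y^*)\ge F(x^*)$ because $y^*$ maximizes $F$ over $X_2\supseteq X_1\ni x^*$; $F(x^*)\ge f(x^*)$ by $F|_{X_1}\ge f$; $f(x^*)\ge f(y^*)$ because $x^*$ maximizes $f$ over $X_1$ and $y^*\in X_1$. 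The resulting chain $F(y^*)\ge F(x^*)\ge f(x^*)\ge f(y^*)$ leaves room for $f(y^*)<f(x^*)$, which is exactly the statement that $y^*$ need not lie in $\argmax_{x\in X_1}f(x)$; I will either give a one-line pointer to a trivial example (e.g. $F$ strictly above $f$ somewhere) or simply observe that no further inequality closes the gap. For (i)(c): add the hypothesis $f(x^*)=F(y^*)$ to the chain in (b); then $F(y^*)\ge F(x^*)\ge f(x^*)\ge f(y^*)$ together with $f(y^*)\le f(x^*)=F(y^*)$ collapses the whole chain to equalities $F(y^*)=F(x^*)=f(x^*)=f(y^*)$, whence $y^*\in\argmax_{x\in X_1}f(x)$ (as $f(y^*)=f(x^*)$) and $x^*\in\argmax_{y\in X_2}F(y)$ (as $F(x^*)=F(y^*)$, or by part (i)(a)).

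For part (ii), where $F|_{X_1}=f$: (ii)(a) the ``if'' direction is (i)(a) verbatim, using only $F|_{X_1}\ge f$; for ``only if'', suppose $x^*\in\argmax_{y\in X_2}F(y)$, so $F(x^*)=\max_{y\in X_2}F(y)\ge\max_{x\in X_1}f(x)=f(x^*)$, and since $x^*\in X_1$ the extension property gives $F(x^*)=f(x^*)$, hence $f(x^*)=\max_{y\in X_2}F(y)=F(y^*)$. For (ii)(b): with $y^*\in X_1$ the extension gives $F(y^*)=f(y^*)$, and then $f(y^*)\le f(x^*)$ (optimality of $x^*$ over $X_1$) while $F(y^*)=\max_{y\in X_2}F(y)\ge F(x^*)=f(x^*)$, so $f(x^*)\le F(y^*)=f(y^*)\le f(x^*)$, forcing all four quantities equal; the two conclusions follow as before.

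I do not anticipate a genuine obstacle here — the statement is a catalogue of easy consequences — so the ``hard part'' is purely expository: keeping the directions of the inequalities straight, being explicit about which membership ($x^*\in X_2$, $y^*\in X_1$) licenses each comparison, and making clear in (i)(b) that the failure of $y^*\in\argmax_{x\in X_1}f(x)$ is a genuine possibility rather than something one should try to prove away. Since the paper defers this to ``Appendix A for completeness,'' I would write it out as the short inequality chains above with a sentence of commentary per item, and leave the illustrative counterexample for (i)(b) to a single clause.
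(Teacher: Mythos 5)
Your treatment of (i)(a), (i)(b), (ii)(a) and (ii)(b) is correct and follows essentially the same route as the paper's Appendix A: the chain $F(y^*)\ge F(x^*)\ge f(x^*)$, supplemented when $y^*\in X_1$ by $f(x^*)\ge f(y^*)$, with the extension hypothesis invoked only for the ``only if'' half of (ii)(a) and for (ii)(b); deferring the ``may happen'' clause of (i)(b) to a one-line example is also what the paper does (it points to Example~\ref{exp:int-imlies-no-non-empty-core}).

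The genuine gap is in (i)(c). Your claim that $f(x^*)=F(y^*)$ together with $f(y^*)\le f(x^*)=F(y^*)$ ``collapses the whole chain to equalities'' is a non sequitur: these facts do force $F(y^*)=F(x^*)=f(x^*)$ (and hence $x^*\in\argmax_{y\in X_2}F(y)$), but $f(y^*)\le f(x^*)$ is only an upper bound on $f(y^*)$, and under a mere relaxation nothing supplies the reverse inequality $f(y^*)\ge f(x^*)$ --- that would require $f(y^*)=F(y^*)$, which is exactly what the extension hypothesis, and only it, provides. In fact (i)(c) as stated fails: take $X_1=X_2=\{0,1\}\subseteq\mathbb{R}$, $f(0)=1$, $f(1)=0$, $F(0)=F(1)=1$; then $F|_{X_1}\ge f$, $x^*=0$, and $y^*=1$ is a legitimate maximizer of $F$ with $f(x^*)=F(y^*)$ and $y^*\in X_1$, yet $f(y^*)=0\ne f(x^*)$ and $y^*\notin\argmax_{x\in X_1}f(x)$. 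To be fair, the paper's own proof shares this defect: the displayed step (\ref{eq:relax-relax-2}) silently asserts $f(y^*)=F(y^*)$, which is unjustified for a relaxation, so the flaw lies in the statement of (i)(c) rather than in your overall strategy. The honest repair is either to add the extension hypothesis to (i)(c), which makes it a duplicate of (ii)(b), or to weaken its conclusion to $F(y^*)=F(x^*)=f(x^*)$, keeping $x^*\in\argmax_{y\in X_2}F(y)$ but dropping $f(y^*)$ and the claim $y^*\in\argmax_{x\in X_1}f(x)$.
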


In view of Fact~\ref{prop:relax-exten}, a natural question arises: \emph{Is mere relaxation ever truly necessary and useful in its own right?} Surprisingly, as demonstrated in this work, we do require the strictly weaker notion of relaxation rather than extension to characterize the non-emptiness of the cores of the nonlinear game (\ref{game:packing-subadditive}) that we are now ready to explore.

\subsection{Full characterization of core}\label{subsec:nl-game-main-result}

For the game (\ref{game:packing-subadditive}), we impose the following assumptions: 
\begin{enumerate}[(a)]
\item $A\in \mathbb{B}^{n\times m}$ is a matrix with binary entries such that $A\mathbf{e}_j\ne \mathbf{0}_n, \forall j\in [m]$ (i.e., $A$ has no zero column);
\item $X\subseteq \mathbb{R}^m_{+}$ is a subset of the positive orthant of the $m$-dimension Euclidean space such that $\{\mathbf{0}_m, \mathbf{e}_1,\dots, \mathbf{e}_m\}\subseteq X$. 
\item $f: X\mapsto \mathbb{R}$ is a real-valued function that is \emph{grounded}, namely, $f\left(\mathbf{0}_m\right)=0$; and \emph{individually subadditive}, namely,  
\begin{equation}\label{eq:ind-sub-rhs}
f(x)\le (f(\mathbf{e}_1), \dots, f(\mathbf{e}_m))x, \forall x\in X.
\end{equation}
\qed
\end{enumerate}

While the first assumption (a) is a technical one to guarantee the maximum in (\ref{game:packing-subadditive}) is attained with a finite value, the last two assumptions need more explanations.

For (b), the set $X$ decides the modeling capacity of our model. One noteworthy feature of our model is that we are able to treat broad classes of (continuous, or discrete or mixed) nonlinear games in a unified way afforded by the flexibility of choosing different $X$, which can be linear or nonlinear. For example, $X=\mathbb{R}^m_{+}$ represents continuous games, such as those in \citep{samet1984core}; $X=\mathbb{B}^m$ represents integer (combinatorial) games, such as those in \citep{deng1999algorithmic}; $X=\mathbb{R}^k\times \mathbb{B}^{\ell}$ with $k+\ell=m$ represents mixed games; $X=\{x\in \mathbb{B}^m: \mathbf{1}_m^Tx\le k\}$, where $k\in \mathbb{N}$, represents games with a cardinality constraint; more generally, $X=\{x\in \mathbb{B}^m: Qx\le d\}$, where $Q\in \mathbb{R}^{\ell\times m}_{+}, d\in \mathbb{R}^{\ell}_{+}$ such that $Q\mathbf{e}_i\le d, \forall i\in [m]$, represents games with $\ell\in\mathbb{N}$ knapsack constraints; and even nonlinear constraints such as  $X=\{x\in \mathbb{B}^m: x^TQx\le 0\}$, where $Q\in \mathbb{R}^{\ell\times m}_{+}$ satisfies $\mathbf{e}_i^TQ\mathbf{e}_i\le 0, \forall i\in [m]$, among others. Therefore, our models and results advance existing literature by incorporating nonlinearity into both the objective function and the constraint. 

Note further that under the assumption that $\{\mathbf{0}_m, \mathbf{e}_1,\dots, \mathbf{e}_m\}\subseteq X$, we have the convex conic hull $\text{cone} (X)=\mathbb{R}^m_{+}$ where the unit vectors serve as a set of finite generators. This assumption will be relaxed to the more general setting where $\text{cone} (X)$ is generated by any finite set of generators (or equivalently a polyhedron cone) in Section~\ref{subsec:gene-X-constraint-pack-game}, which includes the facility location game in \citep{goemans2004cooperative} as a special case. 

For (c), the class of individually subadditive functions in (c) is broad. Evidently linear function is a special case. As another example, when $X=\mathbb{B}^m$, all grounded subadditive set functions are individually subadditive, and hence all grounded submodular set functions, as a subset of the grounded subadditive set functions, are individually subadditive. Section~\ref{sec:relationships} provides examples to illustrate that the inclusions are proper. In particular a quadratic set function $x^TQx$ is individually subadditive if and only if it is submodular, and the ratio of two monotone modular functions $\frac{c^Tx}{d_0+d^Tx}, c, d, d_0\ge 0$ is individually subadditive, as will be shown in Section~\ref{sec:subadd-operations-preservation} along with more examples. However, the class of individually subadditive functions heavily depend on the domain. For example when $X=[0,1]^m$, submodular function and individually subadditive funciton are independent concepts as will be also discussed in Section~\ref{sec:subadd-operations-preservation}.  These will allow us to handle nonlinear continuous and combinatorial games beyond linear objective function, such as  the aforementioned quadratic and ratio objective functions.

Analogously, a function $f$ is \emph{individually superadditive} if and only if $-f$ is \emph{individually subadditive}. Evidently linear functions are both \emph{individually subadditive} and \emph{individually superadditive}, and hence can also be equivalently called \emph{individually additive} or just additive.

Individual supadditiveness or individual subadditiveness is not a new concept per se, which has been often assumed as a prerequisite property for characteristic functions of cooperative games in the prior literature. However, our usage here is different. We are not assuming the (revenue) game $\nu_{X,f}$ is individually subadditive or superadditive (whose characterization is an open question); rather we assume the objective function $f$ in the parametric optimization model is individually subadditive, under which, we are seeking to characterize the core non-emptiness of the induced game $\nu_{X,f}$. This actually opens up some open questions worth further exploring: (i) characterize the individual supadditiveness or the stronger supadditiveness of the revenue game $\nu_{X,f}$, under appropriate property of $f$; and (ii) characterize the core non-emptiness of the revenue game $\nu_{X,f}$, when $f$ is not individually subadditive, such as individually supadditive instead. 

The right-hand-side linear function of (\ref{eq:ind-sub-rhs}) can be viewed as a relaxation (although not an extension in general as exemplified after the following definition), which is well-defined for any function $f: X\mapsto \mathbb{R}$, not just individually subadditive functions. We call this relaxation the \emph{basis-linear} relaxation.  
\begin{deff}\label{deff:basis-linear-relaxation} (\textbf{Basis-linear relaxation}) Given a function $f: X\mapsto \mathbb{R}$ with $f\left(\mathbf{0}_m\right)=0$, where $\left\{\mathbf{0}_m, \mathbf{e}_1,\dots, \mathbf{e}_m\right\}\subseteq X\subseteq \mathbb{R}^m_{+}$, its basis-linear relaxation $F$ is given as follows:
\[
\mathbb{R}^m_{+}\ni x\mapsto F(x):=(f(\mathbf{e}_1),\dots, f(\mathbf{e}_m))x\in\mathbb{R}
\]
\end{deff}
Evidently, we have $F\left(\mathbf{e}_i\right) =f\left(\mathbf{e}_i\right), \forall i\in [m]$. However, we want to emphasize that this basis-linear relaxation is merely a relaxation of $f: X\mapsto \mathbb{R}$ rather than an extension because in general we only have $F\left(\mathbf{e}_i\right) =f\left(\mathbf{e}_i\right), \forall i\in [m]$ and $F(x)|_X\gneq f(x)$. As a quick example of this fact, given $X=\mathbb{B}^2\ni (x_1, x_2)\mapsto f(x_1, x_2)=x_1+x_2-x_1x_2\mapsto \mathbb{R}$, then $f$ is individually subadditive, but its basis-linear relaxation function $\mathbb{R}^2\ni x\mapsto F(x_1, x_2)=x_1+x_2\in\mathbb{R}$ is merely a relaxation and not an extension because $F(1,1)=2>1=f(1,1)$. 

With the basis-linear relaxation, we introduce the following relaxed linear game along with its dual:
\begin{eqnarray}
\label{eq:NLP-relax-primal}
\mathbb{B}^n\ni w\mapsto \nu_{\mathbb{R}^m_{+}, F}(w)&:=&\max\limits_{x\in \mathbb{R}^m_{+}} \left[F(x): Ax\le w\right] \\
\label{eq:NLP-relax-dual}&\overset{\text{LP duality}}{=}&\min\limits_{y\in \mathbb{R}^n_{+}} \left[w^Ty: y^TA\ge (f(\mathbf{e}_1),\dots, f(\mathbf{e}_m))\right]\in\mathbb{R}
\end{eqnarray}

Now we are ready to present the main result of this work. 
\begin{thm}\label{thm:nonlinear-packing-game}
For the nonlinear game (\ref{game:packing-subadditive}) under assumptions (a-c), we have
\begin{enumerate}[(i)]
\item The core of the game $\nu_{X, f}$ is non-empty if and only if $\nu_{X, f}\left(\mathbf{1}_n\right)=\nu_{\mathbb{R}^m_{+}, F}\left(\mathbf{1}_n\right)$ as in (\ref{game:packing-subadditive}) and (\ref{eq:NLP-relax-primal}), respectively, by choosing $w=\mathbf{1}_n$.
\item The core of $\nu_{X, f}$, whenever non-empty, coincides with the set of optimal solutions of the dual LP as in (\ref{eq:NLP-relax-dual}) by choosing $w=\mathbf{1}_n$; namely
 \[
 \text{core} \left(\nu_{X, f}\right)=\argmin\limits_{y\in \mathbb{R}^n_{+}} \left[\mathbf{1}_n^Ty: y^TA\ge (f(\mathbf{e}_1),\dots, f(\mathbf{e}_m))\right]
 \]
\item For nonlinear combinatorial game where $X=\mathbb{B}^m$, whenever the core is non-empty, the LP, as in (\ref{eq:NLP-relax-primal}) by choosing $w=\mathbf{1}_n$, has an integer optimal solution. 
\end{enumerate}
\end{thm}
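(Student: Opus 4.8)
The plan is to establish all three parts at once by exhibiting a chain of inequalities relating the original game, its basis-linear relaxation, and the relevant LP/dual optima, and then showing that core non-emptiness is exactly the condition that forces this chain to collapse at the coordinate $w=\mathbf{1}_n$. First I would record the elementary domination $\nu_{X,f}(w)\le \nu_{\mathbb{R}^m_+,F}(w)$ for every $w\in\mathbb{B}^n$: this is immediate from Definition~\ref{deff:basis-linear-relaxation} together with the individual subadditivity assumption (c), since $f(x)\le F(x)$ on $X$ and the feasible region of (\ref{game:packing-subadditive}) sits inside that of (\ref{eq:NLP-relax-primal}); it is the relaxation inequality in the sense of Proposition~\ref{prop:relax-exten}(i). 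Next I would note the reverse-type lower bound coming from the unit vectors: because $\mathbf{e}_j\in X$ and $A\mathbf{e}_j\le \mathbf{1}_n$ (entries of $A$ are binary), plugging single scaled unit vectors into (\ref{game:packing-subadditive}) shows $\nu_{X,f}$ already ``sees'' the coefficients $f(\mathbf{e}_j)$, and more importantly the LP (\ref{eq:NLP-relax-primal}) at $w=\mathbf{1}_n$ has a bounded optimum (assumption (a), no zero column, guarantees the dual (\ref{eq:NLP-relax-dual}) is feasible, so by LP duality both optima are finite and attained).

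For part (ii), I would argue that the dual feasible set $\{y\ge 0: y^TA\ge (f(\mathbf{e}_1),\dots,f(\mathbf{e}_m))\}$ is \emph{exactly} the core inequality system. The budget-balance equality $\mathbf{1}_n^Tx=\nu_{X,f}(\mathbf{1}_n)$ pairs with optimality of the dual objective; the coalitional inequalities $a^Tx\ge \nu_{X,f}(a)$ for all $a\in\mathbb{B}^n$ need to be shown equivalent, over dual-feasible $y$, to the finitely many inequalities $y^TA\mathbf{e}_j\ge f(\mathbf{e}_j)$. One direction: a core vector $x$ satisfies $a^Tx\ge \nu_{X,f}(a)\ge f(\mathbf{e}_j)$ for $a=A\mathbf{e}_j$ (valid since $\mathbf{e}_j\in X$ and $A\mathbf{e}_j$ is a legitimate right-hand side dominating $A\mathbf{e}_j$), giving dual feasibility; and $\mathbf{1}_n^Tx=\nu_{X,f}(\mathbf{1}_n)$, which under part (i) equals $\nu_{\mathbb{R}^m_+,F}(\mathbf{1}_n)$, the common LP value, so $x$ is dual-optimal. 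Conversely, a dual-optimal $y$ has $\mathbf{1}_n^Ty$ equal to the LP value, which under part (i) equals $\nu_{X,f}(\mathbf{1}_n)$; and for any $a\in\mathbb{B}^n$, if $x^\star\in X$ attains $\nu_{X,f}(a)$ with $Ax^\star\le a$, then $a^Ty\ge (Ax^\star)^Ty=(y^TA)x^\star\ge (f(\mathbf{e}_1),\dots,f(\mathbf{e}_m))x^\star\ge f(x^\star)=\nu_{X,f}(a)$, the last inequality again being individual subadditivity. So dual-optimal $=$ core, conditional on (i).

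For part (i): the ``if'' direction falls out of part (ii)'s constructions, since when $\nu_{X,f}(\mathbf{1}_n)=\nu_{\mathbb{R}^m_+,F}(\mathbf{1}_n)$ the dual LP is a bona fide program whose (nonempty, as the LP value is finite) optimal set lands in the core. The ``only if'' direction is where the real work is and I expect it to be the main obstacle: given a core member $x$, I must deduce the equality of values. The idea is that $x$ is dual-feasible (shown above), hence $\mathbf{1}_n^Tx\ge$ the dual optimum $=\nu_{\mathbb{R}^m_+,F}(\mathbf{1}_n)\ge \nu_{X,f}(\mathbf{1}_n)=\mathbf{1}_n^Tx$, forcing equality throughout. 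The delicate point is making sure every step is legitimate: that $A\mathbf{e}_j$ as a coalition gives precisely $\nu_{X,f}(A\mathbf{e}_j)\ge f(\mathbf{e}_j)$ and that dual feasibility really does reduce to these $m$ inequalities rather than something weaker; this uses the binary structure of $A$ and the assumption $\{\mathbf{0}_m,\mathbf{e}_1,\dots,\mathbf{e}_m\}\subseteq X$ crucially. Finally, part (iii): when $X=\mathbb{B}^m$, I would show the LP (\ref{eq:NLP-relax-primal}) at $w=\mathbf{1}_n$ attains its optimum at an integral point. The cleanest route is to observe that $F$ is linear with nonnegative coefficients $f(\mathbf{e}_j)$ (nonnegativity of $f(\mathbf{e}_j)$ follows from individual subadditivity together with groundedness applied at $\mathbf{e}_j$, or from monotonicity of $\nu_{X,f}$), so maximizing over the box-truncated polytope $\{x\ge 0: Ax\le \mathbf{1}_n\}$, and then arguing that since $A$ is binary with no zero column one can round an optimal fractional $x$ coordinate-by-coordinate to $\{0,1\}$ without decreasing $F(x)$ or violating $Ax\le\mathbf{1}_n$; alternatively, invoke that by part (i) the optimal LP value equals $\nu_{X,f}(\mathbf{1}_n)$, which is attained by some $x\in\mathbb{B}^m$ feasible for the LP, and that point is the desired integer optimum. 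I would present the second argument as primary since it is shortest and reuses part (i) directly.
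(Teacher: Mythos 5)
Your proposal is correct and follows essentially the same route as the paper's proof: both reduce the infinitely/exponentially many coalitional inequalities to the finitely many dual constraints via the coalition $a=A\mathbf{e}_j$ with the feasible point $x=\mathbf{e}_j$ in one direction and the chain $a^Ty\ge y^TAx\ge (f(\mathbf{e}_1),\dots,f(\mathbf{e}_m))x=F(x)\ge f(x)$ in the other, then invoke LP duality to get (i)--(ii) and the relaxation observation (Proposition~\ref{prop:relax-exten}(i)(a)) to get (iii). One small caveat: your alternative coordinate-rounding argument for (iii) would prove integrality of the LP \emph{unconditionally}, which is false in general (e.g., matching-type constraints on odd cycles), but the argument you designate as primary is exactly the paper's and is sound; also note that dual feasibility of a core member tacitly uses its nonnegativity, which follows from the singleton-coalition inequalities and $\nu_{X,f}\ge 0$.
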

\begin{proof} 
We first show the following two sets are the same. 
\begin{eqnarray*}
\Omega_1&:=&\left\{y\in\mathbb{R}^n_{+}: a^Ty\ge \nu_{X, f}(a), \forall a\in\mathbb{B}^n\right\}\\
&=&\left\{y\in\mathbb{R}^n_{+}: a^Ty\ge f(x), \forall x\in \{x\in X: Ax\le a\}, \forall a\in \mathbb{B}^n\right\}\\
\Omega_2&:=&\left\{y\in\mathbb{R}^n_{+}: y^TA\ge (f(\mathbf{e}_1),\dots, f(\mathbf{e}_m))\right\}
\end{eqnarray*}

First note that both sets are unbounded non-empty closed convex polyhedrons due to the assumptions in the model. Because $f$ is real-valued function, large enough $y$ will guarantee $\Omega_1, \Omega_2\ne \emptyset$, where $\Omega_1 \ne \emptyset$ is due to the assumption that $f\left(\mathbf{0}_m\right)=0$, and $\Omega_2 \ne \emptyset$ is due to the assumption that $A\mathbf{e}_j\ne \mathbf{0}_n, \forall j\in [m]$ (otherwise, if $\exists j\in [m]: A\mathbf{e}_j=\mathbf{0}_n$, then in the program (\ref{game:packing-subadditive}), no constraints are imposed on $x_j$ corresponding with the zero column $j$, and hence can be chosen to make the maximization problem unbounded, implying infeasible primal, a contradiction to our model assumption). Another point worth mentioning is that, for general $X$, the number of linear constraints in $\Omega_1$ may be infinite. However we do not need to invoke the toolkit from semi-infinite programming theory because under our modeling setting, we will show that $\Omega_1=\Omega_2$ while the latter is described with only finite number of linear constraints; namely the semi-infinite $\Omega_1$ is reducible to $\Omega_2$ (See ~\citep[Section 3; Definition 3.1]{shapiro2009semi} for the definition of reducibility).

For one direction, choosing $a=A\mathbf{e}_j\in \mathbb{B}^n, j\in [m]$ implies that $x=\mathbf{e}_j\in \{x\in X: Ax\le a=A\mathbf{e}_j\}$ because $\{\mathbf{0}_m, \mathbf{e}_1,\dots, \mathbf{e}_m\}\subseteq X\subseteq \mathbb{R}^m_{+}$.
\begin{eqnarray*}
y\in \Omega_1&\overset{a=A\mathbf{e}_j, x=\mathbf{e}_j, j\in [m]}{\implies}& \left[ (A\mathbf{e}_j)^Ty\ge f(\mathbf{e}_j), j\in [m], y\ge 0 \right]\\
&\iff& \left[y^TA\ge (f(\mathbf{e}_1),\dots, f(\mathbf{e}_m)), y\ge 0\right] \implies y\in \Omega_2
\end{eqnarray*}
For the other direction, $\forall a\in \mathbb{B}^n, \forall x\in \{x\in X: Ax\le a\}$:
\begin{eqnarray*}
y\in \Omega_2&\implies& \left[a^Ty\overset{y\ge 0}{\ge} y^TAx\overset{x\ge 0}{\ge} (f(\mathbf{e}_1),\dots, f(\mathbf{e}_m))x=F(x) \ge f(x)\right]\\
&\implies& y\in \Omega_1,
\end{eqnarray*}
where we used the fact that $y\ge 0$ due to the monotonicity of the game $\nu$ (However, the nonnegativity of $y$ is not needed for partition game due to $Ax=w$ as in Theorem~\ref{thm:nonlinear-partition-game}). 

Then we have the following relationships:  
\begin{eqnarray*}
\nonumber \nu_{X, f}\left(\mathbf{1}_n\right)&\le &\min\limits_{y\in\mathbb{R}^{n}_{+}}\left\{\mathbf{1}_n^Ty: a^Ty\ge \nu_{X, f}(a), \forall a\in\mathbb{B}^n\right\}\\
\nonumber&=&\min\limits_{y\in\mathbb{R}^{n}}\left\{\mathbf{1}_n^Ty: y\in \Omega_1\right\}\overset{\Omega_1=\Omega_2}{=}\min\limits_{y\in\mathbb{R}^{n}} \left[\mathbf{1}_n^Ty: y\in \Omega_2\right]\\
\nonumber&\overset{\text{LP dual}}{=}&\max\limits_{x\in \mathbb{R}^m_{+}} \left[(f(\mathbf{e}_1),\dots, f(\mathbf{e}_m))x: Ax\le \mathbf{1}_n\right]\overset{(\ref{eq:NLP-relax-primal})}{:=}\nu_{\mathbb{R}^m_{+}, F}\left(\mathbf{1}_n\right),
\end{eqnarray*}

So the core of $\nu_{X, f}$ is nonempty if and only if $\nu_{X, f}\left(\mathbf{1}_n\right)=\nu_{\mathbb{R}^m_{+}, F}\left(\mathbf{1}_n\right)$. This proves (i). Moreover, $\Omega_1=\Omega_2$ implies (ii). Proposition~\ref{prop:relax-exten}(i)(a) implies (iii).  
\end{proof}

\subsection{Immediate applications}\label{subsec:immediate-applications}
To put our research into further perspective and also as immediate applications of our results, we highlight special cases of Theorem~\ref{thm:nonlinear-packing-game} from the literature; offer some immediate new results beyond existing results; and point out key distinctions between the linear games from the literature and our nonlinear games. 
\begin{itemize}
\item \citep[Theorem 4]{samet1984core} is a special case of Theorem~\ref{thm:nonlinear-packing-game} by fixing the continuous domain $X=\mathbb{R}^m_{+}$, namely, the linear programming game $\nu_{\mathbb{R}^m_{+},c^Tx}$, whose core is always non-empty because the condition (i) therein trivially holds (this is the point we raised earlier about the non-emptiness of the relaxed game, which is critical for all analysis to carry through for linear games), and what \citep[Theorem 4]{samet1984core} showed is (ii), namely, the core, whenever non-empty, can be fully characterized by the dual LP. 
\item On the other hand, \citep[Theorem 1]{deng1999algorithmic} is a special case of Theorem~\ref{thm:nonlinear-packing-game} by fixing the discrete domain $X=\mathbb{B}^m$, namely, the linear combinatorial game $\nu_{\mathbb{B}^m, c^Tx}$, whose core may be empty and what \citep[Theorem 1]{deng1999algorithmic} showed is both (i) and (ii) therein. Moreover, in this special case, (i) is also equivalent to the condition that the LP relaxation has an integer optimal solution. Note further that the game studied in \citep{deng1999algorithmic} is actually a special nonlinear game of (\ref{game:packing-subadditive}) with linear objective because the binary constraint $x\in \mathbb{B}^m$ is inherently nonlinear, e.g., $x\in \mathbb{B}^m \iff x\in X:=\left\{x\in \mathbb{R}^m_{+}: x_i(1-x_i)=0, i\in [m]\right\}$ and this $X$ just defined satisfies $\{\mathbf{0}_m, \mathbf{e}_1,\dots, \mathbf{e}_m\}\subseteq X$. 
\item As mentioned earlier, the flexibility afforded by choosing different $X$ in Theorem~\ref{thm:nonlinear-packing-game} is beyond the pure linear and integer linear games investigated in the previous literature. It also holds for mixed integer linear programming games where $X=\mathbb{R}^k_{+}\times \mathbb{B}^{\ell}$ with $k+\ell=m$. 

\item The Bondareva-Shapley theorem~\citep{bondareva1963some,shapley1967balanced}, in particular, \citep[Lemma 1]{shapley1967balanced} is a special case of partition version of Theorem~\ref{thm:nonlinear-packing-game} (i.e., Theorem~\ref{thm:nonlinear-partition-game}), as explained below. This connection seems to be unknown previously. Consequently, our model is general enough to imply the famous Bondareva-Shapley theorem~\citep{bondareva1963some,shapley1967balanced}. This is only possible by choosing $X$ appropriately, a luxury not afforded by the models of \citep[Theorem 4]{samet1984core} and~\citep[Theorem 1]{deng1999algorithmic} which are too special to imply the Bondareva-Shapley theorem. 

It is well-known that any grounded (revenue) game $\nu: \mathbb{B}^n\mapsto \mathbb{R}$  agree with its double concave conjugate (least affine majoriant) $\nu^{\star\star}$ on $\mathbb{B}^n$~\citep[Section 4.8]{stoer1970convexity}: 
\begin{eqnarray*}
\label{eq:biconjugate-max}\mathbb{B}^n\ni w\mapsto \nu(w)=\nu^{\star\star}(w)&:=&\max_{\lambda\in \mathbb{R}^{2^n}_{+}}\left[\sum_{a\in\mathbb{B}^n}\nu(a)\lambda(a): \sum_{a\in\mathbb{B}^n}a\lambda(a)=w, \sum_{a\in\mathbb{B}^n}\lambda(a)=1\right]\\ 
&\overset{\nu(\mathbf{0}_n=0)}{=}&\max_{\gamma\in \mathbb{R}^{2^n-1}_{+}}\left[\sum_{a\in\mathbb{B}^n\backslash \{\mathbf{0}_n\}}\nu(a)\gamma(a): \sum_{a\in\mathbb{B}^n\backslash \{\mathbf{0}_n\}}a\gamma(a)=w, \sum_{a\in\mathbb{B}^n\backslash \{\mathbf{0}_n\}}\gamma(a)\le 1\right]
\end{eqnarray*}
Note that $\nu(w)=\nu_{X, f}$ is a partition game in terms of the notations of Theorem~\ref{thm:nonlinear-partition-game}, where 
\begin{eqnarray*}
X&:=&\left\{\lambda\in \mathbb{R}^{2^n-1}_{+}: \sum_{a\in\mathbb{B}^n\backslash \{\mathbf{0}_n\}}\lambda(a)\le 1\right\}\\
f&:=&\sum_{a\in\mathbb{B}^n\backslash \{\mathbf{0}_n\}}\nu(a)\lambda(a)\\
A&\in&\mathbb{B}^{n\times (2^n-1)}: A_{i, a}=a_i, i\in [n], a\in \mathbb{B}^n\backslash \{\mathbf{0}_n\}
\end{eqnarray*}
Note further the relaxed game $\nu_{\mathbb{R}^{2^n-1}_{+}, f}$ of $\nu_{X, f}$ is just the \emph{totally balanced cover game} $\nu^{\text{tbc}}$ of $\nu$ introduced in \citep[(4-4)-(4-5)]{shapley1967balanced}. 
\begin{eqnarray*}
\mathbb{B}^n\ni w\mapsto \nu_{\mathbb{R}^{2^n-1}_{+}, f}=\nu^{\text{tbc}}(w)&:=&\max_{\lambda\in \mathbb{R}^{2^n-1}_{+}}\left[\sum_{a\in\mathbb{B}^n\backslash \{\mathbf{0}_n\}}\nu(a)\lambda(a): \sum_{a\in\mathbb{B}^n\backslash \{\mathbf{0}_n\}}a\lambda(a)=w\right]\\ 
&\overset{\text{dual}}{=}&\min_{y\in\mathbb{R}^n}\left[w^Ty: a^Ty \ge \nu(a), \forall a\in\mathbb{B}^n\backslash \{\mathbf{0}_n\}\right] 
\end{eqnarray*}

Now Theorem~\ref{thm:nonlinear-partition-game} can be invoked to imply the Bondareva-Shapley theorem~\citep[Lemma 1]{shapley1967balanced}.

\item A crucial distinction emerges in nonlinear combinatorial games where $X=\mathbb{B}^m$. Unlike linear games, the existence of an integer dual solution is only a necessary, not a sufficient, condition for core non-emptiness. Specifically, Proposition~\ref{prop:relax-exten}(i)(a) reveals a key implication: if $\nu_{\mathbb{R}^m_{+}, f}\left(\mathbf{1}_n\right)=\nu_{\mathbb{R}^m_{+}, F}\left(\mathbf{1}_n\right)$, then the linear program (LP) $\nu_{\mathbb{R}^m_{+}, F}\left(\mathbf{1}_n\right)$ possesses an integer optimal solution. However, the converse does not necessarily hold. Therefore, the existence of an integer optimal solution for $\nu_{\mathbb{R}^m_{+}, F}\left(\mathbf{1}_n\right)$ does not guarantee core non-emptiness, a critical difference from the results presented in \citep[Theorem 1]{deng1999algorithmic}. Here is one example to illustrate this failure with constraint matrix $A=I_2$, the 2-by-2 identity matrix. 
\begin{example}\label{exp:int-imlies-no-non-empty-core}
\[
\mathbb{B}^2\ni w\mapsto \nu_{\mathbb{B}^2,f}(w):=\max_{(x_1, x_2)\in\mathbb{B}^2}\{x_1+x_2-2x_1x_2: x_1\le w_1, x_2\le w_2\}\in\mathbb{R},
\]
with $\nu_{\mathbb{B}^2,f}(1,1)=1$ and optimal solutions $(x_1, x_2)=(1,0), (0,1)$. The LP relaxation is as follows: 
\[
\nu_{\mathbb{R}^2,F}(1,1)=\max_{(x_1, x_2)\in\mathbb{R}^2}\{x_1+x_2: 0\le x_1\le 1, 0\le x_2\le 1\}=2
\]
Because $\nu_{\mathbb{B}^2, f}(1,1)=1\ne 2=\nu_{\mathbb{R}^2, F}(1,1)$, Theorem~\ref{thm:nonlinear-packing-game} implies the core of this game $\nu$ is empty, which can also be checked directly. If $(u_1, u_2)$ is a core member of $\nu_{\mathbb{B}^2, f}$, then we have a contraction: $u_1\ge \nu_{\mathbb{B}^2, f} (1, 0)=1, u_2\ge \nu_{\mathbb{B}^2, f} (0, 1)=1\implies u_1+u_2\ge 2\ne 1=\nu_{\mathbb{B}^2, f} (1, 1)$. However, the LP relaxation has a unique integer optimal solution $(x_1, x_2)=(1,1)$ which are not optimal solutions of the original game when $w=(1,1)$, and hence showing Proposition~\ref{prop:relax-exten}(i-b).
\end{example}
\end{itemize}

\subsection{Equivalent characterizations}\label{subsec:equi-characterization-intermediate-game}

Theorem~\ref{thm:nonlinear-packing-game} characterizes the core non-emptiness by relating the original game 
$\nu_{X, f}$ with the relaxed anchor game $\nu_{\mathbb{R}^m_{+}, F}$. We now offer two more equivalent characterizations by introducing two new games and explore their relationships, so as to provide alternative perspectives on our main results and also transparentize the connections with existing results. 

Let $\overline{X}:=\{x\in X: f(x)=F(x)\}\ne\emptyset$ be the set of \emph{extension} points of $f$. Assumption (b) implies the non-emptiness of this set, and $\text{cone}(\overline{X})=\text{cone}(X)=\mathbb{R}^m_{+}$. As explained earlier, since $F$ is only a relaxation rather than an extension of $f$, we usually have proper inclusion $\overline{X}\subsetneq X$ and this is why we call points in $\overline{X}$ as \emph{extension} points where $f$ and $F$ agree: $F|_{\overline{X}}=f|_{\overline{X}}$. 

We introduce two new games below. The first of which $\nu_{X, F}$ in (\ref{game:packing-subadditive-intermediate-X-F}) is called the \emph{upper} game, which linearizes the objective function from $f$ to $F$ without changing the constraint $X$; and the second of which $\nu_{\overline{X}, F}=\overset{f|_{\overline{X}}=F|_{\overline{X}}}{=}\nu_{\overline{X}, f}$ in (\ref{game:packing-subadditive-intermediate-X-cap-Y-F}) is called the \emph{lower} game, which restricts the constraint from $X$ to $\overline{X}$, which also effectively linearizes the objective from $f$ to $F$ over $\overline{X}$. Hence both the upper and lower games are linear objective games with nonlinearity coming from the constraint alone. 
\begin{eqnarray}
\label{game:packing-subadditive-intermediate-X-F}
\mathbb{B}^n\ni w\mapsto\nu_{X, F}(w)&:=&\max\limits_{x\in X} \left[F(x): Ax\le w\right]\in \mathbb{R}\\
\label{game:packing-subadditive-intermediate-X-cap-Y-F}
\mathbb{B}^n\ni w\mapsto \nu_{\overline{X}, F}(w)&:=&\max\limits_{x\in \overline{X}} \left[F(x): Ax\le w\right]\overset{f|_{\overline{X}}=F|_{\overline{X}}}{=}\nu_{\overline{X}, f}(w):=\max\limits_{x\in \overline{X}} \left[f(x): Ax\le w\right]\in \mathbb{R}
\end{eqnarray}

Then evidently, the following orders hold among the four games:  
\begin{equation}\label{ineq:nonlinear}
\underbrace{\nu_{\mathbb{R}^m_{+},F}(w)}_{\text{anchor game}}\overset{X\subseteq \mathbf{R}^m_{+}}{\ge} \underbrace{\nu_{X,F}(w)}_{\text{upper game}}\overset{F\ge f}{\ge} \underbrace{\nu_{X,f}(w)}_{\text{original game}}\overset{X\supseteq \overline{X}}{\ge} \nu_{\overline{X},f}(w)\overset{f|_{\overline{X}}=F|_{\overline{X}}}{=}\underbrace{\nu_{\overline{X},F}(w)}_{\text{lower game}}, \forall w\in\mathbb{B}^n
\end{equation}
Now the original nonlinear game $\nu_{X,f}$ is bounded from above and below by two games $\nu_{X,F}$ (the upper game) and $\nu_{\overline{X},F}$ (the lower game), respectively. Both games have the same linear objective function $F$, while the former has the same domain $X$, and the latter has a smaller domain $\overline{X}$. Note that the anchor game $\nu_{\mathbb{R}^m_{+},F}$ is the common upper bound, which was used in Theorem~\ref{thm:nonlinear-packing-game} and will be used in all the upcoming characterizations. 

Theorem~\ref{thm:nonlinear-packing-game} fully characterizes the core via relating the original game with  the anchor game. Next we provide two more full characterizations: (i) one is via relating the upper game with the anchor game (Theorem~\ref{corr:packing-subadditive-intermediate-sandwich-equi}); and (ii) the other is via relating the lower game with the anchor game (Theorem~\ref{thm:core_nonempty}).

\subsubsection{Characterization via the upper game}
Going upstream from the original game $\nu_{X,f}$ in (\ref{ineq:nonlinear}), we have the following schematic relationship, indicating how new games are constructed from the original game:
\[
\nu_{X, f}\overset{f\mapsto F}{\implies} \nu_{X, F}\overset{X\mapsto \text{cone}(X)=\mathbf{R}^m_{+}}{\implies} \nu_{\mathbf{R}^m_{+}, F}.
\]
Therefore, we \emph{sequentially} linearize the objective function from $f$ to its basis-linear relaxation $F$ and then relax the domain from $X$ to its cone hull $\text{cone}(X)\overset{\{\mathbf{0}_m, \mathbf{e}_1,\dots, \mathbf{e}_m\}\in X}{=}\mathbb{R}^m_{+}$. In contrast, Theorem~\ref{thm:nonlinear-packing-game}(i) skips the intermediate game $\nu_{X, F}$ by \emph{simultaneously} linearizing the objective function and relaxing the domain.

We establish the following equivalent version of Theorem~\ref{thm:nonlinear-packing-game}(i) by exploiting the intermediate game $\nu_{X, F}$, which will allow us to interpret Theorem~\ref{thm:nonlinear-packing-game}(i) from a new perspective and reveal deeper connections with previous linear objective games.
\begin{thm}\label{corr:packing-subadditive-intermediate-sandwich-equi} For the nonlinear game (\ref{game:packing-subadditive}) under assumptions (a-c), the core of the game $\nu_{X, f}$ is non-empty if and only if both of the following conditions hold:
\begin{enumerate}[(i)]
\item $\nu_{X, F}\left(\mathbf{1}_n\right)=\nu_{\mathbb{R}^m_{+}, F}\left(\mathbf{1}_n\right)$ as in (\ref{game:packing-subadditive-intermediate-X-F}) and (\ref{eq:NLP-relax-primal}), respectively, by choosing $w=\mathbf{1}_n$; and
\item there exists $x^*\in \argmax\limits_{x\in X} \left[F(x): Ax\le \mathbf{1}_n\right]$, as in (\ref{game:packing-subadditive-intermediate-X-F}) by choosing $w=\mathbf{1}_n$, such that $f(x^*)=F(x^*)$.
\end{enumerate}
\end{thm}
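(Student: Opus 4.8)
The plan is to deduce Theorem~\ref{corr:packing-subadditive-intermediate-sandwich-equi} from Theorem~\ref{thm:nonlinear-packing-game}(i), which already tells us that the core of $\nu_{X,f}$ is non-empty if and only if $\nu_{X,f}(\mathbf{1}_n)=\nu_{\mathbb{R}^m_+,F}(\mathbf{1}_n)$. So it suffices to show that this single equality is equivalent to the conjunction of conditions (i) and (ii) in the statement. Throughout I will lean on the chain of inequalities (\ref{ineq:nonlinear}), specialized to $w=\mathbf{1}_n$, namely
\[
\nu_{\mathbb{R}^m_+,F}(\mathbf{1}_n)\ge \nu_{X,F}(\mathbf{1}_n)\ge \nu_{X,f}(\mathbf{1}_n)\ge \nu_{\overline{X},f}(\mathbf{1}_n)=\nu_{\overline{X},F}(\mathbf{1}_n),
\]
together with Proposition~\ref{prop:relax-exten}, applied with $X_1=X$, $X_2=\mathbb{R}^m_+$, the function $f$ restricted to the feasible region $\{x\in X: Ax\le\mathbf{1}_n\}$, and its basis-linear relaxation $F$ on $\{x\in\mathbb{R}^m_+: Ax\le\mathbf{1}_n\}$ (this is a legitimate relaxation in the sense of Definition~\ref{def:relaxation-extension}(i)(a) since $F|_X\ge f$).

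For the forward direction, suppose the core is non-empty, so $\nu_{X,f}(\mathbf{1}_n)=\nu_{\mathbb{R}^m_+,F}(\mathbf{1}_n)$. Then the sandwich above forces $\nu_{X,F}(\mathbf{1}_n)=\nu_{\mathbb{R}^m_+,F}(\mathbf{1}_n)$, which is exactly (i). For (ii), let $x^*\in\argmax_{x\in X}[F(x):Ax\le\mathbf{1}_n]$; since $\nu_{X,F}(\mathbf{1}_n)=\nu_{X,f}(\mathbf{1}_n)$ and $F(x^*)\ge f(x^*)$ while $f(x^*)\le \nu_{X,f}(\mathbf{1}_n)=F(x^*)$, we get $f(x^*)=F(x^*)$, which is (ii). (Equivalently, this is the instantiation of the "relaxation-is-tight" logic in Proposition~\ref{prop:relax-exten}(i)(a)/(c).)

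For the reverse direction, assume (i) and (ii). Pick the $x^*$ guaranteed by (ii). Since $f(x^*)=F(x^*)$ and $x^*\in X$ is feasible for $\nu_{X,f}(\mathbf{1}_n)$, we have $\nu_{X,f}(\mathbf{1}_n)\ge f(x^*)=F(x^*)=\nu_{X,F}(\mathbf{1}_n)$; combined with $\nu_{X,f}(\mathbf{1}_n)\le\nu_{X,F}(\mathbf{1}_n)$ from (\ref{ineq:nonlinear}) we get $\nu_{X,f}(\mathbf{1}_n)=\nu_{X,F}(\mathbf{1}_n)$, and by (i) this equals $\nu_{\mathbb{R}^m_+,F}(\mathbf{1}_n)$. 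Hence $\nu_{X,f}(\mathbf{1}_n)=\nu_{\mathbb{R}^m_+,F}(\mathbf{1}_n)$ and Theorem~\ref{thm:nonlinear-packing-game}(i) yields non-emptiness of the core.

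The argument is essentially bookkeeping with the inequality chain, so there is no deep obstacle; the one point that needs care is making sure condition (ii) is quantified correctly. In the forward direction I must check that the tightness $f(x^*)=F(x^*)$ holds for \emph{every} maximizer $x^*$ of the upper game (not merely for some), because the equality $\nu_{X,F}(\mathbf{1}_n)=\nu_{X,f}(\mathbf{1}_n)$ pins down $F(x^*)$ and sandwiches $f(x^*)$ between $\nu_{X,f}(\mathbf{1}_n)$ (from feasibility) and $F(x^*)$ (from $F\ge f$); as stated, (ii) only asks for existence, which is the weaker and hence easier half, so the equivalence goes through. One should also note explicitly that $\overline{X}\ne\emptyset$ and $\mathrm{cone}(\overline{X})=\mathbb{R}^m_+$ (from assumption (b), since $\mathbf{e}_j\in\overline{X}$) so that all four games in (\ref{ineq:nonlinear}) are well defined with finite values, but this was already established before the theorem statement.
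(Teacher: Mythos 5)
Your sufficiency (reverse) direction is correct and is essentially the paper's argument. The necessity (forward) direction, however, contains a genuine error: you let $x^*$ be an \emph{arbitrary} maximizer of the upper game $\nu_{X,F}(\mathbf{1}_n)$ and claim $f(x^*)=F(x^*)$, justified by saying $f(x^*)$ is sandwiched between $\nu_{X,f}(\mathbf{1}_n)$ ``from feasibility'' and $F(x^*)$ ``from $F\ge f$.'' But feasibility of $x^*$ for the original game only yields the upper bound $f(x^*)\le\nu_{X,f}(\mathbf{1}_n)$, and $F\ge f$ yields the same upper bound $f(x^*)\le F(x^*)$ again; you have no lower bound on $f(x^*)$, hence no sandwich and no equality. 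The stronger claim in your closing paragraph, that tightness holds for \emph{every} maximizer of the upper game, is in fact false. Take $n=2$, $m=3$, $X=\mathbb{B}^3$, $A=\bigl(\begin{smallmatrix}1&0&1\\ 0&1&1\end{smallmatrix}\bigr)$, and $f(x)=x_1+x_2+2x_3-2x_1x_2$, so that $F(x)=x_1+x_2+2x_3$ and $f-F=-2x_1x_2\le 0$ on $\mathbb{B}^3$ (assumptions (a--c) hold). Then $\nu_{X,f}(\mathbf{1}_2)=\nu_{X,F}(\mathbf{1}_2)=\nu_{\mathbb{R}^3_{+},F}(\mathbf{1}_2)=2$, so the core is non-empty, yet $(1,1,0)$ is a maximizer of the upper game with $f(1,1,0)=0<2=F(1,1,0)$. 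So your argument proves a false statement, and as written it does not establish (ii).

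The repair is exactly the paper's choice of witness: under core non-emptiness, Theorem~\ref{thm:nonlinear-packing-game}(i) and (\ref{ineq:nonlinear}) give $\nu_{X,F}(\mathbf{1}_n)=\nu_{X,f}(\mathbf{1}_n)$ (your condition (i)); now take $x^*$ to be a maximizer of the \emph{original} game $\nu_{X,f}(\mathbf{1}_n)$. Then $\nu_{X,F}(\mathbf{1}_n)=\nu_{X,f}(\mathbf{1}_n)=f(x^*)\le F(x^*)\le\nu_{X,F}(\mathbf{1}_n)$ forces equalities throughout, so this particular $x^*$ lies in $\argmax_{x\in X}[F(x):Ax\le\mathbf{1}_n]$ and satisfies $f(x^*)=F(x^*)$, which is precisely the existential statement (ii). In short: the quantifier issue you flagged is not ``the easier half that goes through automatically''; the existence claim must be witnessed by an optimizer of $\nu_{X,f}$, not by an arbitrary optimizer of $\nu_{X,F}$.
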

\begin{proof}
For necessity, if $\text{core}\left(\nu_{X, f}\right)\ne\emptyset$, then Theorem~\ref{thm:nonlinear-packing-game}(i) implies the first two inequalities in (\ref{ineq:nonlinear}) are actually equalities when choosing $w=\mathbf{1}_n$, and hence (i). Moreover, $\forall x^*\in \argmax\limits_{x\in X} \left[f(x): Ax\le \mathbf{1}_n\right]$ as in (\ref{game:packing-subadditive}) by choosing $w=\mathbf{1}_n$, we have $\nu_{X, F}\left(\mathbf{1}_n\right)=\nu_{X, f}\left(\mathbf{1}_n\right)=f(x^*)\le F(x^*)\le \nu_{X, F}\left(\mathbf{1}_n\right)$ implying equalities throughout, and hence (ii).

For sufficiency, $\nu_{X, F}\left(\mathbf{1}_n\right)\ge \nu_{X, f}\left(\mathbf{1}_n\right)\ge f(x^*)=F(x^*) =\nu_{X, F}\left(\mathbf{1}_n\right)=\nu_{\mathbb{R}^m_{+}, F}\left(\mathbf{1}_n\right)$ implying equalities throughout. Hence $\text{core}\left(\nu_{X, f}\right)\ne\emptyset$ due to Theorem~\ref{thm:nonlinear-packing-game}(i).
\end{proof}

With this new characterizations, the relationships among the games in this work, and the games studied in the previous literature become more transparent. 

For example, when $X=\mathbb{B}^m$ and $f(x)=c^Tx$ (and hence $f=F$), then 
\begin{eqnarray*}
\nu_{X, f}= \nu_{X, F}\le \nu_{\mathbf{R}^m_{+}, F}.
\end{eqnarray*}
Therefore, the intermediate game and the original game coincide, implying (ii) in Theorem~\ref{corr:packing-subadditive-intermediate-sandwich-equi} trivially holds and (i) in Theorem~\ref{corr:packing-subadditive-intermediate-sandwich-equi} is just \citep[Theorem 1]{deng1999algorithmic}. 

As another example, when $X=\mathbb{R}^m_{+}$ and $f(x)=c^Tx$, then 
\begin{eqnarray*}
\nu_{X, f}= \nu_{X, F}= \nu_{\mathbf{R}^m_{+}, F}.
\end{eqnarray*}
Therefore, all three games coincide, implying both (i) and (ii) in Theorem~\ref{corr:packing-subadditive-intermediate-sandwich-equi} trivially hold and Theorem~\ref{corr:packing-subadditive-intermediate-sandwich-equi} is just \citep[Theorem 4]{samet1984core}. 

Chronologically, for $X=\mathbb{B}^m$, our nonlinear game $\nu_{X, f}$ is an extension of the combinatorial game $\nu_{X, F}$~\citep{deng1999algorithmic}, which in turn is an extension of linear programming game $\nu_{\mathbf{R}^m_{+}, F}$~\citep{samet1984core}. However our main results not only extend the objective function from linear to nonlinear, but also extend the domain $X$ from the Boolean lattice $\mathbb{B}^m$ to more general constraint set, allowing us treating more games as we have demonstrated earlier and will be demonstrated further later on.

We illustrate Theorem~\ref{corr:packing-subadditive-intermediate-sandwich-equi} by using Example~\ref{exp:int-imlies-no-non-empty-core} again, where $X=\mathbb{B}^2, f=x_1+x_2-2x_1x_2, F=x_1+x_2$. The three games involved are as follows: 
\begin{eqnarray*}
\mathbb{B}^2\ni w\mapsto \nu_{\mathbb{B}^2,f}(w)&:=&\max_{(x_1, x_2)\in\mathbb{B}^2}\{x_1+x_2-2x_1x_2: x_1\le w_1, x_2\le w_2\};\\
\mathbb{B}^2\ni w\mapsto \nu_{\mathbb{B}^2,F}(w)&=&\max_{(x_1, x_2)\in\mathbb{B}^2}\{x_1+x_2: x_1\le w_1, x_2\le w_2\};\\
\mathbb{B}^2\ni w\mapsto \nu_{\mathbb{R}^2_{+},F}(w)&=&\max_{(x_1, x_2)\in\mathbb{R}^2_{+}}\{x_1+x_2: x_1\le w_1, x_2\le w_2\}.
\end{eqnarray*}
Then Theorem~\ref{corr:packing-subadditive-intermediate-sandwich-equi}(i) is satisfied because
$\nu_{\mathbb{B}^2, F}(1,1)=2=\nu_{\mathbb{R}^2_{+}, F}(1,1)$. However, Theorem~\ref{corr:packing-subadditive-intermediate-sandwich-equi}(ii) is violated because the only optimal solution of $\nu_{\mathbb{B}^2,F}(1,1)$ is $(x_1,x_2)=(1,1)$ whose optimal value $F(1,1)=2$ is not equal to $f(1,1)=0$. We reach the same conclusion that the core of this particular game $\nu_{\mathbb{B}^2,f}$ is empty.

\subsubsection{Characterization via the lower game}

Going downstream from the original game $\nu_{X,f}$ in (\ref{ineq:nonlinear}), we have the following schematic relationship, indicating how new games are constructed from the original game:
\[
\nu_{X, f}\overset{X\mapsto \overline{X}}{\implies} \nu_{\overline{X}, f}\overset{F|_{\overline{X}}=f|_{\overline{X}}}{\implies} \nu_{ \overline{X}, F}=\nu_{ \overline{X}, f}
\]

We obtain the following full characterization that is equivalent to Theorem \ref{thm:nonlinear-packing-game}.
\begin{thm} \label{thm:core_nonempty} For the nonlinear game (\ref{game:packing-subadditive}) under assumptions (a-c), the core of the game $\nu_{X, f}$ is non-empty if and only if $\nu_{\overline{X},F}\left(\mathbf{1}_n\right)=\nu_{\mathbb{R}^m_{+},F}\left(\mathbf{1}_n\right)$, as in (\ref{game:packing-subadditive-intermediate-X-cap-Y-F}) and (\ref{eq:NLP-relax-primal}), respectively, by choosing $w=\mathbf{1}_n$;.
\end{thm}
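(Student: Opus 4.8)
The plan is to deduce this characterization from the already-established Theorem~\ref{thm:nonlinear-packing-game}(i) and Theorem~\ref{corr:packing-subadditive-intermediate-sandwich-equi}, combined with the chain of inequalities (\ref{ineq:nonlinear}) read at $w=\mathbf{1}_n$. The organizing observation is that $\nu_{\overline{X},F}$ occupies the very bottom of (\ref{ineq:nonlinear}) while the anchor game $\nu_{\mathbb{R}^m_{+},F}$ occupies the very top; therefore the single equality $\nu_{\overline{X},F}(\mathbf{1}_n)=\nu_{\mathbb{R}^m_{+},F}(\mathbf{1}_n)$ is equivalent to the collapse of the whole chain at $w=\mathbf{1}_n$, which is exactly what packages together ``$\nu_{X,f}(\mathbf{1}_n)=\nu_{\mathbb{R}^m_{+},F}(\mathbf{1}_n)$'' with the extra squeeze needed to land inside $\overline{X}$.

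For the ``if'' direction, suppose $\nu_{\overline{X},F}(\mathbf{1}_n)=\nu_{\mathbb{R}^m_{+},F}(\mathbf{1}_n)$. Since every term appearing in (\ref{ineq:nonlinear}) at $w=\mathbf{1}_n$ lies between these two equal quantities, all the inequalities there are in fact equalities; in particular $\nu_{X,f}(\mathbf{1}_n)=\nu_{\mathbb{R}^m_{+},F}(\mathbf{1}_n)$, and Theorem~\ref{thm:nonlinear-packing-game}(i) then yields $\text{core}(\nu_{X,f})\ne\emptyset$.

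For the ``only if'' direction, assume $\text{core}(\nu_{X,f})\ne\emptyset$. Theorem~\ref{thm:nonlinear-packing-game}(i) gives $\nu_{X,f}(\mathbf{1}_n)=\nu_{\mathbb{R}^m_{+},F}(\mathbf{1}_n)$, which via (\ref{ineq:nonlinear}) also forces $\nu_{X,F}(\mathbf{1}_n)=\nu_{\mathbb{R}^m_{+},F}(\mathbf{1}_n)$. By Theorem~\ref{corr:packing-subadditive-intermediate-sandwich-equi}(ii) there is some $x^*\in\argmax_{x\in X}[F(x):Ax\le\mathbf{1}_n]$ with $f(x^*)=F(x^*)$; thus $x^*\in\overline{X}$ and $x^*$ is feasible for the lower game $\nu_{\overline{X},F}$ at $w=\mathbf{1}_n$. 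Hence $\nu_{\overline{X},F}(\mathbf{1}_n)\ge F(x^*)=\nu_{X,F}(\mathbf{1}_n)=\nu_{\mathbb{R}^m_{+},F}(\mathbf{1}_n)$, and since the reverse inequality is part of (\ref{ineq:nonlinear}), equality holds, completing the proof.

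I do not expect a genuine obstacle here: the content is bookkeeping on the chain (\ref{ineq:nonlinear}) plus the two prior theorems. The only step requiring care is the ``only if'' direction, where one must exhibit a maximizer of the anchor (equivalently upper) game at $\mathbf{1}_n$ that is simultaneously an extension point of $f$; this is precisely the service provided by condition (ii) of Theorem~\ref{corr:packing-subadditive-intermediate-sandwich-equi}, so it costs nothing once that theorem is in hand. As an alternative that bypasses Theorem~\ref{corr:packing-subadditive-intermediate-sandwich-equi}, one can instead take a maximizer $\hat x$ of $\nu_{X,f}(\mathbf{1}_n)$ (attained by assumption (a)) and observe that $f(\hat x)=\nu_{\mathbb{R}^m_{+},F}(\mathbf{1}_n)\ge F(\hat x)\ge f(\hat x)$ by individual subadditivity, forcing $\hat x\in\overline{X}$; the same squeeze then delivers the claimed equality.
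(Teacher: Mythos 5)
Your proposal is correct, and its content is essentially the paper's argument with one cosmetic difference in packaging. The ``if'' direction is identical: equality of the two endpoints of the chain (\ref{ineq:nonlinear}) at $w=\mathbf{1}_n$ collapses the whole chain, giving $\nu_{X,f}(\mathbf{1}_n)=\nu_{\mathbb{R}^m_{+},F}(\mathbf{1}_n)$ and hence a non-empty core by Theorem~\ref{thm:nonlinear-packing-game}(i). For the ``only if'' direction, the paper takes a maximizer $x$ of the \emph{original} game $\nu_{X,f}(\mathbf{1}_n)$ and squeezes $\nu_{X,F}(\mathbf{1}_n)=\nu_{X,f}(\mathbf{1}_n)=f(x)\le F(x)\le\nu_{X,F}(\mathbf{1}_n)$ to conclude $x\in\overline{X}$, whence $\nu_{\overline{X},F}(\mathbf{1}_n)\ge f(x)$ closes the chain; your main write-up instead imports the extension-point maximizer $x^*$ of the \emph{upper} game supplied by Theorem~\ref{corr:packing-subadditive-intermediate-sandwich-equi}(ii), which is a legitimate shortcut since that theorem is already proved, and your stated alternative (squeezing a maximizer $\hat x$ of $\nu_{X,f}(\mathbf{1}_n)$ via individual subadditivity) is precisely the paper's own argument. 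The Theorem~\ref{corr:packing-subadditive-intermediate-sandwich-equi} route buys brevity by reusing prior work, while the paper's (and your alternative's) direct squeeze keeps the proof self-contained modulo Theorem~\ref{thm:nonlinear-packing-game}; there is no gap in either version.
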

\begin{proof}
On the one hand, if the core of the game $\nu_{X,f}$ is non-empty, then $\nu_{\mathbb{R}^m_{+},F}\left(\mathbf{1}_n\right)=\nu_{X,F}\left(\mathbf{1}_n\right)=\nu_{X,f}\left(\mathbf{1}_n\right)$ hold by using Theorem \ref{thm:nonlinear-packing-game}. Let $x$ be an optimal solution of $\nu_{X,f}\left(\mathbf{1}_n\right)$.
Then $\nu_{X,F}\left(\mathbf{1}_n\right)=\nu_{X,f}\left(\mathbf{1}_n\right)=f(x)\leq F(x)\leq \nu_{X,F}\left(\mathbf{1}_n\right)$.
It follows that $F(x)=f(x)$, i.e., $x\in \overline{X}$.
Thus, we have $\nu_{X,f}\left(\mathbf{1}_n\right)\geq \nu_{\overline{X},f}\left(\mathbf{1}_n\right)\geq f(x)=\nu_{X,f}\left(\mathbf{1}_n\right)$.
This implies that $\nu_{X,f}\left(\mathbf{1}_n\right)=\nu_{\overline{X},f}\left(\mathbf{1}_n\right)=\nu_{\overline{X},F}\left(\mathbf{1}_n\right)$.

On the other hand, suppose that $\nu_{\mathbb{R}^m_{+},F}\left(\mathbf{1}_n\right)=\nu_{\overline{X},F}\left(\mathbf{1}_n\right)$, we obtain that the core is non-empty by using Theorem \ref{thm:nonlinear-packing-game} and inequalities~(\ref{ineq:nonlinear}).
\end{proof}

As two concrete applications of this new characterization, if $X=\mathbb{B}^m$ or $X=\mathbb{R}^m_{+}$ and $f(x)=c^Tx$ (and hence $f=F$), then $\overline{X}=X$ and
\begin{eqnarray*}
\nu_{\mathbf{R}^m_{+}, F}\ge \nu_{X, f}=\nu_{\overline{X}, f}= \nu_{\overline{X}, F}.
\end{eqnarray*}
Therefore, Theorem~\ref{thm:core_nonempty} is just \citep[Theorem 1]{deng1999algorithmic} when $X=\mathbb{B}^m$ or \citep[Theorem 4]{samet1984core} when $X=\mathbb{R}^m_{+}$.

We illustrate Theorem~\ref{thm:core_nonempty} by using Example~\ref{exp:int-imlies-no-non-empty-core} again, where $X=\mathbb{B}^2, f=x_1+x_2-2x_1x_2, F=x_1+x_2$. Then $\overline{X}=\{x\in \mathbb{B}^2: f(x)=F(x)\}=\{(0,0), (1,0), (0,1)\}$. The three games involved are as follows: 
\begin{eqnarray*}
\mathbb{B}^2\ni w\mapsto \nu_{X,f}(w)&:=&\max_{(x_1, x_2)\in\mathbb{B}^2}\{x_1+x_2-2x_1x_2: x_1\le w_1, x_2\le w_2\};\\
\mathbb{B}^2\ni w\mapsto \nu_{\overline{X},F}(w)&=&\max_{(x_1, x_2)\in\{(0,0), (1,0), (0,1)\}}\{x_1+x_2: x_1\le w_1, x_2\le w_2\};\\
= \nu_{\overline{X},f}(w)&=&\max_{(x_1, x_2)\in\{(0,0), (1,0), (0,1)\}}\{x_1+x_2-2x_1x_2: x_1\le w_1, x_2\le w_2\}\\
\mathbb{B}^2\ni w\mapsto \nu_{\mathbb{R}^2_{+},F}(w)&=&\max_{(x_1, x_2)\in\mathbb{R}^2_{+}}\{x_1+x_2: x_1\le w_1, x_2\le w_2\}.
\end{eqnarray*}
Then Theorem~\ref{thm:core_nonempty} is violated because $\nu_{\overline{X},F}(1,1)=\nu_{\overline{X},f}(1,1)=1\ne \nu_{\mathbf{R}^2_{+},F}(1,1)=2$. We reach the same conclusion that the core of this particular game $\nu_{\mathbb{B}^2,f}$ is empty.

\subsection{Two concrete examples}\label{subsec:examples-illustrative-concrete}

We present two concrete quadratic objective function examples to illustrate Theorem~\ref{thm:nonlinear-packing-game}, where the first has non-empty core and the second has an empty core. We will use the same constraint matrix as follows:
\[
A=\begin{pmatrix}
1&1&0&0\\
0&0&1&1\\
1&0&1&0\\
0&1&0&1
\end{pmatrix}
\]
\begin{example} Given a set function
\[
\mathbb{B}^4\ni x\mapsto f(x):=x_1+x_2+x_3+x_4-(x_1x_2+x_1x_3+x_2x_4+x_3x_4)
\]
its relaxation is
\[
\mathbb{R}^4\ni x\mapsto F(x):=x_1+x_2+x_3+x_4
\]
The nonlinear packing game is as follows:
\begin{eqnarray*}
\mathbb{B}^4\ni w\mapsto \nu_{\mathbb{B}^4,f}(w):=\max_{x\in \mathbb{B}^4} [f(x): Ax\le w]
\end{eqnarray*}
\end{example}
Note that the objective function $f$ is submodular and hence equivalent to individually subadditive (See Proposition~\ref{prop:id-sub}(v)). 

For this simple example, we can compute its characteristic function as follows:
\[
\nu_{\mathbb{B}^4,f}(w)=
\begin{cases}
0, &w=0, \mathbf{e}_i, i\in [4], \mathbf{e}_1+\mathbf{e}_2, \mathbf{e}_3+\mathbf{e}_4;\\
1, & w= \mathbf{e}_i+\mathbf{e}_j, i\in \{1,2\}, j\in \{3,4\}, \mathbf{e}_i+\mathbf{e}_j+\mathbf{e}_k, i\ne j\ne k\\
2,  & w=\mathbf{e}_1+\mathbf{e}_2+\mathbf{e}_3+\mathbf{e}_4=\mathbf{1}_4
\end{cases}
\]
Evidently the LP relaxation has an optimal value 2 and there exists an integer optimal solution $x_1=x_2=1, x_3=x_4=0$:
\begin{eqnarray*}
\nu_{\mathbb{R}^4,F}\left(\mathbf{1}_4\right)&:=&\max [F(x): Ax\le 1]\\
&=&\max[x_1+x_2+x_3+x_4: x_1+x_2\le 1, x_3+x_4\le 1, x_1+x_3\le 1,  x_2+x_4\le 1, x\ge 0]=2
\end{eqnarray*}
Therefore $\nu_{\mathbb{B}^4,f}\left(\mathbf{1}_4,f\right)=\nu_{\mathbb{R}^4,F}\left(\mathbf{1}_4\right)=2$. So Theorem~\ref{thm:nonlinear-packing-game} says the core of the game $\nu_{\mathbb{B}^4,f}$ is non-empty, and any dual optimal solution gives such a core member. The dual of the LP relaxation is as follow:
\[
\min[y_1+y_2+y_3+y_4: y_1+y_3\ge 1, y_1+y_4\ge 1, y_2+y_3\ge 1, y_2+y_4\ge 1, y\ge 0] 
\]
with optimal solution $y^*_1=y^*_2=y^*_3=y^*_4=\frac{1}{2}$, which can be verified to belong to the core. \qed

\begin{example} Given a slightly different set function
\[
\mathbb{B}^4\ni x\mapsto f(x):=(x_1+x_2+x_3+x_4)-(x_1x_2+x_1x_3+x_2x_4+x_3x_4+\underline{x_1x_4+x_2x_3})
\]
its relaxation is still the same
\[
\mathbb{R}^4\ni x\mapsto F(x):=x_1+x_2+x_3+x_4
\]
The nonlinear packing game is as follows:
\begin{eqnarray*}
\mathbb{B}^4\ni w\mapsto \nu_{\mathbb{B}^4,f}(w):=\max [f(x): Ax\le w]
\end{eqnarray*}
\end{example}

For this simple example, we can compute its characteristic function as follows:
\[
\nu_{\mathbb{B}^4,f}(w)=
\begin{cases}
0, &w=0, \mathbf{e}_i, i\in [4], \mathbf{e}_1+\mathbf{e}_2, \mathbf{e}_3+\mathbf{e}_4;\\
1,  & \text{otherwise}
\end{cases}
\]
Evidently the LP relaxation has an optimal value 2 and there exists an integer optimal solution $x_1=x_4=1, x_2=x_3=0$:
\begin{eqnarray*}
\nu_{\mathbb{R}^4,F}\left(\mathbf{1}_4\right)&:=&\max [F(x): Ax\le 1]\\
&=&\max[x_1+x_2+x_3+x_4: x_1+x_2\le 1, x_3+x_4\le 1, x_1+x_3\le 1,  x_2+x_4\le 1, x\ge 0]=2
\end{eqnarray*}
Therefore $\nu_{\mathbb{B}^4,f}\left(\mathbf{1}_4\right)=1\ne \nu_{\mathbb{R}^4,F}\left(\mathbf{1}_4\right)=2$. So Theorem~\ref{thm:nonlinear-packing-game} says the core of the game $\nu_{\mathbb{B}^4,f}$ is empty. Indeed if $(u_1,u_2,u_3,u_3)$ is a core member, then we have a contradiction:
\[
u_1+u_3\ge \nu_{\mathbb{B}^4,f}(\mathbf{e}_1+\mathbf{e}_3)=1, u_2+u_4\ge \nu_{\mathbb{B}^4,f}(\mathbf{e}_2+\mathbf{e}_4)=1\implies u_1+u_2+u_3+u_4\ge 2\ne 1=\nu_{\mathbb{B}^4,f}\left(\mathbf{1}_4\right),
\]
implying empty core. This is another example that shows that even the $\nu_{\mathbb{R}^4,F}\left(\mathbf{1}_n\right)$ has an integer optimal solution, the core is still empty. Moreover the empty core actually is due to the fact that this $\nu$ is not even grand superadditive. \qed

\section{Further results}\label{sec:further-results}

\subsection{Covering and partition games}\label{sub:cover-partition-games}
Analogous results for the nonlinear covering (cost) game and partition (revenue or cost) game are readily available with almost the same argument.
\begin{eqnarray*}
\mathbb{B}^n\ni w\mapsto \nu_{X, f}^{\text{cov}}(w)&:=&\min_{x\in X}  \left[f(x): Ax\ge w\right]\in  \mathbb{R}\\
\mathbb{B}^n\ni w\mapsto \nu_{X, f}^{\text{ptn}}(w)&:=&\max_{x\in X}  \left[f(x): Ax= w\right]\in  \mathbb{R}
\end{eqnarray*}
The corresponding relaxations along with their duals are
\begin{eqnarray*}
\mathbb{B}^n\ni w\mapsto \nu_{\mathbb{R}^m_{+}, F}^{\text{cov}}(w)
&=&\min_{x\in X} \left[(f(\mathbf{e}_1),\dots, f(\mathbf{e}_m)x: Ax\ge w\right]\\
&=&\max_{y\in \mathbb{R}^n_{+}} \left[w^Ty: y^TA\le (f(\mathbf{e}_1),\dots, f(\mathbf{e}_m)\right]\\ 
\mathbb{B}^n\ni w\mapsto  \nu_{\mathbb{R}^m_{+}, F}^{\text{ptn}}(w)&=&\max_{x\in X} \left[(f(\mathbf{e}_1),\dots, f(\mathbf{e}_m)x: Ax= w\right]\\
&=&\min_{y\in \mathbb{R}^n} \left[w^Ty: y^TA\le (f(\mathbf{e}_1),\dots, f(\mathbf{e}_m)\right] 
\end{eqnarray*}

\begin{thm}\label{thm:nonlinear-covering-game} (covering game)
Under assumptions (a-b) and $f(x): X\mapsto \mathbb{R}$ is individually supadditive, then 
\[
\text{core}\left(\nu_{X, f}^{\text{cov}}\right)\ne\emptyset \iff \nu_{X, f}^{\text{cov}}\left(\mathbf{1}_n\right)=\nu_{\mathbb{R}^m_{+}, F}^{\text{cov}}\left(\mathbf{1}_n\right).
\]
Moreover, whenever non-empty, 
\[
\text{core}\left(\nu_{X, f}^{\text{cov}}\right)= \argmax_{y\in \mathbb{R}^n_{+}} \left[\mathbf{1}_n^Ty: y^TA\le (f(\mathbf{e}_1),\dots, f(\mathbf{e}_m)\right].
\]
\end{thm}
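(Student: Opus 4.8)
The plan is to mirror the proof of Theorem~\ref{thm:nonlinear-packing-game} essentially verbatim, with all inequalities reversed so that the LP-duality argument runs in the direction appropriate for a minimization (cost) game whose core is characterized by maximizing the dual. The core of a cost game $\nu^{\text{cov}}_{X,f}$ consists of those $x\in\mathbb{R}^n$ with $\mathbf{1}_n^Tx=\nu^{\text{cov}}_{X,f}(\mathbf{1}_n)$ and $a^Tx\le \nu^{\text{cov}}_{X,f}(a)$ for all $a\in\mathbb{B}^n$, so the analogue of the two polyhedra $\Omega_1,\Omega_2$ becomes
\[
\Omega_1^{\text{cov}}:=\left\{y\in\mathbb{R}^n_{+}: a^Ty\le \nu^{\text{cov}}_{X,f}(a),\ \forall a\in\mathbb{B}^n\right\},\qquad
\Omega_2^{\text{cov}}:=\left\{y\in\mathbb{R}^n_{+}: y^TA\le (f(\mathbf{e}_1),\dots,f(\mathbf{e}_m))\right\}.
\]
First I would show $\Omega_1^{\text{cov}}=\Omega_2^{\text{cov}}$. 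For the inclusion $\Omega_1^{\text{cov}}\subseteq\Omega_2^{\text{cov}}$, choose $a=A\mathbf{e}_j$; since $\mathbf{e}_j\in X$ is feasible for the covering program with right-hand side $A\mathbf{e}_j$ (indeed $A\mathbf{e}_j\ge A\mathbf{e}_j$), we get $\nu^{\text{cov}}_{X,f}(A\mathbf{e}_j)\le f(\mathbf{e}_j)$, hence $(A\mathbf{e}_j)^Ty\le\nu^{\text{cov}}_{X,f}(A\mathbf{e}_j)\le f(\mathbf{e}_j)$ for all $j$, which is exactly $y^TA\le(f(\mathbf{e}_1),\dots,f(\mathbf{e}_m))$. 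For the reverse inclusion, take $y\in\Omega_2^{\text{cov}}$, any $a\in\mathbb{B}^n$, and any feasible $x\in X$ with $Ax\ge a$; then using $y\ge 0$, $Ax\ge a$, $x\ge 0$, and individual superadditivity ($f(x)\ge (f(\mathbf{e}_1),\dots,f(\mathbf{e}_m))x = F(x)$) we obtain $a^Ty\le (Ax)^Ty = y^TAx \le (f(\mathbf{e}_1),\dots,f(\mathbf{e}_m))x = F(x)\le f(x)$; taking the minimum over such $x$ gives $a^Ty\le\nu^{\text{cov}}_{X,f}(a)$, so $y\in\Omega_1^{\text{cov}}$.

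Next I would assemble the chain of (in)equalities exactly as in the packing proof. By LP weak duality applied to the relaxed covering program, $\nu^{\text{cov}}_{\mathbb{R}^m_{+},F}(\mathbf{1}_n)=\max_{y\in\mathbb{R}^n_{+}}\{\mathbf{1}_n^Ty: y^TA\le(f(\mathbf{e}_1),\dots,f(\mathbf{e}_m))\}=\max_{y\in\Omega_2^{\text{cov}}}\mathbf{1}_n^Ty=\max_{y\in\Omega_1^{\text{cov}}}\mathbf{1}_n^Ty \le \nu^{\text{cov}}_{X,f}(\mathbf{1}_n)$, the last inequality being the covering analogue of the LP-relaxation bound (every feasible dual $y\in\Omega_1^{\text{cov}}$ satisfies $\mathbf{1}_n^Ty\le\nu^{\text{cov}}_{X,f}(\mathbf{1}_n)$, and the relaxed game value equals $\min\{\mathbf{1}_n^Tx: x\in X,\ Ax\ge\mathbf{1}_n\}$ which dominates from below via $F\le f$ is reversed — here $\nu^{\text{cov}}_{\mathbb{R}^m_{+},F}\le\nu^{\text{cov}}_{X,f}$ because enlarging $X$ to $\mathbb{R}^m_{+}$ only helps minimization and $F\le f$ lowers the objective). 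Hence the core is nonempty iff this weak inequality is tight, i.e.\ iff $\nu^{\text{cov}}_{X,f}(\mathbf{1}_n)=\nu^{\text{cov}}_{\mathbb{R}^m_{+},F}(\mathbf{1}_n)$; and when it is tight, $\Omega_1^{\text{cov}}=\Omega_2^{\text{cov}}$ together with the defining constraint $\mathbf{1}_n^Tx=\nu^{\text{cov}}_{X,f}(\mathbf{1}_n)$ identifies $\text{core}(\nu^{\text{cov}}_{X,f})$ precisely with $\argmax_{y\in\mathbb{R}^n_{+}}\{\mathbf{1}_n^Ty: y^TA\le(f(\mathbf{e}_1),\dots,f(\mathbf{e}_m))\}$, giving the displayed formula.

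The one genuinely delicate point, and the step I expect to be the main obstacle, is checking that the relaxed covering LP is actually well-posed — that the minimum in $\nu^{\text{cov}}_{\mathbb{R}^m_{+},F}(\mathbf{1}_n)$ is finite and attained, equivalently that $\Omega_2^{\text{cov}}$ is nonempty and that the primal covering LP is feasible. Feasibility of the primal needs $\{x\in\mathbb{R}^m_{+}: Ax\ge\mathbf{1}_n\}\ne\emptyset$, which follows since $A$ has no zero column (assumption (a)) so some positive combination of columns dominates $\mathbf{1}_n$; boundedness of the dual optimum (nonemptiness of $\Omega_2^{\text{cov}}$ being not the issue — $y=0$ lies in it once $f(\mathbf{e}_j)\ge 0$, which one should note follows from groundedness and individual superadditivity of $f$, or simply argue directly) and attainment then follow from standard LP theory. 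I would also remark, paralleling the footnote-style aside in the packing proof, that the semi-infinite description of $\Omega_1^{\text{cov}}$ reduces to the finite description of $\Omega_2^{\text{cov}}$, so no semi-infinite programming machinery is needed. Everything else is a sign-flipped transcription of the earlier argument, so the write-up can be kept short by explicitly invoking the parallel with the proof of Theorem~\ref{thm:nonlinear-packing-game}.
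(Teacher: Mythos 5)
Your proposal is correct and is essentially the paper's own argument: the paper proves only the packing case (Theorem~\ref{thm:nonlinear-packing-game}) and states that the covering result is ``readily available with almost the same argument,'' which is precisely the sign-flipped transcription you give (establish $\Omega_1^{\text{cov}}=\Omega_2^{\text{cov}}$ using $a=A\mathbf{e}_j$, $x=\mathbf{e}_j$ in one direction and $y\ge 0$, $x\ge 0$, individual superadditivity $f\ge F$ in the other, then close the loop with LP duality). The only slip is in your side remark on well-posedness: feasibility of the relaxed covering program $\{x\in\mathbb{R}^m_{+}:Ax\ge\mathbf{1}_n\}\ne\emptyset$ does \emph{not} follow from $A$ having no zero column (that condition excludes zero columns, not zero rows, and a zero row makes covering infeasible); it follows instead from the paper's implicit assumption that $\nu^{\text{cov}}_{X,f}(\mathbf{1}_n)\in\mathbb{R}$, i.e.\ the original covering program is feasible over $X\subseteq\mathbb{R}^m_{+}$, so the relaxation is feasible a fortiori.
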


\begin{thm}\label{thm:nonlinear-partition-game} (Partition game)
Under assumptions (a-b) and $f(x): X\mapsto \mathbb{R}$ is individually subadditive, then 
\[
\text{core}\left(\nu_{X, f}^{\text{ptn}}\right)\ne\emptyset \iff \nu_{X, f}^{\text{cov}}\left(\mathbf{1}_n\right)=\nu_{\mathbb{R}^m_{+}, F}^{\text{ptn}}\left(\mathbf{1}_n\right).
\]
Moreover, whenever non-empty, 
\[
\text{core}\left(\nu_{X, f}^{\text{cov}}\right)= \argmin_{y\in \mathbb{R}^n_{+}} \left[\mathbf{1}_n^Ty: y^TA\le (f(\mathbf{e}_1),\dots, f(\mathbf{e}_m)\right].
\]
\end{thm}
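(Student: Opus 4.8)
The plan is to mimic, essentially verbatim, the proof of Theorem~\ref{thm:nonlinear-packing-game}, with the inequality directions reversed to reflect that the partition game has equality constraints $Ax=w$ and hence an unconstrained-sign dual. First I would fix $w=\mathbf{1}_n$ throughout and introduce the two polyhedra
\[
\Omega_1:=\left\{y\in\mathbb{R}^n: a^Ty\ge \nu_{X,f}^{\text{ptn}}(a),\ \forall a\in\mathbb{B}^n\right\}
=\left\{y\in\mathbb{R}^n: a^Ty\ge f(x),\ \forall a\in\mathbb{B}^n,\ \forall x\in X \text{ with } Ax=a\right\},
\]
and $\Omega_2:=\left\{y\in\mathbb{R}^n: y^TA\le (f(\mathbf{e}_1),\dots,f(\mathbf{e}_m))\right\}$. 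Note that here, unlike the packing case, $y$ is \emph{not} required to be nonnegative — this is exactly the remark made in the proof of Theorem~\ref{thm:nonlinear-packing-game} that nonnegativity of $y$ is not needed for the partition game because the constraint is $Ax=w$ rather than $Ax\le w$. Hence also the inequality $y^TA\le(f(\mathbf{e}_1),\dots,f(\mathbf{e}_m))$ rather than $\ge$: the individual-subadditivity hypothesis on $f$ forces the relaxed partition objective $F(x)=(f(\mathbf{e}_1),\dots,f(\mathbf{e}_m))x$ to dominate $f$ on $X$, so to lower-bound $a^Ty$ by $f(x)$ on the feasible set one needs $y^TA$ below the coefficient vector.

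Next I would prove $\Omega_1=\Omega_2$. For $\Omega_1\subseteq\Omega_2$: given $j\in[m]$, take $a:=A\mathbf{e}_j\in\mathbb{B}^n$ and $x:=\mathbf{e}_j$, which is feasible since $\mathbf{e}_j\in X$ and $A\mathbf{e}_j=a$; then $y\in\Omega_1$ gives $(A\mathbf{e}_j)^Ty\ge f(\mathbf{e}_j)$ for every $j$, i.e.\ $y^TA\ge(f(\mathbf{e}_1),\dots,f(\mathbf{e}_m))$ — wait: this yields the \emph{reverse} inequality from the one defining $\Omega_2$. This sign issue is the crux and the one place the argument genuinely differs from the packing case, so I expect it to be the main obstacle: one must be careful that for the \emph{maximization} partition game the relevant dual LP is $\min_{y\in\mathbb{R}^n}[w^Ty: y^TA\le(f(\mathbf{e}_1),\dots,f(\mathbf{e}_m))]$ as written in the displayed relaxation just before the theorem, which comes from LP-dualizing $\max_{x\ge 0}[F(x): Ax=w]$ — and a careful check shows the constraint in that dual is indeed $y^TA\le(f(\mathbf{e}_1),\dots,f(\mathbf{e}_m))$ only if one is dualizing a \emph{covering}-type relaxation; for the genuine equality-constrained partition relaxation the dual feasibility is $y^TA\ge$ the cost vector. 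I would therefore first reconcile the statement's displayed dual with the correct direction (the theorem statement as printed contains a typo, writing $\nu^{\text{cov}}$ where it means $\nu^{\text{ptn}}$, and the inequality should match whichever convention makes $\Omega_1=\Omega_2$ hold), settling on: $\Omega_2=\{y: y^TA\ge(f(\mathbf{e}_1),\dots,f(\mathbf{e}_m))\}$, obtained from $\Omega_1$ by the column substitution above, and conversely, for any $a\in\mathbb{B}^n$ and any feasible $x\in X$ with $Ax=a$, $y\in\Omega_2$ gives $a^Ty=(Ax)^Ty=y^TAx\ge(f(\mathbf{e}_1),\dots,f(\mathbf{e}_m))x=F(x)\ge f(x)$, using $x\ge 0$ and individual subadditivity, hence $y\in\Omega_1$; crucially $y\ge 0$ is never invoked since $Ax=a$ turns the first step into an equality.

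Finally I would chain the standard inequalities exactly as in Theorem~\ref{thm:nonlinear-packing-game}:
\[
\nu_{X,f}^{\text{ptn}}(\mathbf{1}_n)\le \min_{y\in\mathbb{R}^n}\left\{\mathbf{1}_n^Ty: y\in\Omega_1\right\}
\overset{\Omega_1=\Omega_2}{=}\min_{y\in\mathbb{R}^n}\left\{\mathbf{1}_n^Ty: y\in\Omega_2\right\}
\overset{\text{LP duality}}{=}\nu_{\mathbb{R}^m_{+},F}^{\text{ptn}}(\mathbf{1}_n),
\]
where the first inequality is the standard observation that any core element is feasible for the covering-type LP and the value of any feasible dual solution lower-bounds $\nu^{\text{ptn}}(\mathbf{1}_n)$, and the last equality is LP strong duality applied to the equality-constrained relaxation $\max_{x\ge 0}[F(x): Ax=\mathbf{1}_n]$ (finiteness/feasibility following from assumptions (a),(b) exactly as argued for the packing game). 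Then $\text{core}(\nu_{X,f}^{\text{ptn}})\ne\emptyset$ iff the first inequality is an equality iff $\nu_{X,f}^{\text{ptn}}(\mathbf{1}_n)=\nu_{\mathbb{R}^m_{+},F}^{\text{ptn}}(\mathbf{1}_n)$, and the identification $\Omega_1=\Omega_2$ together with LP-optimality shows that in the non-empty case the core is precisely the set of dual optimizers $\argmin_{y\in\mathbb{R}^n}[\mathbf{1}_n^Ty: y^TA\ge(f(\mathbf{e}_1),\dots,f(\mathbf{e}_m))]$. I would flag explicitly in the write-up that the only substantive departures from the packing proof are (i) dropping the sign constraint on $y$, and (ii) getting the direction of the dual feasibility inequality right, which is the subtle point and the place a careless reader (or the current draft) slips.
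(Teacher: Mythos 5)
Your proposal is correct and follows exactly the route the paper intends: the paper gives no separate proof of Theorem~\ref{thm:nonlinear-partition-game}, saying only that it follows by ``almost the same argument'' as Theorem~\ref{thm:nonlinear-packing-game}, whose proof already contains the parenthetical remark that nonnegativity of $y$ is not needed for the partition game because the constraint is $Ax=w$ --- which is precisely your adaptation ($\Omega_1=\Omega_2$ with free $y$, dual feasibility $y^TA\ge (f(\mathbf{e}_1),\dots,f(\mathbf{e}_m))$, the same substitution $a=A\mathbf{e}_j$, $x=\mathbf{e}_j$, the chain of inequalities, and LP strong duality for the equality-constrained relaxation). Your diagnosis of the misprints in the stated theorem (writing $\nu^{\text{cov}}$ for $\nu^{\text{ptn}}$, the reversed dual inequality, and the spurious restriction $y\in\mathbb{R}^n_{+}$) is also right and consistent with the paper's own later use of the partition dual, e.g.\ in its Bondareva--Shapley discussion.
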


\subsection{$X$ with a finite number of generators}\label{subsec:gene-X-constraint-pack-game}

In the nonlinear game (\ref{game:packing-subadditive}), there are three assumptions in Theorem~\ref{thm:nonlinear-packing-game}: (i) the domain $X\subseteq \mathbb{R}^m_{+}$ includes $\{\mathbf{0}_m, \mathbf{e}_1,\dots, \mathbf{e}_m\}$; (ii) the objective function $f$ is individually subadditive; namely $f$ is dominated by its basis-linear relaxation $F$; and (iii) the constraint matrix $A\in \mathbb{B}^{n\times m}$ has zero-one entries without any zero column; or equivalently $A\mathbf{e}_j\in \mathbb{B}^n\backslash \{\mathbf{0}_n\}, \forall j\in [m]$. A closer look at the proof reveals that these assumptions can be extended to more general settings. 

If we replace these three assumptions as follows, then the proof of Theorem~\ref{thm:nonlinear-packing-game} will carry through analogously by substituting LP duality with conic LP duality. 
\begin{enumerate}[(i)]
\item The sole purpose of including $\mathbf{0}_m$ in $X$ is guarantee the groundness of the game $\nu_{X, f}(\mathbf{0}_n)=0$ (plus assumption $f(\mathbf{0}_m)=0)$. On the other hand, the set $\{\mathbf{e}_1,\dots, \mathbf{e}_m\}\subseteq X$ can be replaced by any set of independent generators $Q=\{q_1,\dots, q_k\}\subseteq X$ satisfying $\text{cone}(X)=\text{cone}(Q)$, where $k\le m$. 

Hence $Q=\{\mathbf{e}_1,\dots, \mathbf{e}_m\}$ is just a special set of independent generators with $\text{cone}(X)=\text{cone}(Q)=\text{cone}\left(\mathbf{e}_1,\dots, \mathbf{e}_m\right)=\mathbb{R}^m_{+}$, which is the domain we used earlier in the relaxed anchor game $\nu_{\mathbb{R}^m_{+}, F}$. Now the relaxed anchor game will be the more general $\nu_{\text{cone}(X), F}=\nu_{\text{cone}(Q), F}$ instead.

Note also independence in $Q$ and $\text{cone}(X)=\text{cone}(Q)$ together imply that any $x\in X\subseteq \mathbf{R}^m_{+}$ can be represented uniquely as the conic combination of the generators in $Q$ via the pseudo-inverse. Here is the construction. 

Let $Q\in\mathbb{R}^{m\times k} (k\le m)$ also represents the matrix whose independent columns are $q_1,\dots, q_k$. Then their independence means $Q$ has full column rank $k$. Hence $Q$ has the left Moore–Penrose inverse $Q^{\dagger}=(Q^TQ)^{-1}Q^T\in\mathbb{R}^{k\times m}$ such that $Q^{\dagger}Q=I_k$. We have the following unique representation for any $x\in X\subseteq \text{cone}(X)=\text{cone}(Q)\subseteq \mathbb{R}^m_{+}$ by using columns of $Q$ as the basis instead of the unit basis:
\begin{equation}\label{eq:x-rep-Q}
x=QQ^{\dagger}x, \text{ where } Q^{\dagger}x\ge 0.
\end{equation}
\item Individually subadditiveness is replaced by $Q$-\emph{individually subadditive} as follows:
 \begin{deff}\label{deff:indiv-supadd_Q} ($Q$-\textbf{individually subadditive}) Given a subset $X\subseteq \mathbb{R}^m_{+}$ such that $\text{cone}(X)=\text{cone}(Q)$ for some independent $Q=\{q_1,\dots, q_k\}\subseteq X$, a function $f: X\mapsto \mathbb{R}$ is $Q$-\emph{individually subadditive} if it satisfies:
\begin{equation*}
f(x)=f\left(QQ^{\dagger}x\right)\le \left(f(q_1),\dots, f(q_k)\right)Q^{\dagger}x
\end{equation*}
\end{deff}
Hence when $Q=\{\mathbf{e}_1,\dots, \mathbf{e}_m\}$, the matrix $Q=I_{m\times m}$ and it reduces to individually subadditiveness. 
\item $A\mathbf{e}_j\in \mathbb{B}^n\backslash \{\mathbf{0}_n\}, \forall j\in [m]$ is replaced with $Aq_j\in \mathbb{B}^n\backslash \{\mathbf{0}_n\}, \forall j\in [k]$.
\end{enumerate}

Similarly as before, we call the function on the right-hand-side of Definition~\ref{deff:indiv-supadd_Z^n} the $Q$-\emph{basis-linear} relaxation, which is well-defined for any function $f: X\mapsto \mathbb{R}$, not just subadditive functions.
\begin{deff}\label{deff:Q-basis-linear-relaxation} ($Q$-\textbf{basis-linear relaxation}) Given a function $X\ni x\mapsto f(x)\in\mathbb{R}$, its $Q$-basis-linear relaxation is given as follows: 
\[
\text{cone}(X)\ni x\mapsto F(x):=\left(f(q_1),\dots, f(q_k)\right)Q^{\dagger}x\in\mathbb{R}
\]
\end{deff}

We consider the following combinatorial packing game:
\begin{eqnarray}\label{game:packing-subadditive-const-Q-basis}
 \mathbb{B}^n\ni w\mapsto \nu_{X, f}(w)&:=&\max\limits_{x\in X}\left[f(x): Ax\le w\right]\in \mathbb{R},
\end{eqnarray}
with the same setting as in the game (\ref{game:packing-subadditive}) except for the constraint $X\subseteq \mathbb{B}^n$, there exists independent $Q=\{q_1,\dots, q_k\}\subseteq X$ such that $\text{cone}(X)=\text{cone}(Q)$, and the $Q$-individually subadditive function $f(x): \mathbb{B}^n\mapsto\mathbb{R}$ with $f(\mathbf{0}_m)=0$. Therefore we go back to the original setting when $q_i=\mathbf{e}_i, i\in [k]$.

The relaxed anchor game along with its conic LP dual are as follows via the $Q$-basis-linear relaxation as in Definition~\ref{deff:Q-basis-linear-relaxation}:
\begin{scriptsize}
\begin{eqnarray}
\label{eq:NLP-relax-primal-Q-basis}\mathbb{B}^n\ni w\mapsto \nu_{\text{cone}(X), F}\left(w\right)&=&\max\limits_{x\in \text{cone}(X)} \left[F(x): Ax\le w\right]\\
\nonumber&=&\max\limits_{x\in \mathbb{R}^m_{+}} \left[F(x): w-Ax\in\mathbb{R}^n_{+}, x\in \text{cone}(X)\right]\\
\nonumber&\overset{\text{Conic LP dual}}{=}&\min_{y\in\mathbb{R}^n}\left[w^Ty: -(Q^{\dagger})^T\left(f(q_1),\dots, f(q_k)\right)^T+A^Ty\in \left(\text{cone}(X)\right)^*, y\ge \left(\mathbb{R}^n_{+}\right)^*\right]\\
\label{eq:NLP-relax-dual-Q-basis} &=&\min_{y\in\mathbb{R}^n_{+}}\left[w^Ty: y^TAQ\ge \left(f(q_1),\dots, f(q_k)\right)\right],
\end{eqnarray}
where the last equality used the following facts:
\begin{eqnarray*}
\left(\text{cone}(X)\right)^*&=&\left(\text{cone}(Q)\right)^*=\left\{y\in\mathbb{R}^m: Q^Ty\ge 0\right\} \\
\left(\mathbb{R}^n_{+}\right)^*&=&\mathbb{R}^n_{+}
\end{eqnarray*}
\end{scriptsize}

Then the following result can be shown analogously, whose proof is in Appendix A. 
\begin{thm}\label{thm:nonlinear-packing-const-X-generators}
In game (\ref{game:packing-subadditive-const-Q-basis}), assume that (a) the function $f(x): \mathbb{B}^n\mapsto\mathbb{R}$ with $f(\mathbf{0}_m)=0$ is $Q$ individually subadditive as in Definition~\ref{deff:indiv-supadd_Z^n} and $F$ is the $Q$-basis linear relaxation of $f$ as in Definition~\ref{deff:Q-basis-linear-relaxation}; (b) there exists a set of independent generators $Q=\{q_1,\dots, q_k\}\subseteq X$ such that $\text{cone}(X)=\text{cone}(Q)$; and (c)  $Aq_j\in \mathbb{B}^n\backslash \{\mathbf{0}_n\}, \forall j\in [k]$. Then 
\begin{enumerate}[(i)]
\item The core of the game $\nu_{X, f}$ is non-empty if and only if $\nu_{X, f}\left(\mathbf{1}_n\right)=\nu_{\text{cone}(X), F}\left(\mathbf{1}_n\right)$ as in (\ref{game:packing-subadditive-const-Q-basis}) and (\ref{eq:NLP-relax-primal-Q-basis}), respectively, by choosing $w=\mathbf{1}_n$.
\item The core of $\nu_{X, f}$, whenever non-empty, coincides with the set of optimal solutions of the dual LP as in (\ref{eq:NLP-relax-dual-Q-basis}) by choosing $w=\mathbf{1}_n$; namely
 \[
 \text{core} \left(\nu_{X, f}\right)=\argmin\limits_{y\in \mathbb{R}^n_{+}} \left[\mathbf{1}_n^Ty: y^TAQ\ge \left(f(q_1),\dots, f(q_k)\right)\right]
 \]
\end{enumerate}
\end{thm}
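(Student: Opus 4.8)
The plan is to mirror the proof of Theorem~\ref{thm:nonlinear-packing-game}, with the unit-vector basis $\{\mathbf{e}_1,\dots,\mathbf{e}_m\}$ replaced by the general independent generating set $Q=\{q_1,\dots,q_k\}$ and ordinary LP duality replaced by conic LP duality for the cone $\cone(X)=\cone(Q)$. The engine of the argument is again the identification of two polyhedra: the (possibly semi-infinite) core-defining set
\[
\Omega_1:=\left\{y\in\mathbb{R}^n_{+}: a^Ty\ge f(x),\ \forall x\in\{x\in X: Ax\le a\},\ \forall a\in\mathbb{B}^n\right\}
\]
and the finitely-constrained set
\[
\Omega_2:=\left\{y\in\mathbb{R}^n_{+}: y^TAQ\ge \left(f(q_1),\dots,f(q_k)\right)\right\}.
\]
First I would check non-emptiness and closedness of both: $\Omega_1\neq\emptyset$ because $f$ is real-valued and $f(\mathbf{0}_m)=0$, and $\Omega_2\neq\emptyset$ because each $Aq_j\in\mathbb{B}^n\setminus\{\mathbf{0}_n\}$ (assumption (c)), so large $y$ is feasible; both are intersections of closed half-spaces.

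Next I would prove $\Omega_1=\Omega_2$. For $\Omega_1\subseteq\Omega_2$: taking $a=Aq_j\in\mathbb{B}^n$ and $x=q_j\in X$ (legal since $q_j\in X$ and $Aq_j\le Aq_j$) yields $(Aq_j)^Ty\ge f(q_j)$ for all $j\in[k]$, i.e. $y^TAQ\ge(f(q_1),\dots,f(q_k))$. For $\Omega_2\subseteq\Omega_1$: fix $a\in\mathbb{B}^n$ and $x\in X$ with $Ax\le a$; using the unique conic representation $x=QQ^{\dagger}x$ with $Q^{\dagger}x\ge 0$ from (\ref{eq:x-rep-Q}), and $y\ge 0$,
\[
a^Ty\ \ge\ (Ax)^Ty\ =\ (AQQ^{\dagger}x)^Ty\ =\ (y^TAQ)(Q^{\dagger}x)\ \ge\ \left(f(q_1),\dots,f(q_k)\right)Q^{\dagger}x\ =\ F(x)\ \ge\ f(x),
\]
where the last inequality is exactly $Q$-individual subadditivity (Definition~\ref{deff:indiv-supadd_Q}). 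This gives $y\in\Omega_1$.

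With $\Omega_1=\Omega_2$ in hand, I would close the argument by the same chain as before:
\[
\nu_{X,f}(\mathbf{1}_n)\ \le\ \min_{y\in\mathbb{R}^n_{+}}\left\{\mathbf{1}_n^Ty: a^Ty\ge\nu_{X,f}(a),\ \forall a\in\mathbb{B}^n\right\}\ =\ \min_{y}\left[\mathbf{1}_n^Ty: y\in\Omega_1\right]\ =\ \min_{y}\left[\mathbf{1}_n^Ty: y\in\Omega_2\right],
\]
and the last quantity equals $\nu_{\cone(X),F}(\mathbf{1}_n)$ by the conic-LP duality computation already recorded in (\ref{eq:NLP-relax-primal-Q-basis})--(\ref{eq:NLP-relax-dual-Q-basis}), using $(\cone(Q))^*=\{y: Q^Ty\ge 0\}$ and $(\mathbb{R}^n_{+})^*=\mathbb{R}^n_{+}$. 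The first inequality above is an equality exactly when the core is non-empty (a core allocation is a feasible point of the min whose value meets the lower bound $\nu_{X,f}(\mathbf{1}_n)$, and conversely), which gives (i); and $\Omega_1=\Omega_2$ together with the dual characterization gives (ii), since then $\text{core}(\nu_{X,f})=\{y\in\Omega_2:\mathbf{1}_n^Ty=\nu_{X,f}(\mathbf{1}_n)\}=\argmin_{y\in\Omega_2}\mathbf{1}_n^Ty$.

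**Main obstacle.** The one genuinely new ingredient relative to Theorem~\ref{thm:nonlinear-packing-game} is verifying that conic LP strong duality applies without a gap and that the dual cone bookkeeping in (\ref{eq:NLP-relax-primal-Q-basis})--(\ref{eq:NLP-relax-dual-Q-basis}) is correct — in particular that the primal is feasible and bounded (feasibility from $\mathbf{0}_m\in X\subseteq\cone(X)$; boundedness from $Aq_j\neq\mathbf{0}_n$, which forbids a recession direction of the feasible region on which $F$ increases), so that Slater-type or polyhedral-conic strong duality yields attainment on both sides. The rest is a routine transcription of the linear-basis proof, with the pseudo-inverse identity $x=QQ^{\dagger}x$ doing the work that $x=I_m x$ did before.
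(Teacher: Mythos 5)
Your proposal is correct and follows essentially the same route as the paper's own proof in Appendix B: the same sets $\Omega_1$ and $\Omega_2$, the same choices $a=Aq_j$, $x=q_j$ for one inclusion, the same use of $x=QQ^{\dagger}x$ with $Q^{\dagger}x\ge 0$ and $Q$-individual subadditivity for the other, and the same closing chain through the conic-dual pair (\ref{eq:NLP-relax-primal-Q-basis})--(\ref{eq:NLP-relax-dual-Q-basis}). Your added attention to feasibility, boundedness, and attainment in the conic duality step is a reasonable elaboration of what the paper leaves implicit, but it does not change the argument.
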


Theorem~\ref{thm:nonlinear-packing-const-X-generators} includes \citep[Theorem 2.1]{goemans2004cooperative} for the facility location games as a special case, where only linear objective function and special constraint matrix $A$ are considered.

\subsection{$X$ dependent on $w$}\label{subsec:gene-constraint-pack-game}
We consider the following combinatorial packing game:
\begin{eqnarray}\label{game:packing-subadditive-const-dep-w}
 \mathbb{B}^n\ni w\mapsto \nu_{X, f}(w)&:=&\max\limits_{x\in X(w)}\left[f(x): Ax\le w\right]\in \mathbb{R},
\end{eqnarray}
with the same setting as in the game (\ref{game:packing-subadditive}) except the constraint $X(w)\subseteq \mathbb{B}^n$ also depends on the players such that $\mathbf{0}_m\in X(0), \mathbf{e}^m_i\in X(\mathbf{e}^n_i), i\in [n]$ and the function $f(x): \mathbb{B}^n\mapsto\mathbb{R}$ with $f(\mathbf{0}_m)=0$.
Then the LP relaxed game along with its LP dual as follows via the basis-linear relaxation as in Definition~\ref{deff:basis-linear-relaxation}:
\begin{eqnarray}
\label{eq:NLP-relax-primal-const-dep-w}\mathbb{B}^n\ni w\mapsto \nu_{\mathbb{R}^m_{+}, F}\left(w\right)&=&\max\limits_{x\in \mathbb{R}^m_{+}} \left[F(x): Ax\le w\right]\\
\label{eq:NLP-relax-dual-const-dep-w}&\overset{\text{LP dual}}{=}&\min_{y\in\mathbb{R}^n_{+}}\left[w^Ty: y^TA\ge (f(\mathbf{e}_1),\dots, f(\mathbf{e}_m))\right]
\end{eqnarray}

Then the following result can be shown analogously, whose proof is in Appendix B. 
\begin{thm}\label{thm:nonlinear-packing-const-dep-w}
In game (\ref{game:packing-subadditive-const-dep-w}), assume that the function $f(x): \mathbb{B}^n\mapsto\mathbb{R}$ with $f(\mathbf{0}_m)=0$  is individually subadditive as in Definition~\ref{deff:indiv-supadd_Z^n}, and $X(w)\subseteq \mathbb{B}^n$ satisfies that $\mathbf{0}_m\in X(w), \forall w\in \mathbb{B}^n,  \mathbf{e}^m_i\in X(\mathbf{e}^n_i), i\in [n]$.  
\begin{enumerate}[(i)]
\item The core of the game $\nu_{X, f}$ is non-empty if and only if $\nu_{X, f}\left(\mathbf{1}_n\right)=\nu_{\mathbb{R}^m_{+}, F}\left(\mathbf{1}_n\right)$ as in (\ref{eq:NLP-relax-primal-const-dep-w}) and (\ref{eq:NLP-relax-dual-const-dep-w}), respectively, by choosing $w=\mathbf{1}_n$.
\item The core of $\nu_{X, f}$, whenever non-empty, coincides with the set of optimal solutions of the dual LP as in (\ref{eq:NLP-relax-dual-const-dep-w}) by choosing $w=\mathbf{1}_n$; namely
 \[
 \text{core} \left(\nu_{X, f}\right)=\argmin\limits_{y\in \mathbb{R}^n_{+}} \left[\mathbf{1}_n^Ty: y^TA\ge (f(\mathbf{e}_1),\dots, f(\mathbf{e}_m))\right]
 \]
\end{enumerate}
\end{thm}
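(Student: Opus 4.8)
The plan is to mimic the proof of Theorem~\ref{thm:nonlinear-packing-game} almost verbatim, tracking carefully where the $w$-dependence of the feasible set $X(w)$ enters and showing that it is harmless. As before, the heart of the argument is the identification of the polyhedron
\[
\Omega_1:=\left\{y\in\mathbb{R}^n_{+}: a^Ty\ge \nu_{X,f}(a),\ \forall a\in\mathbb{B}^n\right\}
=\left\{y\in\mathbb{R}^n_{+}: a^Ty\ge f(x),\ \forall a\in\mathbb{B}^n,\ \forall x\in X(a)\text{ with }Ax\le a\right\}
\]
with the finitely-described polyhedron
\[
\Omega_2:=\left\{y\in\mathbb{R}^n_{+}: y^TA\ge (f(\mathbf{e}_1),\dots,f(\mathbf{e}_m))\right\}.
\]
Once $\Omega_1=\Omega_2$ is established, LP duality applied to $\Omega_2$ gives
\[
\nu_{X,f}(\mathbf{1}_n)\le \min_{y\in\mathbb{R}^n_{+}}\left[\mathbf{1}_n^Ty: y\in\Omega_1\right]=\min_{y\in\mathbb{R}^n_{+}}\left[\mathbf{1}_n^Ty: y\in\Omega_2\right]=\nu_{\mathbb{R}^m_{+},F}(\mathbf{1}_n),
\]
and the core is non-empty exactly when this is an equality; moreover $\Omega_1=\Omega_2$ together with the definition of the core immediately yields (ii). So the whole theorem reduces to the set equality $\Omega_1=\Omega_2$.

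For the inclusion $\Omega_1\subseteq\Omega_2$: here is where the weakened assumption on $X(w)$ is used. Taking $a=A\mathbf{e}_j\in\mathbb{B}^n\setminus\{\mathbf{0}_n\}$ for $j\in[m]$, we need $\mathbf{e}_j$ to be feasible for $\nu_{X,f}(A\mathbf{e}_j)$; the column $A\mathbf{e}_j$ is a sum of some unit vectors $\mathbf{e}^n_{i}$, and the monotonicity-type hypothesis $\mathbf{e}^m_i\in X(\mathbf{e}^n_i)$ is intended to force $\mathbf{e}^m_j\in X(A\mathbf{e}_j)$; I would either assume $X(\cdot)$ is monotone in the obvious sense or state the hypothesis so that $\mathbf{e}^m_j\in X(A\mathbf{e}_j)$ holds directly, since the proof of Theorem~\ref{thm:nonlinear-packing-game} uses precisely that $\mathbf{e}_j\in\{x\in X: Ax\le A\mathbf{e}_j\}$. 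Given that, $y\in\Omega_1$ forces $(A\mathbf{e}_j)^Ty\ge f(\mathbf{e}_j)$ for all $j$, i.e.\ $y^TA\ge(f(\mathbf{e}_1),\dots,f(\mathbf{e}_m))$, so $y\in\Omega_2$. For the reverse inclusion $\Omega_2\subseteq\Omega_1$: for any $a\in\mathbb{B}^n$ and any $x\in X(a)$ with $Ax\le a$, since $y\ge 0$ and $x\ge 0$,
\[
a^Ty\ge y^TAx\ge (f(\mathbf{e}_1),\dots,f(\mathbf{e}_m))x=F(x)\ge f(x),
\]
using individual subadditivity in the last step; so $y\in\Omega_1$. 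Note that $\Omega_2$ does not see $X(w)$ at all — only the $\mathbf{e}_j\in X(\mathbf{e}^n_j)$ anchoring matters — which is exactly why the characterization survives the generalization.

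The main obstacle, as in the base case, is purely the set equality $\Omega_1=\Omega_2$, and within that the only genuinely new subtlety is the first inclusion: confirming that the single-coordinate hypothesis $\mathbf{e}^m_i\in X(\mathbf{e}^n_i)$ (possibly plus some monotonicity of $X(\cdot)$) suffices to put $\mathbf{e}^m_j$ into $X(A\mathbf{e}_j)$ when $A\mathbf{e}_j$ has several ones. If one does not want to assume monotonicity of $X(\cdot)$, the cleanest fix is to strengthen assumption to ``$\mathbf{e}^m_j\in X(a)$ whenever $a\ge A\mathbf{e}_j$'', which is all the proof consumes. Everything else — non-emptiness of $\Omega_1,\Omega_2$ (large $y$ works, using $f(\mathbf{0}_m)=0$ and no zero column of $A$), the reducibility of the possibly semi-infinite $\Omega_1$ to the finite $\Omega_2$, and the final duality chain — is identical to the proof of Theorem~\ref{thm:nonlinear-packing-game} and can be cited rather than repeated.
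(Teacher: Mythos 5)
Your proposal follows the paper's proof essentially verbatim: establish $\Omega_1=\Omega_2$ (taking $a=A\mathbf{e}_j$, $x=\mathbf{e}_j$ for one inclusion, and using $y\ge 0$, $x\ge 0$ together with individual subadditivity for the other), then obtain (i) from the LP-duality chain and (ii) from the set equality. The subtlety you flag is real but is present in the paper as well: its proof simply asserts $\mathbf{e}_j\in\{x\in X(w): Ax\le A\mathbf{e}_j\}$ from the stated hypothesis $\mathbf{e}^m_i\in X(\mathbf{e}^n_i)$, which, exactly as you observe, only suffices when the columns of $A$ are unit vectors (e.g., $A=I_n$) or under a monotonicity assumption on $X(\cdot)$ or your strengthened hypothesis $\mathbf{e}^m_j\in X(a)$ whenever $a\ge A\mathbf{e}_j$.
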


\subsection{More general right-hand side}\label{subsec:gene-nl-pack-game}

The nonlinear games requires the right-hand side of the constraint to be a binary vector, which prevents us from attacking the capacitated version of combinatorial games, such as the maximum flow game with common capacity $b\in\mathbb{R}_{++}$ and the bipartite $b$-matching problem, treated in \citep{vazirani2023lp} who show that the cores of these games can also be fully characterized. 

The rest of this section will be devoted to generalize these particular games. 

We consider the following nonlinear game with a slightly more general right-hand side, which may take any positive value $b\in \mathbb{R}_{++}$ and zero rather than just one and zero. The following game is the same as game (\ref{game:packing-subadditive}) expect we replace the right hand side by $bw$. 
\begin{eqnarray}
\label{game:packing-subadditive-rhs}\mathbb{B}^n\ni w\mapsto \nu_{X, f}(w):=\max\limits_{x\in X} \left[f(x): Ax\le bw\right]\in  \mathbb{R},
\end{eqnarray}
with the same setting as in the game (\ref{game:packing-subadditive}) except the domain $X$ satisfies $\{\mathbf{0}_n, b\mathbf{e}_1,\dots, b\mathbf{e}_m\}\subseteq X\subseteq \mathbb{R}^m$, and the right-hand side of the constraint is $bw$ instead of $w$.

We extend the concept of individually subadditive to $b$-\emph{individually subadditive}.
\begin{deff}\label{deff:indiv-supadd_Z^n} Given a positive number $b\in\mathbb{R}_{++}$ and a subset $\{b\mathbf{e}_1,\dots, b\mathbf{e}_m\}\subseteq X\subseteq \mathbb{R}^m$, a function $f: X\mapsto \mathbb{R}$ is $b$-\emph{individually subadditive} if it satisfies:
\begin{equation*}
f(x)\le b^{-1}\left(x_1f(b\mathbf{e}_1)+\dots+x_mf(b\mathbf{e}_m)\right), \forall x\in X
\end{equation*}
\end{deff}
A function $f$ is $b$-\emph{individually superadditive} if and only if $-f$ is $b$-\emph{individually subadditive}. Evidently linear functions are both $b$-\emph{individually subadditive} and $b$-\emph{individually superadditive}, and hence can also be equivalently called $b$-\emph{individually additive} or just additive.

Similarly as before, we call the function on the right-hand-side of Definition~\ref{deff:indiv-supadd_Z^n} the $b$-\emph{basis-linear} relaxation, which is well-defined for any function $f: X\mapsto \mathbb{R}^m$, not just  subadditive functions.
\begin{deff}\label{deff:basis-linear-relaxation} ($b$-\textbf{basis-linear relaxation}) Given a function $\mathbb{R}^m\ni x\mapsto f(x)\in\mathbb{R}$, its $b$-basis-linear relaxation is given as follows: $\forall b\in\mathbb{R}_{++}$, 
\[
\mathbb{R}^m\ni x\mapsto F(x):=b^{-1}(f(b\mathbf{e}_1),\dots, f(b\mathbf{e}_m))x\in\mathbb{R}
\]
\end{deff}

Then the LP relaxed game along with its LP dual as follows via the $b$-basis-linear relaxation as in Definition~\ref{deff:basis-linear-relaxation}:
\begin{eqnarray}
\label{eq:NLP-relax-primal-rhs}\mathbb{B}^n\ni w\mapsto \nu_{\mathbb{R}^m_{+}, F}\left(w\right)&=&\max\limits_{x\in \mathbb{R}^m_{+}} \left[F(x): Ax\le bw\right]\\
\nonumber&\overset{\text{LP dual}}{=}&\min_{z\in\mathbb{R}^n_{+}}\left[bw^Tz: z^TA\ge b^{-1}(f(b\mathbf{e}_1),\dots, f(b\mathbf{e}_m))\right] \\
\label{eq:NLP-relax-dual-rhs}&\overset{y:=bz}{=}&\min_{y\in\mathbb{R}^n_{+}}\left[w^Ty: y^TA\ge (f(b\mathbf{e}_1),\dots, f(b\mathbf{e}_m))\right]
\end{eqnarray}

Note that we choose to define the dual program DLP($w$) after variable transform in the $y$-variables, which will directly correspond to core members as shown shortly. This may differ from the treatment in the existing literature where the dual program is referred to the one before the variable transform in the $z$-variables, and core members correspond to $bz$.

Then the following result can be shown analogously, whose proof is in Appendix C. 
\begin{thm}\label{thm:nonlinear-packing-rhs-arb}
In game (\ref{game:packing-subadditive-rhs}), assume that $f(x)$ is $b$-individually subadditive as in Definition~\ref{deff:indiv-supadd_Z^n}. 
\begin{enumerate}[(i)]
\item The core of the game $\nu_{X, f}$ is non-empty if and only if $\nu_{X, f}\left(\mathbf{1}_n\right)=\nu_{\mathbb{R}^m_{+}, F}\left(\mathbf{1}_n\right)$ as in (\ref{game:packing-subadditive-rhs}) and (\ref{eq:NLP-relax-primal-rhs}), respectively, by choosing $w=\mathbf{1}_n$.
\item The core of $\nu_{X, f}$, whenever non-empty, coincides with the set of optimal solutions of the dual LP as in (\ref{eq:NLP-relax-dual-rhs}) by choosing $w=\mathbf{1}_n$; namely
 \[
 \text{core} \left(\nu_{X, f}\right)=\argmin\limits_{y\in \mathbb{R}^n_{+}} \left[\mathbf{1}_n^Ty: y^TA\ge (f(\mathbf{e}_1),\dots, f(\mathbf{e}_m))\right]
 \]
\item For nonlinear combinatorial game where $X=\mathbb{B}^m$, whenever the core is non-empty, the LP, as in (\ref{eq:NLP-relax-primal-rhs}) by choosing $w=\mathbf{1}_n$, has an integer optimal solution. 
\end{enumerate}
\end{thm}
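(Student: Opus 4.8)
The plan is to reduce the generalized game~(\ref{game:packing-subadditive-rhs}) to the already-resolved game~(\ref{game:packing-subadditive}) by the linear change of variables $x=b\tilde x$, and then read off (i)--(iii) from Theorem~\ref{thm:nonlinear-packing-game}. Since $b>0$, the map $x\mapsto \tilde x:=b^{-1}x$ is a bijection; set $\tilde X:=b^{-1}X$ and $\tilde f(\tilde x):=f(b\tilde x)$. Then $\{\mathbf{0}_m,\mathbf{e}_1,\dots,\mathbf{e}_m\}\subseteq \tilde X$ because $\{\mathbf{0}_m,b\mathbf{e}_1,\dots,b\mathbf{e}_m\}\subseteq X$; $\tilde f(\mathbf{0}_m)=f(\mathbf{0}_m)=0$, so $\tilde f$ is grounded; and the constraint $Ax\le bw$ becomes $A\tilde x\le w$, so $\nu_{X,f}(w)=\nu_{\tilde X,\tilde f}(w)$ for every $w\in\mathbb{B}^n$. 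In particular the two games have the same characteristic function, hence the same core, and $A$ is untouched so assumption (a) carries over verbatim.

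Next I would check that $b$-individual subadditivity of $f$ is precisely individual subadditivity of $\tilde f$ in the sense of~(\ref{eq:ind-sub-rhs}): substituting $x=b\tilde x$ into the inequality of Definition~\ref{deff:indiv-supadd_Z^n} and dividing by $b$ gives $\tilde f(\tilde x)\le (\tilde f(\mathbf{e}_1),\dots,\tilde f(\mathbf{e}_m))\tilde x$, using $\tilde f(\mathbf{e}_j)=f(b\mathbf{e}_j)$. Thus the hypotheses (a)--(c) of Theorem~\ref{thm:nonlinear-packing-game} hold for $\nu_{\tilde X,\tilde f}$, its basis-linear relaxation is $\tilde F(\tilde x)=(f(b\mathbf{e}_1),\dots,f(b\mathbf{e}_m))\tilde x$, and the same substitution $x=b\tilde x$ identifies $\nu_{\mathbb{R}^m_{+},F}(w)$ of~(\ref{eq:NLP-relax-primal-rhs}) with $\nu_{\mathbb{R}^m_{+},\tilde F}(w)$; applying LP duality to the latter and undoing the scaling $z:=b^{-1}y$ reproduces exactly the dual in~(\ref{eq:NLP-relax-dual-rhs}).

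With this dictionary, (i) and (ii) are immediate: core non-emptiness of $\nu_{X,f}$ is equivalent to that of $\nu_{\tilde X,\tilde f}$, which by Theorem~\ref{thm:nonlinear-packing-game}(i) is equivalent to $\nu_{\tilde X,\tilde f}(\mathbf{1}_n)=\nu_{\mathbb{R}^m_{+},\tilde F}(\mathbf{1}_n)$, i.e.\ $\nu_{X,f}(\mathbf{1}_n)=\nu_{\mathbb{R}^m_{+},F}(\mathbf{1}_n)$; and by Theorem~\ref{thm:nonlinear-packing-game}(ii) the core, when non-empty, equals $\argmin_{y\in\mathbb{R}^n_{+}}[\mathbf{1}_n^Ty: y^TA\ge (f(b\mathbf{e}_1),\dots,f(b\mathbf{e}_m))]$, the argmin set of~(\ref{eq:NLP-relax-dual-rhs}). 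For (iii), the hypothesis $X=\mathbb{B}^m$ forces $b=1$ (so that $b\mathbf{e}_j\in\mathbb{B}^m$), whence $\tilde X=\mathbb{B}^m$ and Theorem~\ref{thm:nonlinear-packing-game}(iii) directly supplies an integer optimizer of the relaxed LP; for the $b$-scaled lattice $X=b\,\mathbb{B}^m$ one instead gets the optimizer $x^\star=b\tilde x^\star\in\{0,b\}^m$ from $\tilde x^\star\in\mathbb{B}^m$. An entirely equivalent route, which is what an ``analogous proof'' amounts to, is to rerun the argument of Theorem~\ref{thm:nonlinear-packing-game} with the factor $b$ inserted, the crux again being the finite reduction $\Omega_1=\{y\ge 0: a^Ty\ge\nu_{X,f}(a)\ \forall a\in\mathbb{B}^n\}=\{y\ge 0: y^TA\ge (f(b\mathbf{e}_1),\dots,f(b\mathbf{e}_m))\}=\Omega_2$, where $\Omega_1\subseteq\Omega_2$ follows by testing $a=A\mathbf{e}_j$ (for which $b\mathbf{e}_j$ is feasible in~(\ref{game:packing-subadditive-rhs})) and $\Omega_2\subseteq\Omega_1$ by the chain $a^Ty=b^{-1}(ba)^Ty\ge b^{-1}(Ax)^Ty\ge b^{-1}(f(b\mathbf{e}_1),\dots,f(b\mathbf{e}_m))x=F(x)\ge f(x)$ on feasible $x\ge 0$.

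There is no real obstacle here; the only thing to be careful about is bookkeeping of the scaling constant across the primal constraint $Ax\le bw$, the relaxation $F$, and the two presentations of the dual (the $z$-variables before, the $y=bz$ variables after the transform). One should also read the domain condition as $X\subseteq\mathbb{R}^m_{+}$ (equivalently $\tilde X\subseteq\mathbb{R}^m_{+}$), since the $\Omega_2\subseteq\Omega_1$ step needs $x\ge 0$, exactly as in the proof of Theorem~\ref{thm:nonlinear-packing-game}; everything else is routine and, as there, no semi-infinite-programming machinery is needed because the potentially infinite system $\Omega_1$ is reducible to the finite $\Omega_2$.
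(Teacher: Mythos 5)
Your proposal is correct, but your primary route differs from the paper's. The paper (Appendix D) reruns the argument of Theorem~\ref{thm:nonlinear-packing-game} in the original variables: it shows directly that $\Omega_1=\{y\ge 0:\, w^Ty\ge \nu_{X,f}(w)\ \forall w\in\mathbb{B}^n\}$ equals $\Omega_2=\{y\ge 0:\, y^TA\ge (f(b\mathbf{e}_1),\dots,f(b\mathbf{e}_m))\}$ (testing $x=b\mathbf{e}_j$, $w=A\mathbf{e}_j$ for one inclusion and using $y\ge0$, $x\ge0$ and $b$-individual subadditivity for the other), then chains the core constraints through LP duality and the transform $y:=bz$. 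You instead rescale once and for all, $x=b\tilde x$, $\tilde X=b^{-1}X$, $\tilde f(\tilde x)=f(b\tilde x)$, verify that $b$-individual subadditivity of $f$ is exactly individual subadditivity of $\tilde f$ and that the relaxed LP and its dual transform into (\ref{eq:NLP-relax-primal-rhs})--(\ref{eq:NLP-relax-dual-rhs}), and then quote Theorem~\ref{thm:nonlinear-packing-game} verbatim; your closing sketch of the ``rerun'' route reproduces the paper's $\Omega_1=\Omega_2$ argument essentially line for line. The reduction buys economy and makes transparent that nothing beyond a change of scale is involved, while the paper's direct proof keeps the bookkeeping of $b$ explicit (in particular where the dual transform $y=bz$ enters), which is what its statement of (\ref{eq:NLP-relax-dual-rhs}) is keyed to. Two observations you make are worth keeping: the domain must be read as $X\subseteq\mathbb{R}^m_{+}$ (the paper's setup says $\mathbb{R}^m$ but its proof uses $x\ge0$), and in (iii) the literal hypothesis $X=\mathbb{B}^m$ together with $b\mathbf{e}_j\in X$ forces $b=1$, so your case distinction (with the scaled lattice giving an optimizer in $\{0,b\}^m$) is a sensible reading; note also that the constraint in your statement of (ii), with $f(b\mathbf{e}_j)$, is the one consistent with (\ref{eq:NLP-relax-dual-rhs}), the $f(\mathbf{e}_j)$ in the theorem's display being a typo.
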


Because linear function is $b$-individually subadditive (actually $b$-additive), Theorem~\ref{thm:nonlinear-packing-rhs-arb} can be applied to imply immediately the full characterization of the core for the uniform bipartite $b$-matching game considered in \citep[Theorem 9]{vazirani2023lp}, which includes the bipartite $1$-matching game \citep[Theorem 3]{deng1999algorithmic} as a special case. 

Moreover, as a bonus, if we specialize the maximum flow game in \citep[Section 8]{vazirani2023lp} to the $b$-uniform capacity case, then Theorem~\ref{thm:nonlinear-packing-rhs-arb} can again be applied to offer a full characterization of the core for the $b$-uniform capcaitated maximum flow game, which includes the $1$-uniform capcaitated maximum flow game \citep[Section 3.1]{deng1999algorithmic} as a special case. .

\subsection{More general objective function}\label{subsec:general-obj}
We consider the following packing game:
\begin{eqnarray}\label{game:packing-subadditive-obj}
 \mathbb{B}^n\ni w\mapsto \nu_{X, f}(w)&:=&\max\limits_{x\in X}\left[f(x, w): Ax\le w\right]\in \mathbb{R},
\end{eqnarray}
with the same setting as in the game (\ref{game:packing-subadditive}) except the function $f(x, w): X\times \mathbb{B}^n\mapsto\mathbb{R}$ with $f(\mathbf{0}_m, \mathbf{0}_n)=0$ also depends on the players. 

We extend the concept of individually subadditive to $A$-\emph{individually subadditive}.
\begin{deff}\label{def:matrix-subadditive} (\textbf{$A$-individually subadditive}) Given $X\subseteq \mathbb{R}^m$, a function $f(x, w): X\times \mathbb{B}^n\mapsto \mathbb{R}$ with $f(\mathbf{0}_m, \mathbf{0}_n)=0$ is $A$-individually subadditive for a given matrix $A\in\mathbb{B}^{n\times m}$, if 
\[
f(x,w)\le \left(f\left(\mathbf{e}_1, A\mathbf{e}_1\right),\dots, f\left(\mathbf{e}_m, A\mathbf{e}_m\right)\right)x, \forall (x,w)\in X\times \mathbb{B}^n.
\]
\end{deff}

With this concept, we call the function on the right-hand-side of Definition~\ref{def:matrix-subadditive} the $A$-\emph{basis-linear} relaxation, which is well-defined for any function $f: X\times \mathbb{B}^m \mapsto \mathbb{R}$, not just $A$-individually subadditive functions.
\begin{deff}\label{deff:basis-linear-relaxation} ($A$-\textbf{basis-linear relaxation}) Given $X\subseteq \mathbb{R}^m$, and a function $f: X\times \mathbb{B}^n\mapsto \mathbb{R}$, its basis-linear relaxation $F$ is given as follows:
\[
X\times \mathbb{B}^m\ni (x, w)\mapsto F(x,w):=\left(f\left(\mathbf{e}_1, A\mathbf{e}_1\right),\dots, f\left(\mathbf{e}_m, A\mathbf{e}_m\right)\right)x\in\mathbb{R}
\]
\end{deff}

With the $A$-basis-linear relaxation, we introduce the following relaxed game along with its dual:
\begin{eqnarray}
\label{eq:NLP-relax-primal-obj}
\mathbb{B}^n\ni w\mapsto \nu_{\mathbb{R}^m_{+}, F}(w)&:=&\max\limits_{x\in \mathbb{R}^m_{+}} \left[F(x,w): Ax\le w\right] \\
\label{eq:NLP-relax-dual-obj}&\overset{\text{LP duality}}{=}&\min\limits_{y\in \mathbb{R}^n_{+}} \left[w^Ty: y^TA\ge \left(f\left(\mathbf{e}_1, A\mathbf{e}_1\right),\dots, f\left(\mathbf{e}_m, A\mathbf{e}_m\right)\right)\right]
\end{eqnarray}

Now we have the following result, whose proof is in Appendix D. 
\begin{thm}\label{thm:nonlinear-packing-game-obj-constraint}
In game (\ref{game:packing-subadditive-obj}), assume that $f(x, w)$ is $A$-individually subadditive as in Definition~\ref{def:matrix-subadditive} and monotonically increasing in $w$. 
\begin{enumerate}[(i)]
\item The core of the game $\nu_{X, f}$ is non-empty if and only if $\nu_{X, f}\left(\mathbf{1}_n\right)=\nu_{\mathbb{R}^m_{+}, F}\left(\mathbf{1}_n\right)$ as in (\ref{game:packing-subadditive-obj}) and (\ref{eq:NLP-relax-primal-obj}), respectively, by choosing $w=\mathbf{1}_n$.

\item The core of $\nu_{X, f}$, whenever non-empty, coincides with the set of optimal solutions of the dual LP as in (\ref{eq:NLP-relax-dual-obj}) by choosing $w=\mathbf{1}_n$; namely
 \[
 \text{core} \left(\nu_{X, f}\right)=\argmin\limits_{y\in \mathbb{R}^n_{+}} \left[\mathbf{1}_n^Ty: y^TA\ge (f(\mathbf{e}_1),\dots, f(\mathbf{e}_m))\right]
 \]
\item For nonlinear combinatorial game where $X=\mathbb{B}^m$, whenever the core is non-empty, the LP, as in (\ref{eq:NLP-relax-primal-obj}) by choosing $w=\mathbf{1}_n$, has an integer optimal solution. 
\end{enumerate}
\end{thm}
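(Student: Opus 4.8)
The plan is to follow the proof of Theorem~\ref{thm:nonlinear-packing-game} line for line in structure, with the $A$-basis-linear relaxation $F$ (Definition~\ref{deff:basis-linear-relaxation}) in place of the basis-linear relaxation and $A$-individual subadditivity (Definition~\ref{def:matrix-subadditive}) in place of individual subadditivity; the one genuinely new ingredient is the hypothesis that $f(x,w)$ is monotonically increasing in $w$, which must be invoked to recover structural properties of the game that were automatic in the earlier setting. First I would check that $\nu_{X,f}$ in (\ref{game:packing-subadditive-obj}) is grounded and non-negative: since $A$ has no zero column and $X\subseteq\mathbb{R}^m_{+}$, the constraint $Ax\le\mathbf{0}_n$ forces $x=\mathbf{0}_m$, so $\nu_{X,f}(\mathbf{0}_n)=f(\mathbf{0}_m,\mathbf{0}_n)=0$; and for each $i\in[n]$, feasibility of $\mathbf{0}_m$ together with monotonicity of $f$ in $w$ gives $\nu_{X,f}(\mathbf{e}_i)\ge f(\mathbf{0}_m,\mathbf{e}_i)\ge f(\mathbf{0}_m,\mathbf{0}_n)=0$. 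This non-negativity is what pushes every core allocation into $\mathbb{R}^n_{+}$, so that $\text{core}(\nu_{X,f})=\{y\in\mathbb{R}^n_{+}:\ \mathbf{1}_n^Ty=\nu_{X,f}(\mathbf{1}_n),\ a^Ty\ge\nu_{X,f}(a)\ \forall a\in\mathbb{B}^n\}$.

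Next I would prove the polyhedral identity $\Omega_1=\Omega_2$, where
\[
\Omega_1:=\left\{y\in\mathbb{R}^n_{+}:\ a^Ty\ge f(x,a)\ \ \forall a\in\mathbb{B}^n,\ \forall x\in\{x\in X: Ax\le a\}\right\}
\]
is the expansion of the core system $a^Ty\ge\nu_{X,f}(a)$, and
\[
\Omega_2:=\left\{y\in\mathbb{R}^n_{+}:\ y^TA\ge\left(f(\mathbf{e}_1,A\mathbf{e}_1),\dots,f(\mathbf{e}_m,A\mathbf{e}_m)\right)\right\}.
\]
For $\Omega_1\subseteq\Omega_2$, substitute $a=A\mathbf{e}_j$ and $x=\mathbf{e}_j$ --- legitimate because $\mathbf{e}_j\in X$ and $A\mathbf{e}_j\in\mathbb{B}^n\setminus\{\mathbf{0}_n\}$ --- to obtain $(A\mathbf{e}_j)^Ty\ge f(\mathbf{e}_j,A\mathbf{e}_j)$ for every $j\in[m]$. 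For $\Omega_2\subseteq\Omega_1$, for any admissible pair $(a,x)$ run the chain $a^Ty\ge(Ax)^Ty=y^TAx\ge\left(f(\mathbf{e}_1,A\mathbf{e}_1),\dots,f(\mathbf{e}_m,A\mathbf{e}_m)\right)x=F(x,a)\ge f(x,a)$, which successively uses $y\ge0$ with $Ax\le a$, then $x\ge0$ with dual feasibility, then the definition of $F$, and finally $A$-individual subadditivity. The key observation to flag is that $F(x,a)$ does not depend on $a$, which is exactly why the single relaxation $F$ dominates $f(x,a)$ uniformly over all $a\in\mathbb{B}^n$ and all $x$ with $Ax\le a$, reducing this (semi-infinite, when $X$ is continuous) system to the finitely many constraints of $\Omega_2$.

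With $\Omega_1=\Omega_2$ established, the argument closes exactly as in Theorem~\ref{thm:nonlinear-packing-game}: weak duality gives $\nu_{X,f}(\mathbf{1}_n)\le\min_{y\in\Omega_1}\mathbf{1}_n^Ty=\min_{y\in\Omega_2}\mathbf{1}_n^Ty\overset{\text{LP duality}}{=}\nu_{\mathbb{R}^m_{+},F}(\mathbf{1}_n)$, the relaxed LP being feasible ($x=\mathbf{0}_m$) and bounded (no zero column of $A$ confines the feasible $x$ to $[0,1]^m$), so its optimum is attained. Since $\text{core}(\nu_{X,f})$ is precisely the face $\{y\in\Omega_1:\ \mathbf{1}_n^Ty=\nu_{X,f}(\mathbf{1}_n)\}$, it is non-empty if and only if $\min_{y\in\Omega_1}\mathbf{1}_n^Ty=\nu_{X,f}(\mathbf{1}_n)$, i.e.\ if and only if $\nu_{X,f}(\mathbf{1}_n)=\nu_{\mathbb{R}^m_{+},F}(\mathbf{1}_n)$, which is part~(i); and in that case the core coincides with $\argmin\{\mathbf{1}_n^Ty:\ y\in\Omega_2\}$, the optimal set of the dual LP (\ref{eq:NLP-relax-dual-obj}), which is part~(ii). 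For part~(iii), when $X=\mathbb{B}^m$ and the core is non-empty, pick any optimal $x^*$ of $\nu_{\mathbb{B}^m,f}(\mathbf{1}_n)$; then $\nu_{\mathbb{R}^m_{+},F}(\mathbf{1}_n)=\nu_{X,f}(\mathbf{1}_n)=f(x^*,\mathbf{1}_n)\le F(x^*)\le\nu_{\mathbb{R}^m_{+},F}(\mathbf{1}_n)$, where the final inequality holds because $x^*\in\mathbb{B}^m\subseteq\mathbb{R}^m_{+}$ is feasible for the relaxed LP, so $x^*$ is an integral optimizer of that LP --- exactly the implication supplied by Proposition~\ref{prop:relax-exten}(i)(a). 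The step I expect to demand the most care is not any single calculation but the bookkeeping around the $w$-dependence: one must confirm that $A$-individual subadditivity delivers $f(x,a)\le F(x)$ uniformly over every $a\in\mathbb{B}^n$ and $x$ entering the core system, and that monotonicity of $f$ in $w$ is exactly the minimal extra hypothesis making the game non-negative --- without it, $\text{core}(\nu_{X,f})\subseteq\mathbb{R}^n_{+}$, and hence the reduction to $\Omega_1$, can fail.
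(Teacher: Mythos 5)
Your proof is correct and takes essentially the same route as the paper's: the same polyhedral identity $\Omega_1=\Omega_2$ established via the substitutions $a=A\mathbf{e}_j$, $x=\mathbf{e}_j$ and the chain $a^Ty\ge y^TAx\ge F(x,a)\ge f(x,a)$, followed by the same LP-duality argument for (i)--(ii) and the Proposition~\ref{prop:relax-exten}(i)(a) reasoning for (iii). Your additional care in deducing nonnegativity of core allocations from monotonicity of $f$ in $w$ merely makes explicit what the paper handles with a brief parenthetical remark about monotonicity of the game.
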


\subsection{Approximate core}\label{subsec:appro-core}
The full characterization for core can be easily extended to that for approximation core defined as follows. 
\begin{deff}\label{def:appro-core} (\textbf{$\gamma$-approximate core})~\citep[Def. 15.7; Page 389]{nisan2007algorithmic} and \citep[Lemma 2.2; Page 20]{qui2013fractional}. For $\gamma\ge 1$, 
\[
\gamma\text{-core}(\nu)=\{y\in\mathbb{R}^n: \mathbf{1}_n^Ty\le \gamma \nu\left(\mathbf{1}_n\right), a^Ty\ge \nu(a), \forall a\in\mathbb{B}^n\}
\]
\end{deff}

Then we have the following result, whose proof is in Appendix E. 
\begin{thm}\label{thm:appro-core} For the nonlinear game (\ref{game:packing-subadditive}) under assumptions (a-c), we have
\begin{enumerate}[(i)]
\item 
\[
\gamma\text{-core}\left(\nu_{X, f}\right)\ne\emptyset \iff \frac{\nu_{\mathbb{R}^m_{+}, F}\left(\mathbf{1}_n\right)}{\nu_{X, f}\left(\mathbf{1}_n\right)}\le \gamma.
\]
\item Whenever non-empty
\[
\gamma\text{-core}\left(\nu_{X, f}\right)=\argmin\limits_{y\in\mathbb{R}^{n}_{+}}\left\{\mathbf{1}_n^Ty: y^TA\ge (f(\mathbf{e}_1),\dots, f(\mathbf{e}_m))x\right\}
\]
\end{enumerate}
\end{thm}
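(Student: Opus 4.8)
The plan is to obtain Theorem~\ref{thm:appro-core} as a direct corollary of the polyhedral identity $\Omega_1=\Omega_2$ established inside the proof of Theorem~\ref{thm:nonlinear-packing-game}, where $\Omega_1=\{y\in\mathbb{R}^n_+: a^Ty\ge\nu_{X,f}(a),\ \forall a\in\mathbb{B}^n\}$ and $\Omega_2=\{y\in\mathbb{R}^n_+: y^TA\ge(f(\mathbf{e}_1),\dots,f(\mathbf{e}_m))\}$, together with the LP-duality value identity
\[
\min\left\{\mathbf{1}_n^Ty:\ y\in\Omega_2\right\}=\nu_{\mathbb{R}^m_+,F}\!\left(\mathbf{1}_n\right),
\]
which is exactly the chain of equalities displayed in that proof with $w=\mathbf{1}_n$.

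First I would note that in Definition~\ref{def:appro-core} the sign restriction $y\ge 0$ comes for free: choosing $a=\mathbf{e}_i$ gives $y_i\ge\nu_{X,f}(\mathbf{e}_i)\ge 0$, since $\nu_{X,f}$ is non-negative (Section~\ref{sec:game:packing-subadditive}). Hence the stability constraints of the $\gamma$-core describe precisely $\Omega_1$, and using $\Omega_1=\Omega_2$ one rewrites
\[
\gamma\text{-core}\left(\nu_{X,f}\right)=\left\{y\in\mathbb{R}^n_+:\ y^TA\ge(f(\mathbf{e}_1),\dots,f(\mathbf{e}_m)),\ \mathbf{1}_n^Ty\le\gamma\,\nu_{X,f}\!\left(\mathbf{1}_n\right)\right\},
\]
i.e.\ a budgeted version of the dual feasible region of (\ref{eq:NLP-relax-dual}). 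This already yields the explicit description in part (ii) (and, for $\gamma=1$, it collapses to the $\argmin$ set of Theorem~\ref{thm:nonlinear-packing-game}(ii)), provided the set is non-empty.

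For part (i), I would use that $\Omega_2$ is a non-empty closed polyhedron on which the linear functional $y\mapsto\mathbf{1}_n^Ty$ attains its minimum, and that this minimum equals $\nu_{\mathbb{R}^m_+,F}(\mathbf{1}_n)$ by LP duality. Therefore the budgeted intersection above is non-empty if and only if $\nu_{\mathbb{R}^m_+,F}(\mathbf{1}_n)\le\gamma\,\nu_{X,f}(\mathbf{1}_n)$, which, when $\nu_{X,f}(\mathbf{1}_n)>0$, rearranges to $\nu_{\mathbb{R}^m_+,F}(\mathbf{1}_n)/\nu_{X,f}(\mathbf{1}_n)\le\gamma$; combined with the previous paragraph this proves both (i) and (ii).

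Since the genuinely hard step — reducing the semi-infinite core constraint system to the finitely many inequalities of $\Omega_2$ — was already carried out for Theorem~\ref{thm:nonlinear-packing-game}, this proof is essentially bookkeeping. The only points needing care, and the closest thing to an obstacle, are (a) verifying that the implicit $y\ge 0$ in the $\gamma$-core makes its stability region equal to (not merely contained in) $\Omega_1$, and (b) the degenerate boundary case $\nu_{X,f}(\mathbf{1}_n)=0$: then either $\nu_{\mathbb{R}^m_+,F}(\mathbf{1}_n)=0$ as well and the characterization holds with the ratio read as $1$, or $\nu_{\mathbb{R}^m_+,F}(\mathbf{1}_n)>0$, in which case no $y\in\Omega_2$ satisfies $\mathbf{1}_n^Ty\le 0$ and the $\gamma$-core is empty for every finite $\gamma$, consistent with reading the ratio as $+\infty$.
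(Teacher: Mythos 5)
For part (i) your argument is essentially the paper's own proof: its Appendix F likewise recalls from the proof of Theorem~\ref{thm:nonlinear-packing-game} that $\min\{\mathbf{1}_n^Ty: y\in\Omega_1\}=\min\{\mathbf{1}_n^Ty: y\in\Omega_2\}=\nu_{\mathbb{R}^m_{+},F}\left(\mathbf{1}_n\right)$ and then compares this attained minimum with the budget $\gamma\,\nu_{X,f}\left(\mathbf{1}_n\right)$, using a dual optimizer $y^*$ for sufficiency and an arbitrary $\gamma$-core member for necessity. Your observations that $y\ge 0$ is forced by choosing $a=\mathbf{e}_i$ and your handling of the degenerate case $\nu_{X,f}\left(\mathbf{1}_n\right)=0$ are harmless refinements that the paper passes over silently.

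Part (ii) is where there is a genuine gap. Your budgeted set $\left\{y\in\mathbb{R}^n_{+}: y^TA\ge(f(\mathbf{e}_1),\dots,f(\mathbf{e}_m)),\ \mathbf{1}_n^Ty\le\gamma\,\nu_{X,f}\left(\mathbf{1}_n\right)\right\}$ is a correct description of the $\gamma$-core, but it does not ``yield the explicit description in part (ii)'': membership in $\argmin\{\mathbf{1}_n^Ty: y\in\Omega_2\}$ forces $\mathbf{1}_n^Ty=\nu_{\mathbb{R}^m_{+},F}\left(\mathbf{1}_n\right)$, whereas your set contains every dual-feasible $y$ with cost up to the budget. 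Whenever $\gamma\,\nu_{X,f}\left(\mathbf{1}_n\right)>\nu_{\mathbb{R}^m_{+},F}\left(\mathbf{1}_n\right)>0$ (take any instance with non-empty core and $\gamma>1$, then increase one coordinate of an optimal dual solution by a small $\epsilon$), the $\gamma$-core strictly contains the $\argmin$ set, so only the inclusion $\argmin\subseteq\gamma\text{-core}$ survives in general, with equality exactly at the threshold $\gamma\,\nu_{X,f}\left(\mathbf{1}_n\right)=\nu_{\mathbb{R}^m_{+},F}\left(\mathbf{1}_n\right)$. In fairness, the paper's Appendix F proves only (i) and never establishes (ii); its argument, like yours, delivers only the inclusion via $y^*$. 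So your derivation matches what can actually be proven, but you should state that (ii), read as an equality, fails in general rather than assert that your budgeted description establishes it.
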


\section{Applications}\label{sec:apps}

We demonstrate the utilities of Theorems~\ref{thm:nonlinear-packing-game}-\ref{thm:nonlinear-partition-game} by applying them to various games in this section. We focus on quadratic and linear fractional objective functions with or without constraints which are important classes of nonlinear functions and already have many applications.

\subsection{Combinatorial quadratic games}
We consider combinatorial quadratic games in this section. 
\begin{eqnarray}
\label{game:packing-subadditive-quadratic}\mathbb{B}^n\ni w\mapsto \nu_{X, f}(w):=\max\limits_{x\in \mathbb{B}^m} \left[b^Tx+x^TQx: Ax\le w\right]\in  \mathbb{R},
\end{eqnarray}
under the assumptions (a-c), where $b\in \mathbb{R}^m, Q\in \mathbb{R}^{m\times m}$ with $Q^T=Q$, and $X=\mathbb{B}^m$. From Proposition~\ref{prop:id-sub}(v), the individual subadditiveness of the quadratic objective function $f(x)=b^Tx+x^TQx$ required in assumption (c) is equivalent to submodularity of $f$, which in turn is equivalent to non-positive off-diagonal entries in $Q$ for quadratic function.

The basis-linear relaxation of $f$ is $F(x)=b^Tx+\sum_{i\in [m]}q_{ii}x_i=(b+q)^Tx$, where $q^T=(q_{11},\dots, q_{nn})$. Moreover, we have
\begin{eqnarray*}
X=\mathbb{B}^m\implies \overline{X}=\overline{\mathbb{B}}^n&=&\left\{x\in \mathbb{B}^m: f(x)=F(x)\right\}=\left\{x\in\mathbb{B}^m: \sum_{i\ne j\in [m]}q_{ij}x_ix_j=0\right\}\\
&=&\left\{x\in \mathbb{B}^m: q_{ij}x_ix_j=0, \forall i\ne j\in [m]\right\}\\
&=&\left\{x\in \mathbb{B}^m: q_{ij}=0, \forall i\ne j\in \text{supp}(x)\right\},
\end{eqnarray*}
where $\text{supp}(x):=\left\{i\in [m]: x_i=1\right\}$. So the relaxed anchor and lower games are as follows: 
\begin{eqnarray*}
\mathbb{B}^n\ni w\mapsto \nu_{\mathbb{R}^m_{+},F}(w)&:=&\max\limits_{x\in \mathbb{R}^m_{+}} \left[(b+q)^Tx: Ax\le w\right]\\
\mathbb{B}^n\ni w\mapsto \nu_{\overline{\mathbb{B}}^n, F}(w)&:=&\max\limits_{x\in \overline{\mathbb{B}}^n} \left[(b+q)^Tx: Ax\le w\right]\\
&=&\max\limits_{x\in \mathbb{B}^m} \left[(b+q)^Tx: Ax\le w, q_{ij}x_ix_j=0, \forall i\ne j\in [m]\right]
\end{eqnarray*}

Theorem \ref{thm:core_nonempty} implies that the core of the game $\nu_{\mathbb{B}^m,f}$ is non-empty if and only if $\nu_{\overline{\mathbb{B}}^n, F}\left(\mathbf{1}_n\right)=\nu_{\mathbb{R}^m_{+},F}\left(\mathbf{1}_n\right)$. 

This characterization becomes more explicit when we consider special constraint matrix $A$, as will be done below. In particular we consider two classes of applications. The first is when $A$ is the identity matrix $I_{n}$, which models games like the portfolio game and the maximum cut game. The second is when $A$ is the incidence matrix of a graph, which models games like the matching game.

\subsubsection{Portfolio games and beyond}\label{sec:unconstrained-qp}
We assume that $A=I_{n}$ throughout this section, and hence $m=n$. 
Then 
\begin{eqnarray*}
\nu_{\mathbb{R}^n_{+},F}\left(\mathbf{1}_n\right)&=&\max\limits_{x\in \mathbb{R}^n_{+}} \left[(b+q)^Tx: x\le \mathbf{1}_n\right]=\mathbf{1}_n^T(b+q)^{+}\\
\nu_{\overline{\mathbb{B}}^n, F}\left(\mathbf{1}_n\right)&=&\max\limits_{x\in \mathbb{B}^n} \left[(b+q)^Tx: q_{ij}x_ix_j=0, \forall i\ne j\in [n]\right],
\end{eqnarray*}
We have the following full characterization for core non-emptiness. 
\begin{prop}\label{prop:quad-game} The combinatorial quadratic game (\ref{game:packing-subadditive-quadratic}) when $A=I_n$ has a non-empty core if and only if for all distinct pair 
$\forall i, j\in \left\{k\in [n]: (b+q)^T\mathbf{e}_k>0\right\}$, we have $q_{ij}=0$. Moreover, whenever nonempty, the core has a unique member $y=(b+q)^{+}$.
\end{prop}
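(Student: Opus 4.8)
The plan is to apply Theorem~\ref{thm:core_nonempty} directly to the combinatorial quadratic game with $A = I_n$, using the explicit formulas already computed for $\nu_{\mathbb{R}^n_{+},F}(\mathbf{1}_n)$ and $\nu_{\overline{\mathbb{B}}^n, F}(\mathbf{1}_n)$. By Theorem~\ref{thm:core_nonempty}, the core is non-empty if and only if $\nu_{\overline{\mathbb{B}}^n, F}(\mathbf{1}_n) = \nu_{\mathbb{R}^n_{+},F}(\mathbf{1}_n) = \mathbf{1}_n^T(b+q)^{+} = \sum_{i \in P}(b+q)_i$, where I write $P := \{k \in [n]: (b+q)_k > 0\}$ for the ``profitable'' coordinates. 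So the whole proof reduces to characterizing when the constrained maximum $\nu_{\overline{\mathbb{B}}^n, F}(\mathbf{1}_n) = \max_{x \in \mathbb{B}^n}[(b+q)^Tx : q_{ij}x_ix_j = 0,\ \forall i \ne j]$ equals $\sum_{i \in P}(b+q)_i$.

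For the forward-easy direction: if $q_{ij} = 0$ for all distinct $i,j \in P$, then the indicator vector $x = \mathbf{1}_P$ of $P$ is feasible for the constrained problem (all the cross-terms $q_{ij}x_ix_j$ with $x_i = x_j = 1$ vanish since then $i,j \in P$), and it attains the value $\sum_{i \in P}(b+q)_i$, which is the unconstrained (relaxed) optimum; since the constrained optimum can never exceed the relaxed one, equality holds, so the core is non-empty. Conversely, suppose there exist distinct $i,j \in P$ with $q_{ij} \ne 0$. Then any feasible $x$ for the constrained problem cannot have $x_i = x_j = 1$ simultaneously, so $\text{supp}(x) \cap \{i,j\}$ misses at least one of $i,j$; since dropping a profitable coordinate strictly decreases the objective (each $(b+q)_k > 0$ for $k \in P$) and adding non-profitable coordinates cannot help, the best feasible $x$ has value strictly less than $\sum_{i\in P}(b+q)_i$. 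Hence $\nu_{\overline{\mathbb{B}}^n, F}(\mathbf{1}_n) < \nu_{\mathbb{R}^n_{+},F}(\mathbf{1}_n)$ and the core is empty. For the uniqueness of the core member, I would invoke Theorem~\ref{thm:nonlinear-packing-game}(ii): the core equals $\argmin_{y \ge 0}[\mathbf{1}_n^Ty : y^T I_n \ge b+q] = \argmin_{y \ge 0}[\mathbf{1}_n^Ty : y \ge b+q]$, and the unique minimizer of $\sum_i y_i$ subject to $y_i \ge \max\{0, (b+q)_i\}$ is obviously $y = (b+q)^{+}$.

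The only mildly delicate point — and the one I would state carefully — is the ``strict decrease'' argument in the converse direction: I must argue that over all feasible $x$ (not just those supported inside $P$), the optimum is captured by taking $x = \mathbf{1}_{P'}$ for subsets $P' \subseteq P$ that are ``$q$-independent'' (no nonzero off-diagonal entry between two elements of $P'$), and that whenever some pair in $P$ is $q$-coupled, every such $P'$ is a proper subset of $P$, losing at least one strictly positive term. This is essentially a short observation: an optimal feasible $x$ never includes a coordinate $k \notin P$ (it would only add a non-positive term without relaxing any constraint a helpful way), so $\text{supp}(x) \subseteq P$; and then the value is $\sum_{k \in \text{supp}(x)}(b+q)_k$, maximized by taking $\text{supp}(x)$ as large as the constraints allow. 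I do not anticipate a genuine obstacle here — the result is essentially a direct corollary of Theorem~\ref{thm:core_nonempty} once the two special-case optimal values are plugged in — but the converse's case analysis on supports is where a reader would want the argument spelled out.
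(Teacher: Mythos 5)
Your proposal is correct and follows essentially the same route as the paper: both apply the lower-game characterization (Theorem~\ref{thm:core_nonempty}) with the explicit values $\nu_{\mathbb{R}^n_{+},F}(\mathbf{1}_n)=\mathbf{1}_n^T(b+q)^{+}$ and $\nu_{\overline{\mathbb{B}}^n,F}(\mathbf{1}_n)$, argue that a $q$-coupled pair of profitable coordinates forces the loss of a strictly positive term (the paper phrases this as a one-line contradiction, you via the support analysis, which is the same reasoning spelled out), and obtain the unique core member $y=(b+q)^{+}$ as the unique optimal solution of the dual LP $\min_{y\ge 0}[\mathbf{1}_n^Ty: y\ge b+q]$. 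No gaps; your write-up of the converse is if anything more explicit than the paper's.
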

\begin{proof} Suffice being evident, for necessity, assume that the core is non-empty, namely $\nu_{\overline{\mathbb{B}}^n, F}\left(\mathbf{1}_n\right)=\mathbf{1}_n^T(b+q)^{+}$. Suppose on the contrary, there exists a distinct pair $\forall i, j\in \left\{k\in [n]: (b+q)^T\mathbf{e}_k>0\right\}$ such that we have $q_{ij}<0$. Then either $x_i=0$ or $x_j=0$, implying either $(b+q)^T\mathbf{e}_i$ or $(b+q)^T\mathbf{e}_j$ will be missing from $\mathbf{1}_n^T(b+q)^{+}$, a contradiction.

The dual program is as follows:
\begin{eqnarray*}
\nu_{\mathbb{R}^n_{+},F}\left(\mathbf{1}_n\right)&=&\max\limits_{x\in \mathbb{R}^n_{+}} \left[(b+q)^Tx: x\le \mathbf{1}_n\right]=\min\limits_{y\in \mathbb{R}^n_{+}} \left[\mathbf{1}_n^Ty: y\ge b+q\right]=\mathbf{1}_n^T(b+q)^{+}
\end{eqnarray*}
Then $y=(b+q)^{+}$ is a dual optimal solution, which is the unique member of the core, whenever nonempty. 

\end{proof}

The last result implies that testing core non-emptiness for this game can be done in polynomial time. We can also offer an equivalent statement in graph terms. Construct a vertex-weighted and edge-weighted graph $(V=[n], E=[n]\times [n])$ where each vertex $i\in [n]$ has a weight weight $(b+q)^T\mathbf{e}_i$, and each edge $(i,j)\in E$ has a weight $q_{ij}$. Then the core of the game $\nu$ is non-empty if and only if an edge with a negative weight exists if and only if at least one of its adjacent vertices has a non-positive weight.

As practical applications of the above combinatorial quadratic model, we present two concrete examples.  
\begin{enumerate}[(i)]
\item Discrete portfolio game arising from discrete portfolio problem~\citep{olszewski2014selecting} (a.k.a., project or resource allocation problem~\citep{IbarakiKatoh1988}): 
\[
\mathbb{B}^n\ni w\mapsto \nu(w):=\max_{x\in \mathbb{B}^n}\left[f(x):=\mu^Tx-\frac{\gamma}{2}x^T\Sigma x: x\le w\right]\in\mathbb{R},
\]
where $\mu\in\mathbb{R}^n, \Sigma=(\sigma_{ij})_{i,j\in [n]}\in\mathbb{R}^{n\times n}$ are the mean return vector and covariance matrix of $n$ assets, and $\gamma>0$ is a risk-averse factor. For each asset $i\in [n]$ as a player, $x_i$ is a binary decision variable indicating whether asset $i$ is selected or not. For positively correlated assets or projects, we have $\Sigma\ge 0$, implying submodular (individually subadditive) objective function $f$. 

Let $q:=(\sigma_{11},\dots, \sigma_{nn})^T$ be the variance vector of the $n$ assets. Then Proposition~\ref{prop:quad-game} implies that $\nu$ has a non-empty core if and only if for all distinct pair
$\forall i, j\in \left\{k\in [n]: \left(\mu-\frac{\gamma}{2}q\right)^T\mathbf{e}_k>0\right\}$, we have $\sigma_{ij}=0$. Economically, this means that core allocation is possible only when any two assets with individual positive risk-adjusted returns must be the uncorrelated. Moreover, whenever nonempty, the core has a unique member $y=\left(\mu-\frac{\gamma}{2}q\right)^{+}$.

\item Maximum cut game arising from the maximum cut problem~\citep{goemans1995improved}: 
\begin{eqnarray*}
\nu(w)&:=&\max\limits_{x\in\mathbb{B}^n} \left[\frac{1}{2}\sum_{i<j}w_{ij}\left(1-(2x_i-1)(2x_j-1) \right): x\le w\right]\\
&=&\max\limits_{x\in\mathbb{B}^n} \left[\sum_{i<j}w_{ij}\left(x_i+x_j-2x_ix_j\right): x\le w\right]\\
&=&\max\limits_{x\in\mathbb{B}^n} \left[(W\mathbf{1})^Tx-x^TWx: x\le w\right],
\end{eqnarray*}
where the non-negative edge weight function $w: E\mapsto \mathbb{R}_{+}$ is defined over an undirected graph $(V=[n],E=[n]\times [n])$, and $W=(w_{ij})_{n\times n}\in\mathbb{R}^{n\times n}_{+}$ such that $W=W^T$ and $w_{ii}=0, i\in[n]$. For vertex $i\in [n]$ as a player, $x_i$ is a binary decision variable indicating whether asset $i$ is on one side of the selected cut or not. 

Let $q:=\mathbf{0}_n$. Then Proposition~\ref{prop:quad-game} implies that $\nu$ has a non-empty core if and only if for all distinct pair $\forall i, j\in \left\{k\in [n]: (W\mathbf{1}_n)^T\mathbf{e}_k>0\right\}$, we have $w_{ij}=0$, which implies $W=0$ because $W\ge 0$. Hence $\nu$ always has an empty core unless $G$ is an empty graph (i.e., edgeless), in which case, the game is trivially $\nu(w)=0$ and hence a unique trivial core member $y=(W\mathbf{1}_n)^{+}=0$.

Since the core is always empty, we consider approximate core via Theorem~\ref{thm:appro-core}.
\begin{fact} The smallest $\gamma$ such that the $\gamma$-core is non-empty for the maximum cut game is $4$.
\end{fact}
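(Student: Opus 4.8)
The plan is to reduce the Fact to a worst-case computation of the ratio $\nu_{\mathbb{R}^n_{+},F}\left(\mathbf{1}_n\right)/\nu_{\mathbb{B}^n,f}\left(\mathbf{1}_n\right)$. By Theorem~\ref{thm:appro-core}(i), for a fixed instance the $\gamma$-core is non-empty exactly when $\nu_{\mathbb{R}^n_{+},F}\left(\mathbf{1}_n\right)/\nu_{\mathbb{B}^n,f}\left(\mathbf{1}_n\right)\le\gamma$; hence the least $\gamma$ that works simultaneously for every maximum cut game equals the supremum of this ratio over all non-negatively weighted graphs $(G,w)$, and the Fact asserts that this supremum is $4$ (so the admissible set of $\gamma$ is $[4,\infty)$, with minimum $4$).

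First I would make both quantities explicit. Here $b=W\mathbf{1}_n$ and $Q=-W$ has zero diagonal (since $w_{ii}=0$), so $q=\mathbf{0}_n$ and $F(x)=(W\mathbf{1}_n)^Tx$; because $W\mathbf{1}_n\ge\mathbf{0}_n$ the anchor LP is maximized at $x=\mathbf{1}_n$, giving $\nu_{\mathbb{R}^n_{+},F}\left(\mathbf{1}_n\right)=\mathbf{1}_n^TW\mathbf{1}_n=2\sum_{i<j}w_{ij}$, i.e.\ twice the total edge weight $w(E)$. For the original game, the identity $(W\mathbf{1}_n)^Tx-x^TWx=\sum_{i<j}w_{ij}(x_i+x_j-2x_ix_j)$ shows the objective evaluates to the weight of the cut determined by $S=\{i:x_i=1\}$, so $\nu_{\mathbb{B}^n,f}\left(\mathbf{1}_n\right)=\operatorname{mc}(G,w)$, the maximum-weight cut. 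Thus the ratio in question is $2\,w(E)/\operatorname{mc}(G,w)$. (The edgeless graph yields the trivial game $\nu\equiv0$, whose $\gamma$-core equals $\{\mathbf{0}_n\}$ for every $\gamma\ge1$, so it is vacuous and set aside.)

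Next I would establish the universal upper bound by the classical randomized argument: a uniformly random bipartition cuts each edge with probability $\tfrac12$, so $\operatorname{mc}(G,w)\ge\tfrac12 w(E)$, whence $2\,w(E)/\operatorname{mc}(G,w)\le4$; by Theorem~\ref{thm:appro-core}(i) the $4$-core is non-empty for every instance. For tightness I would use the complete graphs $K_n$ with unit weights, where $w(E)=\binom n2$ and $\operatorname{mc}(K_n)=\lfloor n/2\rfloor\lceil n/2\rceil$, so the ratio equals $n(n-1)/(\lfloor n/2\rfloor\lceil n/2\rceil)$, which increases to $4$ as $n\to\infty$. Hence for any $\gamma<4$ some $K_n$ has ratio exceeding $\gamma$ and thus an empty $\gamma$-core. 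Combining the two steps identifies $[4,\infty)$ as exactly the set of $\gamma$ for which the $\gamma$-core is non-empty for every maximum cut game, which is the Fact.

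I expect the main difficulty to be conceptual bookkeeping rather than any single hard estimate: one must be careful that $4$ is attained only in the limit (the ratio is strictly below $4$ on every fixed instance, e.g.\ by the Edwards bound $\operatorname{mc}(G,w)\ge w(E)/2+\Omega(\sqrt{|E|})$ in the unit-weight case), so the conclusion has to be phrased as ``$4$ is the least uniform $\gamma$'' rather than ``some instance forces $\gamma=4$''. The only genuinely routine pieces are the evaluation of $\operatorname{mc}(K_n)$ and the monotone convergence of $n(n-1)/(\lfloor n/2\rfloor\lceil n/2\rceil)$ to $4$, both standard.
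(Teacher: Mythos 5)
Your proposal is correct and follows essentially the same route as the paper: the upper bound comes from the random-partition estimate $\operatorname{mc}(G,w)\ge \tfrac14\,\mathbf{1}_n^TW\mathbf{1}_n$ combined with Theorem~\ref{thm:appro-core}, and tightness comes from complete graphs with uniform weights, where the ratio tends to $4$. Your added remark that $4$ is only attained in the limit, so the Fact must be read as the least $\gamma$ working uniformly over all instances, is a fair clarification of what the paper's limiting argument implicitly asserts.
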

\begin{proof}
\[
\frac{\nu_{\mathbb{R}^n_{+}, F}\left(\mathbf{1}_n\right)}{\nu_{\mathbb{B}^n, f}\left(\mathbf{1}_n\right)}=\frac{\mathbf{1}_n^TW\mathbf{1}_n}{\nu_{\mathbb{B}^n, f}\left(\mathbf{1}_n\right)}\le\frac{\mathbf{1}_n^TW\mathbf{1}_n}{\frac{1}{4} \mathbf{1}_n^TW\mathbf{1}_n}=4,
\]
where the inequality follows because the random partition argument implies that
\begin{eqnarray*}
\nu_{\mathbb{B}^n, f}\left(\mathbf{1}_n\right)&=&\max\limits_{x\in\mathbb{B}^n} \left[(W\mathbf{1})^Tx-x^TWx \right]\ge \frac{1}{4} \mathbf{1}_n^TW\mathbf{1}_n.
\end{eqnarray*}
Therefore, Theorem~\ref{thm:appro-core} implies that 4-core is always non-empty with the unique $4$-core member $y=W\mathbf{1}\in 4-\text{core}(\nu_{\mathbb{B}^n, f})$. 

This is the smallest $\gamma$ such that $\gamma$-core exists. Consider the complete graph $K_n$ with even $n$ and uniform edge weights. Then $\mathbf{1}_n^TW\mathbf{1}_n=n(n-1)$, while the maximum cut capacity $\frac{n^2}{4}$ achieved by partitioning the graph equally. So $\lim\limits_{n\to\infty} \frac{\frac{n^2}{4}}{n(n-1)}=\frac{1}{4}$. 
\end{proof}

\end{enumerate}

\subsubsection{Quadratic matching game}

We consider a more complicated $A$ than the identity matrix in this section. Given a (undirected) graph $G=(V=[n], E=[m])$, let $A \in \mathbb{B}^{n\times m}$ be the incidence matrix of $G$ such that $A_{ij}=1$ if vertex $i\in V$ is incident with edge $j\in E$; and 0 otherwise. The characteristic function of the quadratic matching game with vertices as players is defined as follows:
\begin{eqnarray*}
\mathbb{B}^n\ni w\mapsto \nu_{\mathbb{B}^m,f}(w):=\max_{x \in \mathbb{B}^m} \left[f(x):=b^T x+x^TQx: Ax\le w\right],
\end{eqnarray*}
where the binary decision variable $x_i$ indicates whether edge $i\in [m]$ is included in the matching; $b\in\mathbb{R}^m$; and $Q\in\mathbb{R}^{m\times m}$ with $Q^T=Q, q_{ii}=0, i\in [m], q_{ij}\le 0, i, j\in [m]$, and hence the objective function is individually subadditive (or equivalently submodular).

From earlier discussion, the core for this quadratic matching game is non-empty if and only if
\begin{eqnarray*}
\max_{x\in\mathbb{B}^m} \left[b^Tx: Ax\le \mathbf{1_n}, q_{ij}x_ix_j=0, \forall i\ne j\in [m]\right]=\max_{x\in\mathbb{R}^m_{+}} \left[b^Tx: Ax\le \mathbf{1_n}\right]
\end{eqnarray*}

While finding a core member is polynomially solvable whenever non-empty, however, we now show that checking core non-emptiness for this quadratic matching game is NP-complete even for bipartite graph, in contrast to the counter-part linear game where testing core non-emptiness for bipartite graph is polynomially solvable~\citep[Corollary 3]{deng1999algorithmic}.

We define conflict graph $G^{\prime}=(E,E')$ where $E'=\{(i,j): q_{ij}=0,~i,j\in E\}$ be the edge pair without conflicts. Let $V_1,\ldots,V_k$ be the vertices of maximal cliques of $G^{\prime}$. If $x$ is a solution of $\nu_{\overline{\mathbb{B}^m},F}\left(\mathbf{1}_n\right)$, then $x$ only use edges corresponding to some clique $V_i$. Thus, we have the following Proposition.

\begin{prop}~\label{prop:q-matching-core}
The individually subadditive quadratic matching game $\nu$ has a non-empty core if and only if
$$
\nu_{\mathbb{R}^m_{+},F}\left(\mathbf{1}_n\right)=\max\{v_1,\ldots,v_k\},
$$
where $v_i$ is the weight of maximum matching of $G[V_i]$ with respect to weight $b$.
\end{prop}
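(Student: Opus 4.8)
The plan is to specialize Theorem~\ref{thm:core_nonempty} to the quadratic matching game, reducing the computation of the lower-game value $\nu_{\overline{\mathbb{B}^m},F}(\mathbf{1}_n)$ to a maximum over the maximal cliques of the conflict graph $G'$. The starting point is the characterization already derived above: the core is non-empty if and only if $\nu_{\overline{\mathbb{B}^m},F}(\mathbf{1}_n)=\nu_{\mathbb{R}^m_+,F}(\mathbf{1}_n)$, where the lower game is
\[
\nu_{\overline{\mathbb{B}^m},F}(\mathbf{1}_n)=\max_{x\in\mathbb{B}^m}\left[b^Tx: Ax\le\mathbf{1}_n,\ q_{ij}x_ix_j=0,\ \forall i\ne j\in[m]\right].
\]
Here I have used that, since $q_{ii}=0$ for all $i$, the basis-linear relaxation is $F(x)=b^Tx$, so the objective of the lower game over $\overline{\mathbb{B}^m}$ is simply $b^Tx$. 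Thus the only thing to prove is the identity
\[
\nu_{\overline{\mathbb{B}^m},F}(\mathbf{1}_n)=\max\{v_1,\dots,v_k\}.
\]

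The key combinatorial observation is the translation between the extra constraints and cliques of $G'$. First I would note that for a binary $x\in\mathbb{B}^m$, the condition $q_{ij}x_ix_j=0$ for all $i\ne j$ is equivalent to saying that $\mathrm{supp}(x)$ contains no pair $\{i,j\}$ with $q_{ij}<0$; equivalently (recalling $q_{ij}\le 0$ always), every pair of distinct edges in $\mathrm{supp}(x)$ is a conflict-free pair, i.e.\ $\mathrm{supp}(x)$ induces a clique in $G'$. Hence $\mathrm{supp}(x)$ is contained in some maximal clique $V_i$. Conversely, any clique is contained in a maximal one. Combined with the remaining constraint $Ax\le\mathbf{1}_n$ — which just says the selected edges form a matching of $G$ — the feasible $x$ for the lower game are exactly the matchings of $G$ whose edge set lies inside a single maximal clique $V_i$ of $G'$; that is, matchings of the subgraph $G[V_i]$ (interpreting $V_i\subseteq E$ as an edge subset, so $G[V_i]$ is the subgraph of $G$ with edge set $V_i$). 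Maximizing $b^Tx$ over such $x$ gives $\max_i v_i$, where $v_i$ is the maximum-weight matching value of $G[V_i]$ under weights $b$. This yields the displayed identity, and plugging it into Theorem~\ref{thm:core_nonempty} gives the Proposition.

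I would fill in two small points carefully. One is the equivalence ``$q_{ij}x_ix_j=0\ \forall i\ne j \iff \mathrm{supp}(x)$ is a clique of $G'$'': this needs the sign convention $q_{ij}\le 0$ so that $q_{ij}x_ix_j=0$ is the same as $q_{ij}=0$ or $x_ix_j=0$, together with the definition $E'=\{(i,j):q_{ij}=0\}$. The other is that restricting the maximization of the matching to \emph{maximal} cliques loses nothing, since any feasible support sits inside a maximal clique and enlarging the ambient clique only enlarges the feasible matching set (hence never decreases $v_i$); this is a routine monotonicity remark. I do not expect a serious obstacle here — the statement is essentially a bookkeeping specialization of Theorem~\ref{thm:core_nonempty} — but the one place demanding attention is making the identification of the lower game's feasible set with ``matchings inside a single maximal clique'' precise, in particular ensuring that we may assume $x$'s support lies in one clique rather than being a union of overlapping cliques (which it need not be, since the constraint $q_{ij}x_ix_j=0$ is imposed for \emph{all} pairs in the support simultaneously, forcing the support itself to be a clique).
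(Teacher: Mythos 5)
Your proposal is correct and follows essentially the same route as the paper: the paper's own justification is exactly the observation that any feasible $x$ for the lower game $\nu_{\overline{\mathbb{B}^m},F}\left(\mathbf{1}_n\right)$ has support forming a clique of the conflict graph $G'$, hence lies in some maximal clique $V_i$, so that the lower-game value is $\max_i v_i$, and Theorem~\ref{thm:core_nonempty} then gives the stated criterion. One trivial remark: the sign convention $q_{ij}\le 0$ is what lets one pass from $\sum_{i\ne j}q_{ij}x_ix_j=0$ to $q_{ij}x_ix_j=0$ for every pair (all terms have the same sign); the further equivalence "$q_{ij}x_ix_j=0$ iff $q_{ij}=0$ or $x_ix_j=0$" needs no sign assumption, so your attribution of where the convention is used is slightly misplaced but harmless.
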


This proposition above suggests an algorithm to check the non-emptiness of the core when the maximal cliques of $G^{\prime}$ are known. Unfortunately, it is well-known that the problem of finding the maximum clique of a graph is NP-hard. We next show that deciding whether the core of the quadratic matching game is non-empty is NP-hard.
Our proof is based on the complexity of the assignment problem with conflicts~\cite{darmann2011paths}.
Conflicts is defined as a series of pair of edges.
For each conflicting pair, at most one edge can occur in a matching.
We show a reduction from the $(3,B2)$-SAT problem, i.e., a 3-SAT problem where each clause contains exactly three literals, and each variable appears exactly twice as a positive literal and exactly twice as a negative literal.
It is known to be NP-complete.
Let $I$ be an arbitrary instance of $(3,B2)$-SAT with $k$ clauses $C_j$ and $n$ variables $x_i$.
We define a bipartite graph $G_I$ in the following way:

\begin{figure}[H]
    \centering
    \includegraphics[width=10cm]{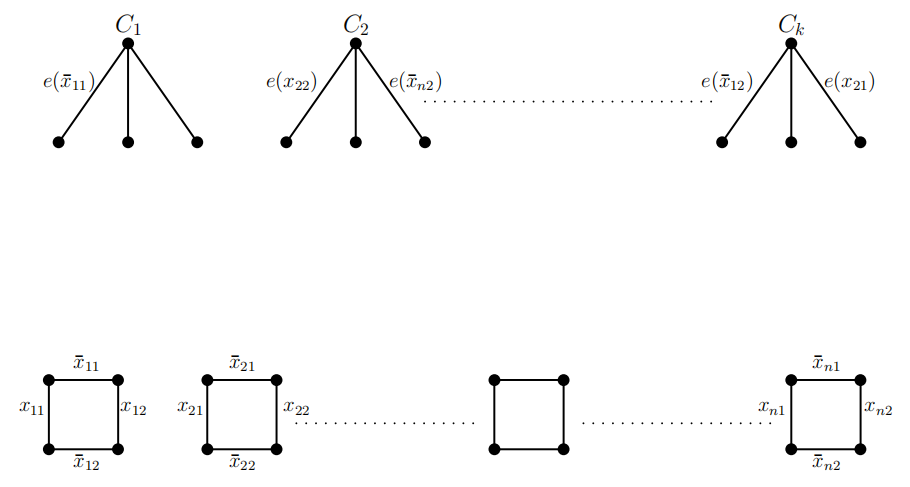}
    \caption{Graph $G$.}
    \label{fig:NP-hard-graph}
\end{figure}

For each variable $x_i$ we introduce a cycle of length four consisting of edges $x_{i1}, x_{i2}, \bar{x}_{i1}$ and $\bar{x}_{i2}$ such that $x_{i1}$ and $x_{i2}$ are not adjacent.
These cycles are isolated components of $G_I$.
Moreover, we introduce for each clause $C_j$ of $I$ an isolated claw rooted at a vertex $C_j$ with the following three edges: If the literal $x_i$ occurs in clause $C_j$ we denote one edge incident to vertex $C_j$ by $e(x_{i1})$ or by $e(x_{i2})$ if the name $e(x_{i1})$ was already used.
If the negated literal $\bar{x}_i$ occurs in clause $C_j$ we denote one edge incident to vertex $C_j$ by $e(\bar{x}_{i1})$ or by $e(\bar{x}_{i2})$ if the name $e(\bar{x}_{i1})$ was already used.

Next, we define the set of conflicts $\mathcal{C}_I$ as the union of $(x_{i1},e(\bar{x}_{i1}))$, $(x_{i2},e(\bar{x}_{i2}))$, $(\bar{x}_{i1},e(x_{i1}))$, $(\bar{x}_{i2},e(x_{i2}))$ for $i=1,2,\ldots,n$.

\begin{lem}~\cite{darmann2011paths}\label{lem:NP-hard}
Given an instance $I$ of $(3,B2)$-SAT, $I$ is TRUE if and only if the size maximum matching of $G_I$ with conflicts $\mathcal{C}_I$ is $2n+k$.
\end{lem}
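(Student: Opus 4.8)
The plan is to prove the equivalence by a truth-table/matching correspondence, exploiting two structural facts about $G_I$: (1) $G_I$ is the vertex-disjoint union of $n$ four-cycles (one per variable) and $k$ claws $K_{1,3}$ (one per clause), so all interaction between ``variable gadgets'' and ``clause gadgets'' happens through $\mathcal{C}_I$, not through shared vertices; and (2) every pair in $\mathcal{C}_I$ couples a cycle edge with a claw edge, and for each claw edge $e(x_{i1})$ (resp.\ $e(\bar x_{i1})$, $e(x_{i2})$, $e(\bar x_{i2})$) there is exactly one such conflict, namely with the ``opposite'' cycle edge $\bar x_{i1}$ (resp.\ $x_{i1}$, $\bar x_{i2}$, $x_{i2}$). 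First I would record the trivial upper bound: any conflict-free matching uses at most $2$ edges in each four-cycle and at most $1$ in each claw, so its size is at most $2n+k$; hence the maximum conflict-free matching equals $2n+k$ iff some conflict-free matching attains $2n+k$, and attaining it forces the matching to be \emph{tight} — exactly one of the two perfect matchings $\{x_{i1},x_{i2}\}$ or $\{\bar x_{i1},\bar x_{i2}\}$ on every cycle, and exactly one edge on every claw.

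For the direction ``$I$ TRUE $\Rightarrow$ a conflict-free matching of size $2n+k$ exists'': given a satisfying assignment $\tau$, for each variable $x_i$ I would put $\{x_{i1},x_{i2}\}$ into the matching when $\tau(x_i)$ is TRUE and $\{\bar x_{i1},\bar x_{i2}\}$ when $\tau(x_i)$ is FALSE. This contributes $2n$ edges and is conflict-free because $\mathcal{C}_I$ contains no conflict between two cycle edges. Then for each clause $C_j$ I would pick a literal satisfied by $\tau$ and add the corresponding claw edge at $C_j$. If the literal is positive, say $x_i$, the added edge is $e(x_{i1})$ or $e(x_{i2})$, whose unique conflict is with $\bar x_{i1}$ resp.\ $\bar x_{i2}$; since $\tau(x_i)$ is TRUE we used $\{x_{i1},x_{i2}\}$, so neither $\bar x_{i1}$ nor $\bar x_{i2}$ lies in the matching and the new edge is conflict-free. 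The negative-literal case is symmetric. Since claws are pairwise vertex-disjoint and $\mathcal{C}_I$ has no claw--claw conflict, this adds $k$ further edges, yielding a conflict-free matching of size $2n+k$.

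For the converse I would take a conflict-free matching $M$ of size $2n+k$, invoke tightness, and define $\tau(x_i)$ to be TRUE exactly when $M$ restricted to cycle $i$ is $\{x_{i1},x_{i2}\}$; this is well-defined because those two perfect matchings are the only size-$2$ matchings of a four-cycle. For each clause $C_j$ the unique claw edge of $C_j$ in $M$ corresponds to some literal occurrence; if it is the positive occurrence of $x_i$, say $e(x_{i1})$, then the conflict $(\bar x_{i1},e(x_{i1}))\in\mathcal{C}_I$ forces $\bar x_{i1}\notin M$, so $M$ on cycle $i$ is not $\{\bar x_{i1},\bar x_{i2}\}$ and therefore equals $\{x_{i1},x_{i2}\}$, i.e.\ $\tau(x_i)$ is TRUE and $C_j$ is satisfied; the negative case is symmetric. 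Hence every clause is satisfied and $I$ is TRUE.

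The combinatorics here is light; I expect the only real care needed to be the bookkeeping of the gadget: verifying that each clause edge participates in \emph{exactly one} conflict and that it is with the opposite cycle edge (so that choosing the perfect matching on cycle $i$ frees precisely the claw edges of the literal made TRUE), and observing that the $(3,B2)$ restriction is used solely to make the naming scheme $e(x_{i1}),e(x_{i2})$ (and $e(\bar x_{i1}),e(\bar x_{i2})$) exhaustive and unambiguous — each literal has exactly two occurrences. Once that correspondence is pinned down, the component-wise maximum-matching bound does all the heavy lifting and both implications fall out.
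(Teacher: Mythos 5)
The paper does not actually prove this lemma: it is imported verbatim, with citation, from the Darmann et al.\ reference on paths/matchings with conflicts, so there is no in-paper argument to measure you against. Your proposal is a correct, self-contained verification of the reduction, and it is essentially the intended argument behind the cited result: the component-wise bound (at most $2$ edges per variable four-cycle, at most $1$ per clause claw, giving $2n+k$), the observation that only cycle--claw conflicts exist, the assignment-to-matching map choosing $\{x_{i1},x_{i2}\}$ or $\{\bar x_{i1},\bar x_{i2}\}$ per truth value plus one witness claw edge per clause, and the converse via tightness (a size-$2n+k$ conflict-free matching must induce one of the two perfect matchings on each $C_4$ and one edge per claw, and the conflict $(\bar x_{i\ell}, e(x_{i\ell}))$ then forces the cycle matching consistent with the chosen literal). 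Your bookkeeping matches the construction as stated in the paper — each claw edge has exactly one conflict, with the opposite-polarity cycle edge of the same occurrence index — and you correctly identify that the $(3,B2)$ restriction is only needed to make the occurrence labels $e(x_{i1}),e(x_{i2}),e(\bar x_{i1}),e(\bar x_{i2})$ exhaustive and unambiguous. What your write-up adds over the paper is precisely this explicit proof; what the paper buys by citing is brevity, since the lemma is used only as a black box in the NP-hardness reduction of Proposition~\ref{prop:assignment-NP-hard}. I see no gap.
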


\begin{prop}\label{prop:assignment-NP-hard}
It is NP-hard to decide whether the core of quadratic matching game is empty or not even if the characteristic function is submodular and the graph is bipartite.
\end{prop}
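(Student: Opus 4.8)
The plan is to give a polynomial-time many-one reduction from $(3,B2)$-SAT to the decision problem ``is the core of a bipartite quadratic matching game with submodular objective non-empty?'', reusing the graph $G_I$ and conflict set $\mathcal{C}_I$ constructed above and Lemma~\ref{lem:NP-hard}, and then reading off core non-emptiness from the characterization recorded just before Proposition~\ref{prop:q-matching-core} (which is Theorem~\ref{thm:core_nonempty} specialized to the quadratic matching game).

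First I would fix the game. Given an instance $I$ of $(3,B2)$-SAT with $n$ variables and $k$ clauses, build $G_I=(V,E)$ with $|E|=m$ and $\mathcal{C}_I\subseteq E\times E$ exactly as described preceding the statement; this is polynomial, and $G_I$ is bipartite because every connected component is either a $4$-cycle or a claw $K_{1,3}$. On $G_I$ define the quadratic matching game with $b:=\mathbf{1}_m$ (unit edge weights) and $Q=(q_{ij})\in\mathbb{R}^{m\times m}$ given by $q_{ii}:=0$, $q_{ij}:=-1$ if $\{i,j\}\in\mathcal{C}_I$, and $q_{ij}:=0$ otherwise. Then $Q=Q^{T}$ has zero diagonal and non-positive off-diagonal entries, so $f(x)=b^{T}x+x^{T}Qx$ is submodular (equivalently individually subadditive), assumptions (a)--(c) hold, and the incidence matrix $A$ has no zero column. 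Since the diagonal of $Q$ is $\mathbf{0}_m$, the basis-linear relaxation is simply $F(x)=\mathbf{1}_m^{T}x$.

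Next I would evaluate the two quantities appearing in the non-emptiness criterion. On the relaxed side, $\nu_{\mathbb{R}^m_{+},F}(\mathbf{1}_n)=\max\{\mathbf{1}_m^{T}x:Ax\le\mathbf{1}_n,\ x\ge\mathbf{0}_m\}$ is the value of the fractional matching LP of $G_I$; since $G_I$ is bipartite its matching polytope is integral, so this equals $\mu(G_I)$, the maximum matching number, and the component structure ($n$ disjoint $4$-cycles giving $2$ each, $k$ disjoint claws giving $1$ each) yields $\mu(G_I)=2n+k$. On the restricted side, $\nu_{\overline{\mathbb{B}^m},F}(\mathbf{1}_n)=\max\{\mathbf{1}_m^{T}x:Ax\le\mathbf{1}_n,\ q_{ij}x_ix_j=0\ \forall i\ne j,\ x\in\mathbb{B}^m\}$ is precisely the maximum cardinality of a matching of $G_I$ whose edges are pairwise non-conflicting, i.e.\ the maximum size of a matching of $G_I$ respecting $\mathcal{C}_I$ (this is also the quantity $\max_i v_i$ of Proposition~\ref{prop:q-matching-core}, since every conflict-free edge set is a clique of the conflict graph and hence lies inside some maximal clique $V_i$).

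Finally, the criterion says the core is non-empty if and only if $\nu_{\overline{\mathbb{B}^m},F}(\mathbf{1}_n)=\nu_{\mathbb{R}^m_{+},F}(\mathbf{1}_n)=2n+k$; as $2n+k=\mu(G_I)$ is the unconstrained maximum, this holds if and only if $G_I$ with conflicts $\mathcal{C}_I$ admits a matching of size $2n+k$, which by Lemma~\ref{lem:NP-hard} holds if and only if $I$ is satisfiable. Because $(3,B2)$-SAT is NP-complete, the reduction is polynomial, and it preserves submodularity of $f$ and bipartiteness of the graph, deciding core non-emptiness (equivalently, emptiness) is NP-hard, which is the claim. I expect the only delicate points to be the bookkeeping identifying $\nu_{\overline{\mathbb{B}^m},F}(\mathbf{1}_n)$ with the maximum conflict-respecting matching and $\nu_{\mathbb{R}^m_{+},F}(\mathbf{1}_n)$ with $2n+k$ (the latter using integrality of the bipartite matching polytope), plus verifying that the claw/cycle incidence and conflict structure are copied faithfully from~\citep{darmann2011paths} so that Lemma~\ref{lem:NP-hard} applies verbatim; everything else is routine.
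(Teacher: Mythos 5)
Your proposal is correct and follows essentially the same route as the paper: the identical reduction from $(3,B2)$-SAT via $G_I$ and $\mathcal{C}_I$ with $b=\mathbf{1}_m$, $q_{ij}=-1$ on conflicting pairs and $0$ otherwise, combined with the core non-emptiness characterization (Theorem~\ref{thm:core_nonempty} / Proposition~\ref{prop:q-matching-core}) and Lemma~\ref{lem:NP-hard}. The only difference is that you spell out details the paper leaves implicit (submodularity of the constructed $f$, bipartiteness of $G_I$, and that $\nu_{\mathbb{R}^m_{+},F}(\mathbf{1}_n)=2n+k$ via integrality of the bipartite matching polytope and the component structure), which is fine.
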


\begin{proof}
We show a reduction from the $(3,B2)$-SAT problem.
Given an instance $I$ of $(3,B2)$-SAT, we construct a bipartite graph $G_I=(V,E)$ with conflicts $\mathcal{C}_I$ as described above.
Set $b_i=1$ for all $i\in E$, $q_{ij}=-1$ for all $(i,j)\in \mathcal{C}_I$ and $q_{ij}=0$ otherwise.
By Proposition~\ref{prop:q-matching-core}, the core is non-empty if and only if there exists a matching $x$ with size $2n+k$ such that $q_{ij}=0$, $\forall i\neq j\in\text{supp}(x)$.
Thus, $(i,j)\notin \mathcal{C}_I$ for any $\forall i,j\in\text{supp}(x)$, i.e., $x$ satisfies the conflicts $\mathcal{C}_I$.
Therefore, the core is non-empty if and only if $I$ is TRUE by Lemma~\ref{lem:NP-hard}.
\end{proof}

\subsection{Combinatorial ratio games}

We consider combinatorial ratio games in this section. 
\begin{eqnarray}
\label{game:packing-subadditive-ratio}\mathbb{B}^n\ni w\mapsto \nu_{\mathbb{B}^m, f}(w):=\max\limits_{x\in \mathbb{B}^m} \left[f(x):=\frac{c^T x}{d_0+d^T x}: Ax\le w\right]\in  \mathbb{R},
\end{eqnarray}
under the assumptions (a-c), where $d\in \mathbb{R}^m_{+}, c\in \mathbb{R}^m_{++}, d_0\in \mathbb{R}_{++}$, and $X=\mathbb{B}^m$.
Then $f$ is individually subadditive from Proposition~\ref{prop:id-sub}(vi), and its basis-linear relaxation is as follows:
\[
F(x)=\left(\frac{c_1}{d_0+d_1},\ldots,\frac{c_m}{d_0+d_m}\right)^T x. 
\]
Moreover, we have
\begin{eqnarray*}
\nonumber X=\mathbb{B}^m\implies \overline{X}=\overline{\mathbb{B}}^n&=&\left\{x\in \mathbb{B}^m: f(x)=F(x)\right\}\\
\nonumber&=&\left\{x\in\mathbb{B}^m: \frac{c^T x}{d_0+d^T x}=\left(\frac{c_1}{d_0+d_1},\ldots,\frac{c_m}{d_0+d_m}\right)^T x\right\}\\
\nonumber&=&\left\{x\in\mathbb{B}^m: \sum_{i\in[m]}\frac{c_ix_i\left(d^Tx-d_i\right)}{(d_0+d_i)(d_0+d^Tx)}=0 \right\}\\
\nonumber&\overset{x\in\mathbb{B}^n\implies c_ix_i\left(d^Tx-d_i\right)\ge 0}{=}&\left\{x\in\mathbb{B}^m: c_ix_i\left(d^Tx-d_i\right)=0, i\in [m]\right\}\\
\label{eq:ratio-f=F}&\overset{m\ge 2, c\in\mathbb{R}_{++}}{=}&
\left\{x\in\mathbb{B}^m: \sum_{i\in[m]}x_i\le 1 \text{ or } d_ix_i=0, \forall i\in [m]\right\}
\end{eqnarray*}

\begin{prop}\label{prop:ratio-game} Assume that $m\ge 2$, The combinatorial ratio game has a non-empty core if and only if 
\[
\nu_{\mathbb{R}^m_{+},F}=\max\left\{\max\limits_{i\in [m]} \frac{c_i}{d_0+d_i}, \max\limits_{x\in \mathbb{B}^{|K|}} \left[\sum_{k\in K}\frac{c_k}{d_0}x_k: \overline{A}x\le \mathbf{1}_n\right]\right\},
\]
where $K:=\{i\in[m]: d_i=0\}$, and $\overline{A}$ is the matrix obtained from $A$ by removing all columns whose indices are not in $K$ (or equivalently by retaining all columns whose indices are in $K$).
\end{prop}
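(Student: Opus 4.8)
The plan is to invoke Theorem~\ref{thm:core_nonempty}, which says that the core of $\nu_{\mathbb{B}^m,f}$ is non-empty if and only if $\nu_{\overline{\mathbb{B}}^n,F}\left(\mathbf{1}_n\right)=\nu_{\mathbb{R}^m_{+},F}\left(\mathbf{1}_n\right)$, and then to reduce the proposition to an explicit evaluation of the left-hand side $\nu_{\overline{\mathbb{B}}^n,F}\left(\mathbf{1}_n\right)=\max_{x\in\overline{\mathbb{B}}^n}\left[F(x):Ax\le\mathbf{1}_n\right]$. For this I would use the description of the extension set $\overline{\mathbb{B}}^n$ already derived just before the proposition, namely that $x\in\overline{\mathbb{B}}^n$ exactly when $\sum_{i\in[m]}x_i\le 1$ or $d_ix_i=0$ for all $i\in[m]$; equivalently, $\overline{\mathbb{B}}^n=\left\{x\in\mathbb{B}^m:|\text{supp}(x)|\le 1\right\}\cup\left\{x\in\mathbb{B}^m:\text{supp}(x)\subseteq K\right\}$ with $K=\{i\in[m]:d_i=0\}$. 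The right-hand side $\nu_{\mathbb{R}^m_{+},F}\left(\mathbf{1}_n\right)$ is left untouched, so everything comes down to putting the left-hand side into the claimed closed form.

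The computation itself splits along the above union. On the first piece, vectors with at most one nonzero coordinate, every unit vector $\mathbf{e}_i$ is feasible since $A\in\mathbb{B}^{n\times m}$ forces $A\mathbf{e}_i\le\mathbf{1}_n$ automatically, while $\mathbf{0}_m$ contributes value $0$, which is dominated because $c\in\mathbb{R}^m_{++}$ makes $F(\mathbf{e}_i)=\frac{c_i}{d_0+d_i}>0$; hence this piece contributes $\max_{i\in[m]}\frac{c_i}{d_0+d_i}$. On the second piece, vectors supported inside $K$, I would substitute $d_i=0$ for $i\in K$, so that $F(x)=\sum_{k\in K}\frac{c_k}{d_0}x_k$, and observe that since $x_i=0$ for $i\notin K$ the constraint $Ax\le\mathbf{1}_n$ is equivalent, in the reduced variables, to $\overline{A}x\le\mathbf{1}_n$, where $\overline{A}$ retains exactly the columns of $A$ indexed by $K$; this piece therefore contributes $\max_{x\in\mathbb{B}^{|K|}}\left[\sum_{k\in K}\frac{c_k}{d_0}x_k:\overline{A}x\le\mathbf{1}_n\right]$ (with the convention that an empty $K$ gives $0$). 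Taking the maximum over the two pieces yields $\nu_{\overline{\mathbb{B}}^n,F}\left(\mathbf{1}_n\right)=\max\left\{\max_{i\in[m]}\frac{c_i}{d_0+d_i},\ \max_{x\in\mathbb{B}^{|K|}}\left[\sum_{k\in K}\frac{c_k}{d_0}x_k:\overline{A}x\le\mathbf{1}_n\right]\right\}$, and combining with Theorem~\ref{thm:core_nonempty} gives exactly the stated criterion.

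The only delicate point is making sure the two pieces genuinely exhaust $\overline{\mathbb{B}}^n$ and that their overlap is harmless — this is already settled by the displayed derivation of $\overline{\mathbb{B}}^n$ preceding the proposition, where the hypotheses $m\ge 2$ and $c\in\mathbb{R}^m_{++}$ are used — so the remaining work is the bookkeeping of the two sub-maximizations: confirming feasibility of all $\mathbf{e}_i$, checking the column-restriction identity $Ax=\overline{A}x|_K$ when $\text{supp}(x)\subseteq K$, and noting that because we take a maximum, counting vectors (such as $\mathbf{e}_k$ with $k\in K$) in both pieces does no harm. I do not anticipate a serious obstacle beyond keeping the two cases cleanly separated and reading off $\nu_{\mathbb{R}^m_{+},F}\left(\mathbf{1}_n\right)$ correctly on the right-hand side; all ingredients are the model assumptions (a)--(c) together with Theorem~\ref{thm:core_nonempty} and the precomputed form of $\overline{\mathbb{B}}^n$.
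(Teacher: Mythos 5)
Your proposal is correct and follows essentially the same route as the paper: invoke Theorem~\ref{thm:core_nonempty}, use the precomputed description of $\overline{\mathbb{B}}^n$ as the union of the vectors with $\mathbf{1}_m^Tx\le 1$ and those supported in $K$, and split $\nu_{\overline{\mathbb{B}}^n,F}\left(\mathbf{1}_n\right)$ into the maximum of the two corresponding sub-problems, which evaluate to $\max_{i\in[m]}\frac{c_i}{d_0+d_i}$ and $\max_{x\in\mathbb{B}^{|K|}}\bigl[\sum_{k\in K}\frac{c_k}{d_0}x_k:\overline{A}x\le\mathbf{1}_n\bigr]$. Your extra bookkeeping (feasibility of each $\mathbf{e}_i$, the column-restriction identity, and the empty-$K$ convention) only makes explicit steps the paper leaves implicit.
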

\begin{proof}
Theorem \ref{thm:core_nonempty} implies that the core of the game $\nu_{\mathbb{B}^m,f}$ is non-empty if and only if 
\begin{eqnarray*}
\nu_{\mathbb{R}^m_{+},F}\left(\mathbf{1}_n\right)=\nu_{\overline{\mathbb{B}}^n, F}\left(\mathbf{1}_n\right)&:=&\max\limits_{x\in \overline{\mathbb{B}}^n} \left[\left(\frac{c_1}{d_0+d_1},\ldots,\frac{c_m}{d_0+d_m}\right)^T x: Ax\le \mathbf{1}_n\right]\\
&=&\max\limits_{x\in \mathbb{B}^m} \left[\left(\frac{c_1}{d_0+d_1},\ldots,\frac{c_m}{d_0+d_m}\right)^T x: Ax\le \mathbf{1}_n, \left(\mathbf{1}_m^Tx\le 1 \text{ or } d_ix_i=0, \forall i\in [m]\right)\right]\\
&=&\max\left\{\max\limits_{x\in \mathbb{B}^m} \left[\left(\frac{c_1}{d_0+d_1},\ldots,\frac{c_m}{d_0+d_m}\right)^T x: Ax\le \mathbf{1}_n, \mathbf{1}_m^Tx\le 1\right]\right.,\\
&&\left.\max\limits_{x\in \mathbb{B}^m} \left[\left(\frac{c_1}{d_0+d_1},\ldots,\frac{c_m}{d_0+d_m}\right)^T x: Ax\le \mathbf{1}_n, d_ix_i=0, \forall i\in [m]\right]\right\}\\
&=&\max\left\{\max\limits_{i\in [m]} \frac{c_i}{d_0+d_i}, \max\limits_{x\in \mathbb{B}^{|K|}} \left[\sum_{k\in K}\frac{c_k}{d_0}x_k: \overline{A}x\le \mathbf{1}_n\right]\right\},
\end{eqnarray*}
where the second to the last equality follows from the simple fact that 
\[
\sup_{x\in A\cup B} f(x)=\max\left\{\sup_{x\in A} f(x), \sup_{x\in B} f(x)\right\},
\]
and by default, $\sum_{k\in K=\emptyset}\frac{c_k}{d_0}x_k:=0$.
\end{proof}

Proposition~\ref{prop:ratio-game} allows us to test core non-emptiness in polynomial time whenever the LP $\nu_{\mathbb{R}^m_{+},F}\left(\mathbf{1}_n\right)$ and the linear combinatorial problem $\max\limits_{x\in \mathbb{B}^{|K|}} \left[\sum_{k\in K}\frac{c_k}{d_0}x_k: \overline{A}x\le \mathbf{1}_n\right]$ can be both polynomially solvable because $\max\limits_{i\in [m]} \frac{c_i}{d_0+d_i}$ is easy to find. As concrete examples, core non-emptiness cab be tested in polynomial time for the ratio version of classic combinatorial games, such as the maximum flow game, the matching game, etc. 

This characterization becomes more explicit when we consider special constraint matrix $A$, as will be done below. In particular we consider two classes of applications. The first is when $A$ is the identity matrix $I_{n}$, which models games like the portfolio game and the maximum cut game. The second is when $A$ is the incidence matrix of a graph, which models games like the matching game.

\subsubsection{Assortment game}

We assume that $A=I_{n}$ throughout this section, and hence $m=n$. Then From Proposition~\ref{prop:ratio-game}, the core is nonempty if and only if
\begin{eqnarray*}
\nu_{\mathbb{R}^n_{+},F}\left(\mathbf{1}_n\right)&=&\max\limits_{x\in \mathbb{R}^n_{+}} \left[\left(\frac{c_1}{d_0+d_1},\ldots,\frac{c_n}{d_0+d_n}\right)^T x: x\le \mathbf{1}_n\right]=\sum_{i\in [n]}\frac{c_i}{d_0+d_i}=\\
\nu_{\overline{\mathbb{B}}^n, F}\left(\mathbf{1}_n\right)&=&\max\left\{\max\limits_{i\in [m]} \frac{c_i}{d_0+d_i}, \max\limits_{x\in \mathbb{B}^{|K|}} \frac{c_k}{d_0}\right\}.
\end{eqnarray*}

This characterization implies that $K\ne \emptyset$; otherwise, the core is empty because
\[
\nu_{\overline{\mathbb{B}}^n, F}\left(\mathbf{1}_n\right)=\max\limits_{i\in [m]} \frac{c_i}{d_0+d_i}<\sum_{i\in [n]}\frac{c_i}{d_0+d_i}=\nu_{\mathbb{R}^n_{+},F}\left(\mathbf{1}_n\right). 
\]
Since $K\ne \emptyset$, the characterization becomes 
\[
\sum_{i\in [n]}\frac{c_i}{d_0+d_i}=\sum_{k\in K}\frac{c_k}{d_0}. 
\]
implying that $K=[n]\iff d_1=\dots=d_n=0$, the original problem has a linear objective function. So we have the following full characterization for core non-emptiness.
 \begin{prop}\label{prop:ratio-assortment-game} The assortment game (\ref{game:packing-subadditive-ratio}) when $A=I_n$ ($n\ge 2$) has a non-empty core if and only if and only if the original problem has a linear objective function.
 \end{prop}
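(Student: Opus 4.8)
The plan is to read the claim off Proposition~\ref{prop:ratio-game} after specializing the constraint matrix to $A = I_n$, with a short positivity argument finishing the job. First I would note that when $A = I_n$ the reduced matrix $\overline{A}$ (the columns of $A$ indexed by $K := \{i\in[n] : d_i = 0\}$) is just $I_{|K|}$, so the constraint $\overline{A}x\le\mathbf{1}_n$ is vacuous over $\mathbb{B}^{|K|}$ and the combinatorial term in Proposition~\ref{prop:ratio-game} equals $\sum_{k\in K}\frac{c_k}{d_0}$; likewise $\nu_{\mathbb{R}^n_{+},F}(\mathbf{1}_n) = \sum_{i\in[n]}\frac{c_i}{d_0+d_i}$. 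Hence the criterion of Proposition~\ref{prop:ratio-game} becomes: the core of $\nu_{\mathbb{B}^n,f}$ is non-empty if and only if $\sum_{i\in[n]}\frac{c_i}{d_0+d_i} = \max\bigl\{\max_{i\in[n]}\frac{c_i}{d_0+d_i},\ \sum_{k\in K}\frac{c_k}{d_0}\bigr\}$.

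Next I would use that each summand $\frac{c_i}{d_0+d_i}$ is strictly positive (since $c\in\mathbb{R}^n_{++}$, $d_0\in\mathbb{R}_{++}$, $d\in\mathbb{R}^n_{+}$), so with $n\ge 2$ one has the strict inequality $\max_{i\in[n]}\frac{c_i}{d_0+d_i} < \sum_{i\in[n]}\frac{c_i}{d_0+d_i}$. Therefore the maximum on the right can attain $\sum_{i\in[n]}\frac{c_i}{d_0+d_i}$ only through its second term, so the criterion reduces to $\sum_{k\in K}\frac{c_k}{d_0} = \sum_{i\in[n]}\frac{c_i}{d_0+d_i}$. Since $\frac{c_k}{d_0} = \frac{c_k}{d_0+d_k}$ for $k\in K$, cancelling the $K$-part of the right-hand sum leaves $\sum_{i\in[n]\setminus K}\frac{c_i}{d_0+d_i} = 0$, and positivity of every summand forces $[n]\setminus K = \emptyset$, i.e.\ $K = [n]$, i.e.\ $d_1 = \dots = d_n = 0$ --- which is exactly the statement that $f(x) = \frac{c^Tx}{d_0}$ is a linear objective. (This chain also recovers the intermediate observation that $K\ne\emptyset$ is necessary: if $K = \emptyset$ the right-hand maximum is $\max_{i\in[n]}\frac{c_i}{d_0+d_i} < \sum_{i\in[n]}\frac{c_i}{d_0+d_i}$, so the core is empty.)

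For the converse, if $d_1 = \dots = d_n = 0$ then $K = [n]$ and both sides of the criterion equal $\sum_{i\in[n]}\frac{c_i}{d_0}$, so Proposition~\ref{prop:ratio-game} (equivalently Theorem~\ref{thm:core_nonempty}) gives a non-empty core. Because the computational heart of the matter --- the explicit form of $\nu_{\overline{\mathbb{B}}^n,F}(\mathbf{1}_n)$ --- is already in hand from Proposition~\ref{prop:ratio-game}, I do not expect a genuine obstacle; the only points demanding care are verifying that $\overline{A}x\le\mathbf{1}_n$ is truly vacuous when $A = I_n$ and that the strict inequality $\max_{i\in[n]}\frac{c_i}{d_0+d_i} < \sum_{i\in[n]}\frac{c_i}{d_0+d_i}$ holds, which is precisely where the hypotheses $n\ge 2$ and $c\in\mathbb{R}^n_{++}$ are used.
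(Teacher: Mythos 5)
Your proposal is correct and follows essentially the same route as the paper: specialize Proposition~\ref{prop:ratio-game} to $A=I_n$, compute $\nu_{\mathbb{R}^n_{+},F}(\mathbf{1}_n)=\sum_{i\in[n]}\tfrac{c_i}{d_0+d_i}$, use strict positivity and $n\ge 2$ to rule out the single-item term (equivalently $K\ne\emptyset$), and reduce the criterion to $\sum_{k\in K}\tfrac{c_k}{d_0}=\sum_{i\in[n]}\tfrac{c_i}{d_0+d_i}$, forcing $K=[n]$, i.e.\ $d=0$. Your cancellation step making the implication $K=[n]$ explicit is just a slightly more detailed rendering of the paper's argument.
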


As a practical application of this result Proposition~\ref{prop:ratio-assortment-game}, the above game is an assortment game~\footnote{Our game is different from the assortment game with externalities considered in~\citep{nip2022competitive}.} arising from the assortment problem under the multinominal logit model~\citep{talluri2004revenue};
\begin{eqnarray}
\label{game:packing-subadditive-ratio}\mathbb{B}^n\ni w\mapsto \nu_{\mathbb{B}^m, f}(w):=\max\limits_{x\in \mathbb{B}^m} \left[f(x):=\frac{\sum_{i\in [n]}p_ie^{u_i}x_i}{1+\sum_{i\in n}e^{u_i}x_i}: x\le w\right]\in  \mathbb{R},
\end{eqnarray}
where $p\in\mathbb{R}^n_{++}$ is the price vector of $n$ products viewed as players; $u\in\mathbb{R}^n_{++}$ is the utility vector of the $n$ products; and $x\in\mathbb{B}^n$ is the decision binary variable indicating whether products are selected into the assortment or not.  

Proposition~\ref{prop:ratio-assortment-game} says that core-emptiness implies that $e^{u_i}=0\iff u_i=-\infty, i\in [n]$, which may be unrealistic, or said differently, practical assortment game has empty core in general. So we consider approximate core instead via Theorem~\ref{thm:appro-core}.
\begin{fact} The smallest $\gamma$ such that the $\gamma$-core is non-empty for the assortment game is $n$.
\end{fact}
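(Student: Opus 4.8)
The plan is to run the whole argument through Theorem~\ref{thm:appro-core}, which collapses the question to understanding the single number $\rho:=\nu_{\mathbb{R}^n_{+},F}(\mathbf{1}_n)/\nu_{\mathbb{B}^n,f}(\mathbf{1}_n)$: by part (i) of that theorem, the $\gamma$-core of a given assortment instance is non-empty iff $\rho\le\gamma$, and since (\ref{ineq:nonlinear}) gives $\rho\ge 1$, the smallest $\gamma$ that works for \emph{every} $n$-player instance (all $p\in\mathbb{R}^n_{++}$, $u\in\mathbb{R}^n_{++}$) is exactly $\sup_{p,u}\rho$. So the task reduces to showing $\rho\le n$ always and that $\rho$ can be pushed arbitrarily close to $n$, exactly as in the maximum-cut Fact above.

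\textbf{Upper bound $\rho\le n$.} Since $A=I_n$ (so $m=n$) and the basis-linear relaxation has coefficients $f(\mathbf{e}_i)=\tfrac{p_ie^{u_i}}{1+e^{u_i}}>0$, the constraint $Ax\le\mathbf{1}_n$ in $\nu_{\mathbb{R}^n_{+},F}(\mathbf{1}_n)$ is just $x\le\mathbf{1}_n$, giving $\nu_{\mathbb{R}^n_{+},F}(\mathbf{1}_n)=\sum_{i\in[n]}\tfrac{p_ie^{u_i}}{1+e^{u_i}}$. On the other hand each $\mathbf{e}_i$ is feasible in the combinatorial game, so $\nu_{\mathbb{B}^n,f}(\mathbf{1}_n)\ge f(\mathbf{e}_i)=\tfrac{p_ie^{u_i}}{1+e^{u_i}}$ for every $i$, hence $\nu_{\mathbb{B}^n,f}(\mathbf{1}_n)\ge\max_i\tfrac{p_ie^{u_i}}{1+e^{u_i}}\ge\tfrac1n\sum_i\tfrac{p_ie^{u_i}}{1+e^{u_i}}=\tfrac1n\,\nu_{\mathbb{R}^n_{+},F}(\mathbf{1}_n)$. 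Thus $\rho\le n$, so by Theorem~\ref{thm:appro-core}(i) the $n$-core is always non-empty; and by part (ii), when $A=I_n$ its unique member is the unique dual optimum $y=\bigl(\tfrac{p_1e^{u_1}}{1+e^{u_1}},\dots,\tfrac{p_ne^{u_n}}{1+e^{u_n}}\bigr)$.

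\textbf{Matching lower bound.} I would exhibit the symmetric family $p_i=1$, $u_i=t>0$ for all $i$ and send $t\to\infty$. Here $\nu_{\mathbb{R}^n_{+},F}(\mathbf{1}_n)=\tfrac{ne^t}{1+e^t}$, while a size-$k$ assortment has objective $\tfrac{ke^t}{1+ke^t}$, which is increasing in $k$ (its $k$-derivative equals $\tfrac{e^t}{(1+ke^t)^2}>0$), so $\nu_{\mathbb{B}^n,f}(\mathbf{1}_n)=\tfrac{ne^t}{1+ne^t}$ (the full assortment is optimal). Hence $\rho=\tfrac{1+ne^t}{1+e^t}\to n$ as $t\to\infty$. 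Therefore no $\gamma<n$ can serve as a universal bound, and together with $\rho\le n$ this shows the smallest such $\gamma$ is exactly $n$.

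\textbf{Main obstacle.} There is no genuine difficulty once Theorem~\ref{thm:appro-core} is invoked; the only points needing care are the correct reading of ``smallest $\gamma$'' as $\sup_{p,u}\rho$ over $n$-player instances (the value $n$ is never attained by a finite instance but is approached, precisely as $4$ is approached in the maximum-cut Fact), and the elementary monotonicity-in-$k$ computation that pins down $\nu_{\mathbb{B}^n,f}(\mathbf{1}_n)$ for the extremal family.
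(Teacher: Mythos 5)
Your proposal is correct, and it follows the paper's overall template (reduce everything to the ratio $\rho=\nu_{\mathbb{R}^n_{+},F}(\mathbf{1}_n)/\nu_{\mathbb{B}^n,f}(\mathbf{1}_n)$ via Theorem~\ref{thm:appro-core}, then exhibit the symmetric instance $p_i\equiv p$, $e^{u_i}\equiv v\to\infty$ for tightness -- your $p_i=1$, $u_i=t$ family is exactly the paper's example in different clothing). Where you genuinely differ is the upper bound $\rho\le n$. The paper first computes $\nu_{\mathbb{B}^n,f}(\mathbf{1}_n)$ exactly: it sorts the prices $p_1\ge\dots\ge p_n$, uses quasi-linearity of the ratio objective to argue the optimum is attained at a vertex, obtains the sorted-prefix formula $\max_{i}\frac{\sum_{j\in[i]}p_je^{u_j}}{1+\sum_{j\in[i]}e^{u_j}}$, and then bounds the ratio term by term using the ordering of the $p_j$. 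You instead observe only that each singleton $\mathbf{e}_i$ is feasible, so $\nu_{\mathbb{B}^n,f}(\mathbf{1}_n)\ge\max_i f(\mathbf{e}_i)\ge\frac1n\sum_i f(\mathbf{e}_i)=\frac1n\nu_{\mathbb{R}^n_{+},F}(\mathbf{1}_n)$. This is more elementary, needs no sorting or vertex argument, and in fact shows that the $n$-core is non-empty for \emph{any} game of the form (\ref{game:packing-subadditive}) with $A=I_n$ and nonnegative singleton values, not just the MNL assortment objective; what the paper's longer route buys is the exact closed form for $\nu_{\mathbb{B}^n,f}(\mathbf{1}_n)$, which you only need (and easily get) for the symmetric tight family. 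You are also more explicit than the paper that ``smallest $\gamma$'' means $\sup_{p,u}\rho$, i.e.\ the value $n$ is approached but never attained by a finite instance, which is the correct reading and matches the paper's intent.
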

\begin{proof}
Note that the ratio function is quasi-linear and hence there exists an optimal solution at one of its vertices.
\begin{eqnarray*}
\nu_{\mathbb{B}^n, f}\left(\mathbf{1}_n\right)&=&\max_{x\in\mathbb{B}^n}\frac{\sum\limits_{i\in [n]}p_ie^{u_i}x_i}{1+\sum\limits_{i\in [n]}e^{u_i}x_i}=\max_{x\in [0,1]^n}\frac{\sum\limits_{i\in [n]}p_ie^{u_i}x_i}{1+\sum\limits_{i\in [n]}e^{u_i}x_i}=\max_{i\in [n]}\frac{\sum\limits_{j\in [i]}p_je^{u_j}}{1+\sum\limits_{j\in [i]}e^{u_j}},
\end{eqnarray*}
where the labels in the last quantity are sorted such that $p_1\ge\dots\ge p_n$.

So
\[
\frac{\nu_{\mathbb{R}^n_{+}, F}\left(\mathbf{1}_n\right)}{\nu_{\mathbb{B}^n, f}\left(\mathbf{1}_n\right)}=\frac{\sum\limits_{i\in [n]}\frac{p_iv_i}{1+v_i}}{\max\limits_{i\in [n]}\frac{\sum\limits_{j\in [i]}p_jv_j}{1+\sum\limits_{j\in [i]}v_j}}\overset{v_j:=v_j}{=}\frac{\sum\limits_{i\in [n]}\frac{p_iv_i}{1+v_i}}{\max\limits_{i\in [n]}\frac{\sum\limits_{j\in [i]}p_jv_j}{1+\sum\limits_{j\in [i]}v_j}}\overset{p_1\ge\dots\ge p_n}{\le} 
\frac{\sum\limits_{i\in [n]}\frac{\sum\limits_{j\in [i]}p_jv_j}{1+\sum\limits_{j\in [i]}v_j}}{\max\limits_{i\in [n]}\frac{\sum\limits_{j\in [i]}p_jv_j}{1+\sum\limits_{j\in [i]}v_j}}
\le n
\]
Therefore, Theorem~\ref{thm:appro-core} implies that $n$-core is always non-empty with the unique $n$-core member $y_i=\frac{p_ie^{u_i}}{1+e^{u_i}}=\frac{p_i}{1+e^{-u_i}}\in n-\text{core}(\nu_{\mathbb{B}^n, f})$. .

This is the smallest $\gamma$ such that $\gamma$-core exists. Let $p_1=\dots=p_n=p>0$ and $v_1=\dots=v_n=v>0$ Then $\nu_{\mathbb{B}^n, f}\left(\mathbf{1}_n\right)=\max\limits_{i\in [n]}\frac{ipv}{1+iv}=\frac{npv}{1+nv}$, while $\nu_{\mathbb{R}^n_{+}, F}\left(\mathbf{1}_n\right)=\frac{npv}{1+v}$. So $\lim\limits_{v\to\infty} \frac{\frac{npv}{1+v}}{\frac{npv}{1+nv}}=\lim\limits_{v\to\infty} \left[n-\frac{n-1}{1+v}\right]=n$. 
\end{proof}

\subsubsection{Ratio matching game}
We consider a more complicated $A$ than the identity matrix in this section. Given a bi-weighted (undirected) graph $G=(V=[n], E=[m], c, d)$, let $A \in \mathbb{B}^{n\times m}$ be the incidence matrix of $G$ such that $A_{ij}=1$ if vertex $i\in V$ is incident with edge $j\in E$; and 0 otherwise. Each edge is associated with two types of weights $c\in \mathbb{R}^{m}_{++}$ and $d\in \mathbb{R}^{m}_{+}$, where $c$ indicates profit while $d$ indicates cost. We consider a ratio objective function $f(x):=\frac{c^Tx}{d_0+d^Tx}$ reflects the balance between profit and the cost. The characteristic function of the quadratic matching game with vertices as players is defined as follows:
\begin{eqnarray*}
\mathbb{B}^n\ni w\mapsto \nu_{\mathbb{B}^m,f}(w):=\max_{x \in \mathbb{B}^m} \left[f(x):=\frac{c^Tx}{d_0+d^Tx}: Ax\le w\right],
\end{eqnarray*}
where the binary decision variable $x_i$ indicates whether edge $i\in [m]$ is included in the matching; $c\in\mathbb{R}^m_{++}, d_0\in\mathbb{R}^m_{++}$; and $d\in\mathbb{R}^{m}_{+}$.

Let $G^{\prime}=(V^{\prime}, E^{\prime}, c^{\prime},d^{\prime})$ be the graph obtained from $G=(V, E, c,d)$ as follows: $V^{\prime}=V$, $E^{\prime}=E\backslash \{e\in E: d_e>0\}$, $c^{\prime}=c/d_0$, and $d^{\prime}=0$.

Then we have the following Proposition.
\begin{prop}~\label{prop:r-matching-core}
The ratio matching game $\nu$ has a non-empty core if and only if $\nu_{\mathbb{R}^m_{+},F}\left(\mathbf{1}_n\right)$ equals to the larger of $\max\limits_{i\in [m]}\frac{c_i}{d_0+d_i}$ and the maximum weight matching of $G^{\prime}$.
\end{prop}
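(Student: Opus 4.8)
The plan is to obtain this proposition as a direct graph-theoretic reformulation of Proposition~\ref{prop:ratio-game}: for the incidence-matrix choice of $A$, the abstract combinatorial program appearing in that proposition is exactly a maximum-weight matching problem on $G'$, so essentially no new argument is needed beyond a careful translation of notation.

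First I would verify that the ratio matching game meets assumptions (a)--(c), so that Theorem~\ref{thm:nonlinear-packing-game} (equivalently Theorem~\ref{thm:core_nonempty}, and hence Proposition~\ref{prop:ratio-game}) applies. Assumption (a) holds because the incidence matrix of a graph has no zero column --- every edge has at least one endpoint, so $A\mathbf{e}_j\neq\mathbf{0}_n$ for all $j$; assumption (b) holds because $X=\mathbb{B}^m\supseteq\{\mathbf{0}_m,\mathbf{e}_1,\dots,\mathbf{e}_m\}$; and assumption (c) holds because $f(x)=\frac{c^Tx}{d_0+d^Tx}$ with $c\in\mathbb{R}^m_{++}$, $d\in\mathbb{R}^m_{+}$, $d_0>0$ is individually subadditive by Proposition~\ref{prop:id-sub}(vi). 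Assuming $m\ge 2$ (the cases $m\le 1$ being trivial, with non-empty core and both sides of the claimed equivalence holding), Proposition~\ref{prop:ratio-game} then gives: the core is non-empty if and only if $\nu_{\mathbb{R}^m_{+},F}(\mathbf{1}_n)$ equals the maximum of $\max_{i\in[m]}\frac{c_i}{d_0+d_i}$ and $\max_{x\in\mathbb{B}^{|K|}}\left[\sum_{k\in K}\frac{c_k}{d_0}x_k:\overline{A}x\le\mathbf{1}_n\right]$, where $K=\{i\in[m]:d_i=0\}$.

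The key step is then to identify this last quantity with the maximum-weight matching value of $G'$. By construction $E'=E\setminus\{e:d_e>0\}=K$, $V'=V$, $c'=c/d_0$, and $d'=0$, so $\overline{A}$ --- the submatrix of $A$ consisting of exactly the columns indexed by $K$ --- is precisely the incidence matrix of $G'$ (vertices of $V'$ incident to no edge of $E'$ contribute trivially satisfied rows and are harmless). Hence $\{x\in\mathbb{B}^{|K|}:\overline{A}x\le\mathbf{1}_n\}$ is exactly the set of indicator vectors of matchings of $G'$, and for such $x$ the objective $\sum_{k\in K}\frac{c_k}{d_0}x_k=\sum_{e\in E'}c'_e x_e$ is the weight of the corresponding matching under $c'$. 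Therefore
\[
\max_{x\in\mathbb{B}^{|K|}}\Bigl[\sum_{k\in K}\frac{c_k}{d_0}x_k:\overline{A}x\le\mathbf{1}_n\Bigr]
\]
equals the maximum weight of a matching of $G'$ (this value being $0$ when $E'=\emptyset$, matching the empty-sum convention of Proposition~\ref{prop:ratio-game}). Substituting this equality into the criterion of Proposition~\ref{prop:ratio-game} yields exactly the stated equivalence. I do not expect a genuine obstacle here: the only care needed is bookkeeping --- confirming that deleting the edges with $d_e>0$ corresponds to deleting the columns of $A$ outside $K$, that isolated vertices left in $V'=V$ are harmless, and that the empty/degenerate corner cases line up with the conventions of Proposition~\ref{prop:ratio-game}. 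One could additionally remark that both $\nu_{\mathbb{R}^m_{+},F}(\mathbf{1}_n)$ (an LP) and the maximum-weight matching of $G'$ are polynomial-time computable, so core non-emptiness for the ratio matching game is decidable in polynomial time; but this is a remark rather than part of the proof.
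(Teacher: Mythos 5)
Your proposal is correct and follows exactly the route the paper intends: Proposition~\ref{prop:r-matching-core} is stated as an immediate consequence of Proposition~\ref{prop:ratio-game}, with the only content being the observation that for the incidence-matrix choice of $A$ the program $\max_{x\in\mathbb{B}^{|K|}}\bigl[\sum_{k\in K}\tfrac{c_k}{d_0}x_k:\overline{A}x\le\mathbf{1}_n\bigr]$ is precisely the maximum-weight matching problem on $G'$ with weights $c'=c/d_0$. Your verification of assumptions (a)--(c) and the degenerate-case bookkeeping are careful additions but do not change the argument.
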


Since that the weight of maximum matching of $G^{\prime}$ can be solved in polynomial time, we obtain the following Theorem.
\begin{thm}
Whether the core of ratio matching game is empty or not can be determined in polynomial time.
\end{thm}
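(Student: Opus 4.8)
The plan is to obtain the decision procedure directly from Proposition~\ref{prop:r-matching-core} and then argue that each of the quantities appearing in its characterization can be evaluated in polynomial time, so that deciding core non-emptiness reduces to one exact comparison of rationals. Concretely, Proposition~\ref{prop:r-matching-core} says that the core of $\nu$ is non-empty if and only if
\[
\nu_{\mathbb{R}^m_{+},F}\left(\mathbf{1}_n\right)=\max\left\{\max_{i\in[m]}\frac{c_i}{d_0+d_i},\ \mu(G^{\prime})\right\},
\]
where $\mu(G^{\prime})$ denotes the maximum weight (not necessarily perfect) matching of $G^{\prime}=(V^{\prime},E^{\prime})$ with edge weights $c_e/d_0$ for $e\in E^{\prime}=E\backslash\{e\in E: d_e>0\}$. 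Hence it suffices to compute the left-hand side, compute the two terms on the right-hand side, and test the equality.

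First I would dispatch the two cheap pieces. The term $\max_{i\in[m]}c_i/(d_0+d_i)$ is a maximum over $m$ explicitly given rationals, computable in $O(m)$ arithmetic operations. The term $\mu(G^{\prime})$ is a maximum weight matching in a graph on $n$ vertices and at most $m$ edges with nonnegative rational weights; by Edmonds' blossom algorithm this is computable in strongly polynomial time, and since the optimum is attained at an integral vertex of the matching polytope, $\mu(G^{\prime})$ is itself a rational of encoding length polynomial in the input. Finally, $\nu_{\mathbb{R}^m_{+},F}\left(\mathbf{1}_n\right)$ is, by (\ref{eq:NLP-relax-primal}), the optimum of a linear program of polynomial size whose constraint matrix is the incidence matrix $A$ and whose objective has the explicit rational coefficients $c_i/(d_0+d_i)$; its optimum value is a rational of polynomially bounded bit-size and is computable in polynomial time (e.g.\ by the ellipsoid or interior-point method, or combinatorially as a fractional matching LP).

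Since all three quantities are rationals of polynomially bounded encoding length, forming the maximum on the right-hand side and testing its exact equality with $\nu_{\mathbb{R}^m_{+},F}\left(\mathbf{1}_n\right)$ is a single comparison done in polynomial time; combining the three evaluations with this comparison yields the claimed polynomial-time decision procedure. The only point requiring a little care — and the place I would be most careful — is exact arithmetic bookkeeping: one must either extract the exact rational LP optimum, or solve to precision finer than the inverse-exponential-in-bit-size separation that must occur between the two sides when they are unequal, so that the comparison is always decided correctly. There is no genuine obstacle here; all the mathematical content lives in Proposition~\ref{prop:r-matching-core} (and behind it Proposition~\ref{prop:ratio-game} and Theorem~\ref{thm:core_nonempty}), and the present theorem is simply its algorithmic corollary.
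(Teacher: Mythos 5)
Your argument is correct and is essentially the paper's own: the paper likewise deduces the theorem from Proposition~\ref{prop:r-matching-core} by noting that the maximum weight matching of $G'$ (and, implicitly, the other two quantities $\max_{i\in[m]}c_i/(d_0+d_i)$ and the LP value $\nu_{\mathbb{R}^m_{+},F}(\mathbf{1}_n)$) can be computed in polynomial time and then compared. Your additional remarks on exact rational arithmetic and bit-size only make explicit what the paper leaves implicit.
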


\section{Relationship with well-known classes of functions}\label{sec:relationships}

\begin{assumption}
We assume all functions $f: X\mapsto \mathbb{R}$ are games in this work, namely, $f\left(\mathbf{0}_n\right)=0$ (grounded). Moreover, $\{\mathbf{0}_m, \mathbf{e}_1,\dots, \mathbf{e}_m\}\subseteq X\subseteq \mathbb{R}^m$.
\end{assumption}

Various classes of utility functions are defined in the literature, where confusion can arise because identical terminology is often used to convey distinct meanings in different fields. Consequently, to facilitate a clear understanding of this work's results, it is necessary to establish precise and unambiguous definitions for these key terms.

\begin{deff}\label{deff:varity} Given a subset $X\subseteq \mathbb{R}^m$, a function $X \ni x\mapsto f(x)\in\mathbb{R}$ is 
\begin{itemize}
\item individually subadditive if $\{\mathbf{0}_m, \mathbf{e}_1,\dots, \mathbf{e}_m\}\subseteq X$, and 
\begin{equation}\label{deff:is}
f(x)\le x_1f(\mathbf{e}_1)+\dots+x_mf(\mathbf{e}_m), \forall x\in X
\end{equation}

\item subadditive, if $\forall x, y\in X: x+y\in X$, and 
\begin{equation}\label{deff:subadd}
f(x+y)\le f(x)+f(y), \forall x, y\in X
\end{equation}
Note that subadditivity implies nonnegativity (by taking $x=y$) but not monotonicity. 
\item grand fractionally subadditive (a.k.a., balanced), if $X=\mathbb{B}^n$, and 
\begin{equation}\label{deff:gfis}
f\left(\mathbf{1}_m\right)=f\left(\sum_{w\in \mathbb{B}^m} w \lambda(w)\right)\le \sum_{w\in \mathbb{B}^m} \lambda(w) f\left(w\right), \forall \lambda \in \mathbb{R}^{2^m}_{+}: \sum_{w\in \mathbb{B}^m} w \lambda(w)=\mathbf{1}_m.
\end{equation}
\item fractionally subadditive (a.k.a, XOS, totally balanced), if $X=\mathbb{B}^m$, and $\forall v\in\mathbb{B}^m$, 
\begin{equation}\label{deff:gfis}
f(v)\le \sum_{w\in \mathbb{B}^m: w\le v} \lambda(w) f\left(w\right), \forall \lambda \in \mathbb{R}^{2^m}_{+}: \sum_{w\in \mathbb{B}^m: w\le v} w \lambda(w)\ge \mathbf{1}_m\odot v.
\end{equation}
Note that FS implies $f(0)=0$ (by taking $v=w=\mathbf{0}_m, \lambda(w)=1$), monotonicity (by taking $v\le w, \lambda(w)=1$) and hence nonnegativity. 
\item submodular, if $X$ is a sublattice of $\mathbb{R}^m$ and 
\begin{equation}\label{deff:gfis}
f(x\vee y)+f(x\wedge y)\le f(x)+f(y), \forall x, y\in X
\end{equation}
Submodularity implies neither non-negativity nor monotonicity.
\item monotone, if 
\[
f(x)\le f(y), \forall x\le y\in X
\]
\end{itemize}
\end{deff}

We have the following relationship for set functions when $X=\mathbb{B}^n$:
\begin{itemize}
\item For any game:
\[
\text{NSM}\subset \text{ GFS } \subset \text{ SA }\subset \text{ IS}
\]
\item For monotone games:
\[
\text{MSM }\subset \text{FS }\subset \text{ GFS } \subset \text{ SA }\subset \text{ IS}
\]
\end{itemize}
where NSM is the set of non-negative submodular functions; FS is the set of grand fractionally subadditive functions; GFS is the set of grand fractionally subadditive functions; and MSM is the set of monotone submodular functions

Note that individual subadditiveness is a strictly weaker property than submodularity, which implies subadditiveness, and hence further implies individual subadditiveness. Here are examples illustrating the properness of these inclusions.

It is easy to see that any bivariate function $f: \mathbb{B}^2\mapsto \mathbb{R}$ with $f(0,0)=$ is subadditive if and only if it is submodular. The following examples have dimension at least three.

\begin{example} 
\quad

\begin{itemize}
\item Individually subadditive, but not subadditive:
\[
\begin{aligned}
f(0,0,0) &= 0, \\
f(1,0,0) &= 2, & f(0,1,0) &= 2, & f(0,0,1) &= 2, \\
f(1,1,0) &= 3, & f(1,0,1) &= 3, & f(0,1,1) &= 3, \\
f(1,1,1) &= 6.
\end{aligned}
\]
$f(1,1,1)=6>5=f(1,0,0)+f(0,1,1)$ implies that it is not subadditive. 
\item Subadditive, but not submodular:
\[
\begin{aligned}
f(0,0,0) &= 0, \\
f(1,0,0) &= 1, & f(0,1,0) &= 1, & f(0,0,1) &= 1, \\
f(1,1,0) &= 1.5, & f(1,0,1) &= 1.5, & f(0,1,1) &= 1.5, \\
f(1,1,1) &= 3.
\end{aligned}
\]
$f(1,1,0)+f(1,0,1)=3<4=f(1\vee 1,1\vee 0, 0\vee 1)+f(1\wedge 1,1\wedge 0, 0\wedge 1)=f(1,1, 1)+f(1,0, 0)$ implies that $f$ is not submodular. 
\end{itemize}
\end{example}

However, these relations may break down for $X$ other than $\mathbb{B}^n$, which will demonstrated for the quadratic function in Proposition~\ref{prop:id-sub}(v), where individual subadditiveness is actually a stronger property than submodularity for continuous domains such as $X=[0,1]^n$. In general the class of submodular functions and the class of individual subadditive functions on $[0,1]^n$ are not included one way or the other, as demonstrated by the following examples.
\begin{example}\label{exp:sub-is-not-included}\quad\\
\begin{itemize}
\item Submodular but not individually subadditive: 
\[
[0,1]^2\ni (x_1, x_2)\mapsto f(x_1,x_2)=\sqrt{x_1}+\sqrt{x_2}-\sqrt{x_1x_2}
\]
It is submodular via rearrangement inequality, but it is not individually subadditive because
\[
f\left(\frac{1}{4}, \frac{1}{4}\right)=\frac{3}{4}>\frac{2}{4}=\frac{1}{4}f(1,0)+\frac{1}{4}f(0,1)
\]

\item Individually subadditive but not submodular: 
\[
[0,1]^2\ni (x_1, x_2)\mapsto f(x_1,x_2)=3(x_1+x_2)-x_1x_2(3-2x_1x_2)
\]
It is not submodular via cross derivative $f_{1,2}(x_1, x_2)=-3+8x_1x_2$, 
but it is individually subadditive because
\[
f\left(x_1, x_2\right)\le 3x_1+3x_2=x_1f(1,0)+x_2f(0,1)=3(x_1+x_2)
\]
\item Both submodular and individually subadditive
\[
[0,1]^2\ni (x_1, x_2)\mapsto f(x_1,x_2)=x_1+x_2-x_1x_2
\]
\end{itemize}
\end{example}

\section{Operations preserving individual subadditiveness (IS)}\label{sec:subadd-operations-preservation}
Recall the concept of individually subadditiveness. Given a subset $\{\mathbf{0}_m, \mathbf{e}_1,\dots, \mathbf{e}_m\}\subseteq X\subseteq \mathbb{R}^m$, a function $X \ni x\mapsto f(x)\in\mathbb{R}$ is \emph{individually subadditive} if it satisfies: 
\begin{equation}\label{assump:subadditive-cont}
f(x)\le x_1f(\mathbf{e}_1)+\dots+x_mf(\mathbf{e}_m), \forall x\in X
\end{equation}

Therefore, individually subadditiveness of a function depends on the domain. For $X=\mathbb{B}^n$, subadditiveness ($f(x+y)\le f(x)+f(y), \forall x, y\in \mathbb{R}^m$) implies individually subadditive, which may not be true for $X=\mathbb{R}^n$. However, homogeneous subadditive function (i.e., sublinear function) are individually subadditive $X=\mathbb{R}^n$. Moreover, submodular functions are individually subadditive functions for both $X=\mathbb{B}^n$ and $X=\mathbb{R}^n$. 

Note that the conjugate dual operation $\overline{\nu}(w):=\nu\left(\mathbf{1}_n\right)-\nu\left(\mathbf{1}_n-w\right)$ widely used in cooperative game theory does not transform superadditive game to subadditive game.

\begin{prop}\label{prop:id-sub} Here are more rules on checking individually subadditiveness.
\begin{enumerate}[(i)]
\item Conic combination and limit preserves individual subadditiveness/superadditiveness; namely the set of all individual subadditive functions is a closed convex cone (ccc).

\item The indicator function $\delta_X$ is individually superadditive for any $\{\mathbf{0}_m, \mathbf{e}_1,\dots, \mathbf{e}_m\}\subseteq X\subseteq \mathbb{R}^m$.

\item Maximization preserves individual subadditiveness (analogously minimization preserve individual superadditiveness):  If $f, g: X\mapsto \mathbb{R}$ are individual subadditive, the $h:=\max\{f, g\}$ is individual subadditive. Analogously, If $f, g: X\mapsto \mathbb{R}$ are individual superadditive, the $h:=\min\{f, g\}$ is individual superadditive. Here is a quick proof. 
\begin{proof} We only show maximum because minimum follows from $f\wedge g=f+g-f\vee g$. 
\begin{eqnarray*}
\forall x\in X: h(x)&=&f(x)\vee g(x)\le\left(\sum_{i\in [n]}x_if(\mathbf{e}_i)\right)\vee \left(\sum_{i\in [n]}x_ig(\mathbf{e}_i)\right)\\
&\le& \sum_{i\in [n]}x_i [f(\mathbf{e}_i)\vee g(\mathbf{e}_i)]=\sum_{i\in [n]}x_i h(\mathbf{e}_i)
\end{eqnarray*}
\end{proof}

\item Linear transformation: Given two subsets $\{e_1,\dots, e_m\}\subseteq X\subseteq \mathbb{R}^m$ and $Y\subseteq \mathbb{R}^n$, a function $Y\ni y\mapsto f(y)\in\mathbb{R}$, and a matrix $A\in\mathbb{R}^{n\times m}$ such that $Ax\in Y, \forall x\in X$, define 
\[
X\ni x\mapsto g(x):=f(Ax)
\]
Then $f$ is $(x, Y)$-subadditive (namely, $f(x_1y^1+x_2y^2)\le x_1f(y_1)+x_2f(y_2), \forall x=(x_1, x_2)\in X, y_1, y_2\in Y$) implies that $g$ is individually subadditive.
\begin{proof}
\begin{eqnarray*}
\forall x\in X: g(x) &=&f(Ax)=f\left(\sum_{i\in [m]}x_iA\mathbf{e}_i\right)\\
&\overset{f \text{ $(x, Y)$-subadditive}}{\le}& \sum_{i\in [m]}x_if\left(A\mathbf{e}_i\right)=\sum_{i\in [m]}x_i g(\mathbf{e}_i)
 \end{eqnarray*}
\end{proof}

Note that when $X=\mathbb{B}^m$, then $(x, Y)$-subadditive is just subadditive for set functions. However, for $X=\mathbb{R}^m_{+}$, then $(x, Y)$-subadditive is not subadditive (which does not require positive homogeneity) for regular functions, but equivalent to sublinear for any vector space $X$ because any two properties among subadditivity, convexity, and positive homogeneity implies the other other.

\item Quadratic function: $X\ni x\mapsto f(x)=b^Tx+x^TQx\in\mathbb{R}$, where $\{\mathbf{0}_m, \mathbf{e}_1,\dots, \mathbf{e}_m\}\subseteq X\subseteq \mathbb{R}^n_{+}$, $b\in\mathbb{R}^n$ and symmetric matrix $Q\in\mathbb{R}^{n\times n}$. Denote $q:=(q_{11},\dots, q_{nn})^T=(\mathbf{e}_1^TQ\mathbf{e}_1,\dots, \mathbf{e}_n^TQ\mathbf{e}_n)^T$. Then 
\begin{enumerate}[(a)]
 \item $X=\mathbb{B}^n$. $f$ is individually subadditive iff $q_{ij}\le 0, i\ne j$; namely $f$ is submodular.
\item $X=[0,1]^n$. $f$ is individually subadditive iff $q_{ii}\ge 0, i\in [n], q_{ij}\le 0, i\ne j$; namely $f$ is submodular and $Q$ has non-negative diagonal entries.
\item $X=\mathbb{R}^n_{+}$. $f$ is individually subadditive iff $q_{ii}=0, i\in [n], q_{ij}\le 0, i\ne j$; namely $f$ is submodular and $Q$ has zero diagonal entries (a special DR-submodular function~\citep{marinacci2005ultramodular,bian2017continuous}, i.e., the multilinear extension of some submodular set function).
\item $X=\mathbb{R}^n$. $f$ is individually subadditive iff $Q\equiv 0$; namely $f$ is a linear function.
\end{enumerate}
\begin{proof} The basis-linear relaxation of $f$ is
\[
F(x)=b^Tx+\sum_{i\in [n]}x_i\mathbf{e}_i^TQ\mathbf{e}_i=(b+q)^Tx,
\]
 Therefore, our objective is to characterize the matrix $Q$ such that
\begin{eqnarray*}
\delta:=\inf_{x\in X}[F(x)-f(x)]&=&\inf_{x\in X}\left[q^Tx-x^TQx\right]\\
&=&\inf_{x\in X}\left[q^Tx\odot (I_n-x)-x^T\left[\left(I_n-\mathbf{1}_n\mathbf{1}_n^T\right)\odot Q\right]x\right]\\
&=&\inf_{x\in X} \left[(x^T, 1)\begin{pmatrix}
-Q&\frac{q}{2}\\
\frac{q}{2}&0
\end{pmatrix}\begin{pmatrix}x\\ 1\end{pmatrix}\right]\ge 0
\end{eqnarray*}

For (a), $x\in\mathbb{B}^n\implies q^Tx\odot (I_n-x)=0$, and hence
\begin{eqnarray*}
\delta=\min_{x\in\mathbb{B}^n}\left[-2\sum_{i<j}q_{ij}x_ix_j\right]
\end{eqnarray*}
Sufficiency being evident, for necessity, choose $\bar{x}=\mathbf{e}_i+\mathbf{e}_j, \forall i\ne j$, we have
\begin{eqnarray*}
0\le \delta\le F(\bar{x})-f(\bar{x})=-2q_{ij}\implies q_{ij}\le 0
\end{eqnarray*}

For (b), 
\begin{eqnarray*}
\delta&=&\min_{x\in [0,1]^n}\left[\sum_{i\in [n]}q_{ii}x_i(1-x_i)-2\sum_{i<j}q_{ij}x_ix_j\right]
\end{eqnarray*}
Sufficiency being evident, for necessity, $q_{ij}\le 0$ follows from the same argument as that in (a). To show $q_{ii}\ge 0$, choose $\bar{x}=t\mathbf{e}_i, i\in [n], t\in [0,1]$, we have 
\begin{eqnarray*}
0\le \delta \le F(\bar{x})-f(\bar{x})=t(1-t)q_{ii}, \forall t\in [0,1]\implies q_{ii}\ge 0.
\end{eqnarray*}

For (c), 
\begin{eqnarray*}
\delta&=&\min_{x\in \mathbb{R}^n_{+}}\left[\sum_{i\in [n]}q_{ii}x_i(1-x_i)-2\sum_{i<j}q_{ij}x_ix_j\right]
\end{eqnarray*}
Sufficiency being evident, for necessity, $q_{ij}\le 0$ follows from the same argument as that in (a). To show $q_{ii}=0$, choose $\bar{x}=t\mathbf{e}_i, i\in [n], t\ge 0$, we have 
\begin{eqnarray*}
0\le \delta \le F(\bar{x})-f(\bar{x})=t(1-t)q_{ii}, \forall t\ge 0\implies q_{ii}= 0.
\end{eqnarray*}

For (d)
\begin{eqnarray*}
\delta&=&\min_{x\in \mathbb{R}^n_{+}}\left[\sum_{i\in [n]}q_{ii}x_i(1-x_i)-2\sum_{i<j}q_{ij}x_ix_j\right]
\end{eqnarray*}
Sufficiency being evident, for necessity, choose $\bar{x}=t\mathbf{e}_i, i\in [n], t\in\mathbb{R}$, we have 
\begin{eqnarray*}
0\le \delta \le F(\bar{x})-f(\bar{x})=t(1-t)q_{ii}, \forall t\in\mathbb{R}\implies q_{ii}= 0.
\end{eqnarray*}
Moreover, choose $\bar{x}=\mathbf{e}_i\pm\mathbf{e}_j, i\ne j\in [n]$, we have
\begin{eqnarray*}
0\le \delta \le F(\bar{x})-f(\bar{x})=2\pm q_{ij} \implies q_{ij}= 0.
\end{eqnarray*}
\end{proof}
Comments:
\begin{itemize}
\item In (c) and (d), both $\mathbb{R}^n_{+}$ and $\mathbb{R}^n$ are special closed convex cone. More general cones can be considered~\citep{hiriart2010variational}.
\end{itemize}

\item Ratio of two monotone modular functions: A ratio set function 
\[
\mathbb{B}^n\ni x\mapsto f(x):=\frac{c^Tx}{d_0+d^Tx}\in\mathbb{R}, 
\]
is individually subadditive, for any $c, d\in\mathbb{R}^n_{+}, d_0>0$. 
We provide two proofs, the first one is directly showing the individual subadditiveness and the second one  showing the stronger property of grand fractionally subadditive, which implies individual subadditiveness. 
\begin{proof} \quad\\ 
Proof 1: 
\[
F(x)-f(x)=\sum_{i\in [n]}\frac{c_ix_i(d^Tx-d_i)}{(d_0+d_i)(d_0+d^Tx)}=\frac{1}{d_0+d^Tx}\sum_{i\in [n]}\frac{c_ix_i(d^Tx-d_i)}{d_0+d_i}\overset{x\in\mathbb{B}^n}{\ge} 0
\]
Proof 2: For any balanced weights $\lambda\in\mathbb{B}^{2^n}$, we have
\begin{eqnarray*}
\sum_{w\in\mathbb{B}^n}\lambda(w)f(w)-f(\mathbf{1})&=&\sum_{w\in\mathbb{B}^n}\lambda(w)f(w)-\frac{\mathbf{1}^Tc}{d_0+\mathbf{1}^Td}\\
&\overset{\lambda \text{ balanced}}{=}&\sum_{w\in\mathbb{B}^n}\lambda(w)f(w)-\frac{\sum_{w\in\mathbb{B}^n}\lambda(w)w^Tc}{d_0+\mathbf{1}^Td}\\
&\overset{c^Tw=f(w)(d_0+d^Tw)}{=}&\sum_{w\in\mathbb{B}^n}\lambda(w)f(w)-\frac{\sum_{w\in\mathbb{B}^n}\lambda(w)f(w)(d_0+d^Tw)}{d_0+\mathbf{1}^Td}\\
&=&\sum_{w\in\mathbb{B}^n}\lambda(w)f(w)\left(1-\frac{d_0+w^Td}{d_0+\mathbf{1}^Td}\right)\overset{w\le \mathbf{1}}{\ge} 0\\
\end{eqnarray*}
\end{proof}

Comments
\begin{enumerate}[(i)]

\item For $x\in [0,1]^n$: the proofs above break down and $f$ is no longer individually subadditive. Here is a quick counter-example: $n=1$, $c_1=d_1=d_0=1$. Then $f(x)=\frac{x}{1+x}$ is individually superadditive rather than individually subadditive because
\[
f(x)-F(x)=f(x)-xf(1)=\frac{x(1-x)}{2(1+x)}\overset{x\in [0,1]}{\ge} 0
\]
This implies that univariate ratio function over the binary domain $\mathbb{B}$ is actually individually additive. 

\item $f$ is not balanced (i.e., not fractionally subadditive): $m=2, c=(1,1), d=(1,2)$.
\item $f$ is submodular for $m\le 2$, but non-submodular for $m\ge 3$: Here is a quick counter example: $m=3, c=(7,1,1), d=(1,1,1)$. 
\[
f(1,1,0)+f(1,0,1)=4+4=8<10=3+7=f(1,1,1)+f(1,0,0).
\]

\end{enumerate}

\item Parametric optimization: Given $c\in \mathbb{R}^{m}_{+}$, and $S\subseteq \mathbb{B}^{m\times n}$, assume that $S$ satisfies $\left(\mathbf{0}_m, \mathbf{0}_n\right)\in S$, and 
\[
\forall \left(y(\mathbf{e}_i), \mathbf{e}_i\right)\in S, i\in [n], \forall (y, x)\in S: y\ge \sum_{i\in [n]} y(\mathbf{e}_i)x_i. 
\]
Then $g$ below is individually superadditive. 
\[
\mathbb{B}^n\ni x\mapsto g(x):=\min_{y\in\mathbb{B}^m}[c^Ty: (y, x)\in S]
\]
\begin{proof}
Let $(y(x), x)$ and $\left(y\left(\mathbf{e}_i\right), \mathbf{e}_i\right)\in S, i\in [n]$ such that $y(x)$ and $y\left(\mathbf{e}_i\right)$ are the optimal solutions corresponding to $x$ and $\mathbf{e}_i\in \mathbb{B}^n$, respectively. Then 
\begin{eqnarray*}
g(x)&=&c^Ty(x)\ge \sum_{i\in [n]} c^Ty\left(\mathbf{e}_i\right)x_i=\sum_{i\in [n]}g\left(\mathbf{e}_i\right)x_i
\end{eqnarray*}
\end{proof}

\end{enumerate}
\end{prop}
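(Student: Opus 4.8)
The plan is to treat every part of the proposition as an assertion about the sign of $F(x)-f(x)$ (or its infimum over the relevant domain $X$), where $F(x)=x_1f(\mathbf{e}_1)+\dots+x_mf(\mathbf{e}_m)$ is the basis-linear relaxation; individual subadditivity is exactly $F\ge f$ on $X$, individual superadditivity is $F\le f$ on $X$, and $F$ is linear and agrees with $f$ on $\mathbf{0}_m,\mathbf{e}_1,\dots,\mathbf{e}_m$. Once this dictionary is in place nearly everything is routine; the only part requiring real care is the necessity direction of the quadratic characterization (v).

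For (i), given individually subadditive $f,g$ and $\alpha,\beta\ge 0$, I would simply add the scaled inequalities to get $(\alpha f+\beta g)(x)\le\sum_i x_i(\alpha f+\beta g)(\mathbf{e}_i)$, note that the zero function qualifies, and pass the finite inequality $f_k(x)\le\sum_i x_i f_k(\mathbf{e}_i)$ to a pointwise limit; this shows the set of individually subadditive functions is a nonempty closed convex cone, and the superadditive statement is identical with reversed inequalities. For (ii), since $\mathbf{e}_1,\dots,\mathbf{e}_m\in X$ we have $\delta_X(\mathbf{e}_i)=0$ for every $i$, so $\sum_i x_i\delta_X(\mathbf{e}_i)\equiv 0$, whereas $\delta_X(x)\in\{0,+\infty\}$ is always $\ge 0$; hence $\delta_X(x)\ge\sum_i x_i\delta_X(\mathbf{e}_i)$, i.e.\ $\delta_X$ is individually superadditive. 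Parts (iii) and (iv) are one-line substitutions: for (iii) I would use $\max\{\sum_i x_i a_i,\sum_i x_i b_i\}\le\sum_i x_i\max\{a_i,b_i\}$ with $a_i=f(\mathbf{e}_i),b_i=g(\mathbf{e}_i)$, and deduce the minimum case by applying the maximum statement to $-f,-g$; for (iv) I would write $Ax=\sum_i x_i(A\mathbf{e}_i)$ and invoke the assumed $(x,Y)$-subadditivity of $f$ at the points $A\mathbf{e}_i\in Y$.

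For (v) the plan is: compute $F(x)=(b+q)^Tx$ with $q=(q_{11},\dots,q_{nn})^T$, so $F(x)-f(x)=q^Tx-x^TQx=\sum_i q_{ii}(x_i-x_i^2)-2\sum_{i<j}q_{ij}x_ix_j$, and characterize when $\delta:=\inf_{x\in X}(F(x)-f(x))\ge 0$. Sufficiency in each of the four domains is immediate from inspecting the signs of the two summands (on $\mathbb{B}^n$ the diagonal part vanishes; on $[0,1]^n$ it is $\ge 0$ when $q_{ii}\ge 0$; on $\mathbb{R}^n_+$ and $\mathbb{R}^n$ it forces $q_{ii}=0$). Necessity is obtained from a short list of tailored feasible test points: $x=\mathbf{e}_i+\mathbf{e}_j$ gives $-2q_{ij}\ge 0$ in every domain; $x=t\mathbf{e}_i$ with $t\in(0,1)$ gives $t(1-t)q_{ii}\ge 0$ hence $q_{ii}\ge 0$ on $[0,1]^n$; the same family with $t>1$ (feasible in $\mathbb{R}^n_+,\mathbb{R}^n$) gives $q_{ii}\le 0$, forcing $q_{ii}=0$; and $x=\mathbf{e}_i-\mathbf{e}_j$ (feasible only in $\mathbb{R}^n$) gives $2q_{ij}\ge 0$, forcing $Q\equiv 0$ there. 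For (vi) I would establish the identity $F(x)-f(x)=\frac{1}{d_0+d^Tx}\sum_i\frac{c_ix_i(d^Tx-d_i)}{d_0+d_i}$ and note that on $\mathbb{B}^n$ each summand is $\ge 0$ because $x_i=1$ implies $d^Tx\ge d_i$; as an alternative I would prove the stronger balancedness directly from $c^Tw=f(w)(d_0+d^Tw)$ and $\sum_w\lambda(w)w=\mathbf{1}$. Finally, for (vii) I would take optimal $y(x)$ and $y(\mathbf{e}_i)$ for the parametric program and combine the hypothesis $y(x)\ge\sum_i y(\mathbf{e}_i)x_i$ on $S$ with $c\ge 0$ to get $g(x)=c^Ty(x)\ge\sum_i(c^Ty(\mathbf{e}_i))x_i=\sum_i g(\mathbf{e}_i)x_i$.

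The main obstacle is the necessity half of (v): one must check that the proposed test points genuinely lie in the domain under consideration and that, taken together, they exhaust all the constraints that $\delta\ge 0$ imposes on $Q$ — i.e.\ that no additional hidden condition on $Q$ beyond the listed ones is needed. A secondary subtlety is getting the ratio identity in (vi) exactly right, since the sign argument works on $\mathbb{B}^n$ but not on $[0,1]^n$, which is precisely why the domain matters there.
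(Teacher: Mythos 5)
Your proposal is correct and follows essentially the same route as the paper: each part is reduced to the sign of $F(x)-f(x)$ for the basis-linear relaxation $F$, with the same one-line verifications for (i)--(iv) and (vi)--(vii) and the same decomposition $q^Tx-x^TQx=\sum_i q_{ii}x_i(1-x_i)-2\sum_{i<j}q_{ij}x_ix_j$ together with the same test points $\mathbf{e}_i+\mathbf{e}_j$, $t\mathbf{e}_i$, and $\mathbf{e}_i-\mathbf{e}_j$ for the necessity half of (v). The only difference is cosmetic: your $2q_{ij}\ge 0$ computation at $\mathbf{e}_i-\mathbf{e}_j$ is cleaner than the paper's (apparently typo-laden) ``$2\pm q_{ij}$'' display, and you derive the minimum case of (iii) by negation rather than via the identity $f\wedge g=f+g-f\vee g$, which amounts to the same thing.
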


\section{Concluding remarks and open questions}\label{sec:conclusion}

To the best of our knowledge, this systematic investigation of nonlinear games induced by subadditive objective functions is the first to address nonlinearity in both the objective function and the constraints. This framework provides new tools to solve games previously considered intractable. We have demonstrated the utility of these methodologies by fully characterizing the core for some specific instances, particularly nonlinear combinatorial games involving quadratic and ratio objective functions over Boolean lattices. Therefore, further studies on nonlinear games other than quadratic and ratio objective functions $f$ and more complex constraint sets $X$ are full of opportunities for our newly developed tools to shine. 

Nevertheless, many games remain outside the scope of our current framework. The following examples highlight areas that warrant further investigation.

\begin{enumerate}[(i)]
\item A full characterization of the core is needed for the following game where $f$ is individual superadditive (e.g., $f$ is supermodular) rather than individual subadditive:
\begin{eqnarray*}
\mathbb{B}^n\ni w\mapsto \nu(w)&:=&\max_{x\in X\subseteq \mathbb{R}^m_{+}} \left[f(x): Ax\ge w\right]\in  \mathbb{R}
\end{eqnarray*}

Hence, this game can be viewed as the "opposite" game of (\ref{game:packing-subadditive}). Some sufficient conditions for core non-emptiness under further model restrictions are known. For instance, $\nu$ is supermodular when the constraint set is a sublattice and the objective function is supermodular ~\citep[Theorem 2.7.6]{Topkis1998}. This supermodularity is a sufficient condition for the non-emptiness of the core.

\item A full characterization of the core is needed for the following LP production game, viewed as a cost game rather than a revenue game. Or in the dual program, the players' information $w$ appears in the objective function rather than in the right-hand side, in contrast to the covering game (Theorem~\ref{thm:nonlinear-covering-game}):
\begin{eqnarray*}
\mathbb{B}^n\ni w\mapsto \nu(w)&:=&\max_{x\in \mathbb{R}^m_{+}} \left[c^Tx: Ax\ge b(w)\right]\in  \mathbb{R}\\
&\overset{\text{LP duality}}{=}&\min_{y\in \mathbb{R}^m_{+}} \left[b(w)^Ty: A^Ty\le c\right]\in  \mathbb{R}
\end{eqnarray*}
where matrix $A\in \mathbb{R}^{n\times m}, c\in \mathbb{R}^{m}$, and $b(w)=(b_1(w), \dots, b_n(w))^T$ where each game $b_i(w): \mathbb{B}^{n}\mapsto \mathbb{R}$ is grounded, namely, $b_i\left(\mathbf{0}_n\right)=0, i\in [n]$.

Consequently, the game defined above differs from the models in \citep{owen1975core,granot1986generalized}, which are formulated as \emph{revenue} games. The main result in those works is that the balancedness of the component games $b_i(w), i\in [n]$ imply the same for $v(w)$. However, this type of games also appears in many applications such as the the economic lot-sizing  (cost) game \citep{chen2016duality}. For specific choices of $A$ and $b(w)$, \citep{chen2016duality} demonstrate that even when the core is non-empty, not all optimal dual solutions necessarily belong to it. On the positive side, another relevant result is that $\nu$ is submodular when the constraint set is a sublattice and the objective function is submodular in $(w, x)$~\citep[Theorem 2.7.6]{Topkis1998}. This submodularity serves as a sufficient condition for the non-emptiness of the core.

Note that submodular set functions (a subclass of individual subadditive functions) are neither convex nor concave, yet they possess structural properties of both. Consequently, both submodular minimization and maximization are meaningful depending on the application. This is analogous to linear functions, which are simultaneously convex and concave, rendering both minimization and maximization well-posed. The current question follows this spirit.

\end{enumerate}

Another promising avenue for future research is the study of the approximate core for nonlinear games. 
Theorem~\ref{thm:appro-core} also offers a foundation for investigating such games, expanding a field that has historically focused on linear game frameworks.

\bibliographystyle{apalike}
\bibliography{Submodular}

\section{Appendix: proofs}

\subsection{Appendix A: Proof of Proposition~\ref{prop:relax-exten}}

\begin{proof}
No matter relaxation or extension, we always have the following relationship:
\begin{eqnarray}\label{eq:relax-exten-1}
F(y^*)\overset{y^*\in\argmax\limits_{y\in X_2} F(y), x^*\in X_1\subseteq X_2}{\ge} F(x^*)
\overset{F|_{X_1}\ge f}{\ge} f(x^*).
\end{eqnarray}
If $F(y^*)=f(x^*)$, then we have equalities throughout, in particular, $F(y^*)=F(x^*)\implies x^*\in \argmax\limits_{y\in X_2} F(y)$; namely when both programs have the same optimal value, then any optimal solution of the original program is also an optimal solution of the relaxed problem. This shows (i)(a) and the ``if" part of (ii)(a). 

If $y^* \in X_1$, then $f(x^*)\ge (y^*)$, together with (\ref{eq:relax-exten-1}), show (i)(b). For an example where it may happen that $y^*\notin \argmax\limits_{x\in X_1} f(x)$, see Example~\ref{exp:int-imlies-no-non-empty-core} in Section~\ref{subsec:nl-game-main-result}.

To show (i)(c), we have  
\begin{eqnarray}\label{eq:relax-relax-2}
f(x^*)\overset{x^*\in\argmax\limits_{x\in X_1} f(x), y^*\in X_1}{\ge} f(y^*)=F(y^*).
\end{eqnarray}
(\ref{eq:relax-exten-1})-(\ref{eq:relax-relax-2}) together implies equalities throughout, and hence $F(y^*)=f(y^*)=f(x^*)=F(x^*)$. In particular, $f(y^*)=f(x^*)$, implying that if an optimal solution to the relaxed problem is also a feasible solution to the original problem and their optimal values are the same, then it is necessarily an optimal solution to the original problem.

From now now, assume $F$ is an extension; namely $F|_{X_1}=f$. 

If $x^*\in\argmax\limits_{y\in X_2} F(y)$, then 
\[
f(x^*)\overset{F|_{X_1}=f}{=}F(x^*)\overset{x^*, y^*\in\argmax\limits_{y\in X_2} F(y)}{=}F(y^*),
\]
which shows the ``only if" part of (ii)(a).

If $y^*\in X_1$, then we have further 
\begin{eqnarray}\label{eq:relax-exten-2}
f(x^*)\overset{x^*\in\argmax\limits_{x\in X_1} f(x), y^*\in X_1}{\ge} f(y^*)\overset{F|_{X_1}=f}{=}F(y^*).
\end{eqnarray}
(\ref{eq:relax-exten-1})-(\ref{eq:relax-exten-2}) Together implies equalities throughout, and hence $F(y^*)=f(y^*)=f(x^*)=F(x^*)$. In particular, $f(y^*)=f(x^*)$, implying that if an optimal solution to the extended problem is also a feasible solution to the original problem, then it is necessarily an optimal solution to the original problem. This shows (ii)(b). 

\end{proof}

\subsection{Appendix B: Proof of Theorem~\ref{thm:nonlinear-packing-const-X-generators}}

\begin{proof}
We first show the following two sets are the same. 
\begin{eqnarray*}
\Omega_1&:=&\left\{y\in\mathbb{R}^n_{+}: a^Ty\ge \nu_{X, f}(a), \forall a\in\mathbb{B}^n\right\}\\
&=&\left\{y\in\mathbb{R}^n_{+}: a^Ty\ge f(x), \forall x\in \{x\in X: Ax\le a\}, \forall a\in \mathbb{B}^n\right\}\\
\Omega_2&:=&\left\{y\in\mathbb{R}^n_{+}: y^TAQ\ge (f(q_1),\dots, f(q_k))\right\}
\end{eqnarray*}

For one direction, choosing $a=Aq_j\in \mathbb{B}^n, j\in [m]$ implies that $x=q_j\in \{x\in X: Ax\le a=Aq_j\}$ because $\{\mathbf{0}_m, q_1,\dots, q_k\}\subseteq X\subseteq \mathbb{R}^m_{+}$.
\begin{eqnarray*}
y\in \Omega_1&\overset{a=Aq_j, x=q_j, j\in [k]}{\implies}& \left[ (Aq_j)^Ty\ge f(q_j), j\in [k], y\ge 0 \right]\\
&\iff& \left[y^TAQ\ge (f(q_1),\dots, f(q_k)), y\ge 0\right] \implies y\in \Omega_2
\end{eqnarray*}
For the other direction, $\forall a\in \mathbb{B}^n, \forall x\in \{x\in X: Ax\le a\}$:
\begin{eqnarray*}
y\in \Omega_2&\implies& a^Ty\overset{y\ge 0}{\ge} y^TAx\overset{(\ref{eq:x-rep-Q})}{=}
[y^TAQ]\left[Q^{\dagger}x\right]\overset{Q^{\dagger}x\ge 0 \text{ from } (\ref{eq:x-rep-Q}) }{\ge}\\ 
&&\left(f(q_1),\dots, f(q_k)\right)Q^{\dagger}x\overset{\text{Def.}~\ref{deff:indiv-supadd_Q}}{\ge}
f\left(QQ^{\dagger}x\right)=f(x)\\
&\implies& y\in \Omega_1,
\end{eqnarray*}
where we used the fact that $y\ge 0$ due to the monotonicity of the game $\nu$ (However, the nonnegativity of $y$ is not needed for partition game due to $Ax=w$). 

Then we have the following relationships:  
\begin{eqnarray*}
\nonumber \nu_{X, f}\left(\mathbf{1}_n\right)&\le &\min\limits_{y\in\mathbb{R}^{n}_{+}}\left\{\mathbf{1}_n^Ty: a^Ty\ge \nu_{X, f}(a), \forall a\in\mathbb{B}^n\right\}\\
\nonumber&=&\min\limits_{y\in\mathbb{R}^{n}}\left\{\mathbf{1}_n^Ty: y\in \Omega_1\right\}\overset{\Omega_1=\Omega_2}{=}\min\limits_{y\in\mathbb{R}^{n}} \left[\mathbf{1}_n^Ty: y\in \Omega_2\right]\\
\nonumber&\overset{(\ref{eq:NLP-relax-primal-Q-basis})-(\ref{eq:NLP-relax-dual-Q-basis})}{=}&\nu_{\text{cone}(X), F}\left(\mathbf{1}_n\right),
\end{eqnarray*}

So the core of $\nu_{X, f}$ is nonempty if and only if $\nu_{X, f}\left(\mathbf{1}_n\right)=\nu_{\text{cone}(X), F}\left(\mathbf{1}_n\right)$. This proves (i).  Moreover, $\Omega_1=\Omega_2$ implies (ii). 

\end{proof}

\subsection{Appendix C: Proof of Theorem~\ref{thm:nonlinear-packing-const-dep-w}}

\begin{proof} 
We first show the following two sets are the same. 
\begin{eqnarray*}
\Omega_1&:=&\left\{y\in\mathbb{R}^n_{+}: a^Ty\ge \nu_{X, f}(a), \forall a\in\mathbb{B}^n\right\}\\
&=&\left\{y\in\mathbb{R}^n_{+}: a^Ty\ge f(x), \forall x\in \{x\in X(w): Ax\le a\}, \forall a\in \mathbb{B}^n\right\}\\
\Omega_2&:=&\left\{y\in\mathbb{R}^n_{+}: y^TA\ge (f(\mathbf{e}_1),\dots, f(\mathbf{e}_m))\right\}
\end{eqnarray*}

For one direction, choosing $a=A\mathbf{e}_j\in \mathbb{B}^n, j\in [m]$ implies that $x=\mathbf{e}_j\in \{x\in X(w): Ax\le a=A\mathbf{e}_j\}$ because $\mathbf{0}_m\in X(0), \mathbf{e}^m_i\in X(\mathbf{e}^n_i), i\in [n]$.
\begin{eqnarray*}
y\in \Omega_1&\overset{a=A\mathbf{e}_j, x=\mathbf{e}_j, j\in [m]}{\implies}& \left[ (A\mathbf{e}_j)^Ty\ge f(\mathbf{e}_j), j\in [m], y\ge 0 \right]\\
&\iff& \left[y^TA\ge (f(\mathbf{e}_1),\dots, f(\mathbf{e}_m)), y\ge 0\right] \implies y\in \Omega_2
\end{eqnarray*}
For the other direction, $\forall a\in \mathbb{B}^n, \forall x\in \{x\in X: Ax\le a\}$:
\begin{eqnarray*}
y\in \Omega_2&\implies& \left[a^Ty\overset{y\ge 0}{\ge} x^T(A^Ty)\overset{x\ge 0}{\ge} x^T(f(\mathbf{e}_1),\dots, f(\mathbf{e}_m))^T=F(x) \ge f(x)\right]\\
&\implies& y\in \Omega_1,
\end{eqnarray*}
where we used the fact that $y\ge 0$ due to the monotonicity of the game $\nu$ (However, the nonnegativity of $y$ is not needed for partition game due to $Ax=w$). 

Then we have the following relationships:  
\begin{eqnarray*}
\nonumber \nu_{X, f}\left(\mathbf{1}_n\right)&\le &\min\limits_{y\in\mathbb{R}^{n}_{+}}\left\{\mathbf{1}_n^Ty: a^Ty\ge \nu_{X, f}(a), \forall a\in\mathbb{B}^n\right\}\\
\nonumber&=&\min\limits_{y\in\mathbb{R}^{n}}\left\{\mathbf{1}_n^Ty: y\in \Omega_1\right\}\\
&\overset{\Omega_1=\Omega_2}{=}&\min\limits_{y\in\mathbb{R}^{n}} \left[\mathbf{1}_n^Ty: y\in \Omega_2\right]\\
\nonumber&\overset{\text{LP dual}}{=}&\max\limits_{x\in \mathbb{R}^m_{+}} \left[(f(\mathbf{e}_1),\dots, f(\mathbf{e}_m))x: Ax\le \mathbf{1}_n\right]\overset{(\ref{eq:NLP-relax-primal-const-dep-w})-(\ref{eq:NLP-relax-dual-const-dep-w})}{:=}\nu_{\mathbb{R}^m_{+}, F}\left(\mathbf{1}_n\right),
\end{eqnarray*}

So the core of $\nu_{X, f}$ is nonempty if and only if $\nu_{X, f}\left(\mathbf{1}_n\right)=\nu_{\mathbb{R}^m_{+}, F}\left(\mathbf{1}_n\right)$. This proves (i).  $\Omega_1=\Omega_2$ implies (ii). Proposition~\ref{prop:relax-exten}(i)(a) implies (iii).  

\end{proof}

\subsection{Appendix D: Proof of Theorem~\ref{thm:nonlinear-packing-rhs-arb}}
\begin{proof}
We show the following two sets are the same:
\begin{eqnarray*}
\Omega_1&:=&\left\{y\in\mathbb{R}^n_{+}: w^Ty\ge \nu_{X, f}(w), \forall w\in \mathbb{B}^n\right\}\\
&=&\left\{y\in\mathbb{R}^n_{+}: w^Ty\ge f(x), \forall x\in \{x\in X: Ax\le bw\}, \forall w\in \mathbb{B}^n\right\}\\
\Omega_2&:=&\left\{y\in\mathbb{R}^n_{+}: y^TA\ge [(f(b\mathbf{e}_1) ,\dots, f(b\mathbf{e}_m))]\right\}
\end{eqnarray*}
For one direction:
\begin{eqnarray*}
y\in \Omega_1&\overset{x=b\mathbf{e}_j, w^j=A\mathbf{e}_j, j\in [m]}{\implies}& \left[y^TA\ge [(f(b\mathbf{e}_1),\dots, f(b\mathbf{e}_m))], y\ge 0 \right]\\
&\implies& y\in \Omega_2
\end{eqnarray*}
For the other direction: 
\begin{small}
\begin{eqnarray*}
y\in \Omega_2&\implies& \left[\forall w\in \mathbb{B}^n, \forall x\in \{x\in X: Ax\le bw\}: w^Ty\overset{y\ge 0}{\ge} b^{-1} x^T(A^Ty)\overset{x\ge 0}{\ge} b^{-1} x^T(f(b\mathbf{e}_1),\dots, f(b\mathbf{e}_m))^T \overset{\text{Def.}~\ref{deff:indiv-supadd_Z^n}}{\ge} f(x)\right]\\
&\implies& y\in \Omega_1
\end{eqnarray*}
\end{small}

Then we have the following relationship;
\begin{eqnarray*}
\nonumber \nu_{X, f}\left(\mathbf{1}_n\right)&\le&\min\limits_{y\in\mathbb{R}^{n}}\left\{\mathbf{1}_n^Ty: y\in\Omega_1\right\}\\
&\overset{\Omega_1=\Omega_2}{=}&\min\limits_{y\in\mathbb{R}^{n}} \left[\mathbf{1}_n^Ty: y\in\Omega_2\right]\\
\nonumber&\overset{y:=bz}{=}&\min\limits_{z\in\mathbb{R}^{n}}\left[\left(b\mathbf{1}_n^T\right)z: A^Tz\ge b^{-1}(f(b\mathbf{e}_1),\dots, f(b\mathbf{e}_m))^T, z\ge 0\right]\\
\nonumber&=&\max\limits_{x\in \mathbb{R}^m_{+}} \left[b^{-1}(f(b\mathbf{e}_1),\dots, f(b\mathbf{e}_m))^Tx: Ax\le b\mathbf{1}_n\right]:=\nu_{\mathbb{R}^m_{+}, F}\left(\mathbf{1}_n\right)
\end{eqnarray*}

So the core of $\nu_{X, f}$ is non-empty if and only if $\nu_{X, f}\left(\mathbf{1}_n\right)=\nu_{\mathbb{R}^m_{+}, F}\left(\mathbf{1}_n\right)$.  This proves (i). $\Omega_1=\Omega_2$ implies (ii). Proposition~\ref{prop:relax-exten}(i)(a) implies (iii).

\end{proof}

\subsection{Appendix E: Proof of Theorem~\ref{thm:nonlinear-packing-game-obj-constraint}}

\begin{proof} We establish that the following two sets are the same: $\Omega_1=\Omega_2$ 
\begin{eqnarray*}
\Omega_1&:=&\left\{y\in \mathbb{R}^n_{+}: w^Ty\ge f(x, w), \forall x\in \{x\in \mathbb{B}^m: Ax\le w\}, \forall w\in \mathbb{B}^n\right\}\\
\Omega_2&:=&\{y\in \mathbb{R}^n_{+}: y^TA\ge \left(f\left(\mathbf{e}_1, A\mathbf{e}_1\right),\dots, f\left(\mathbf{e}_m, A\mathbf{e}_m\right)\right)\},
\end{eqnarray*}

On one direction, we have
\begin{eqnarray*}
y\in\Omega_1&\overset{w=A\mathbf{e}_j, x=\mathbf{e}_j, j\in [m]}{\implies}& [(A\mathbf{e}_j)^Ty\ge f(\mathbf{e}_j, A\mathbf{e}_j), y\ge 0]\\
&\implies& y^TA\ge  \left(f\left(\mathbf{e}_1, A\mathbf{e}_1\right),\dots, f\left(\mathbf{e}_m, A\mathbf{e}_m\right)\right)\\
&\implies& y\in \Omega_2
\end{eqnarray*}

On the other direction: $\forall x\in \{x\in \mathbb{B}^m: Ax\le w\}, \forall w\in\mathbb{B}^n$:
\begin{eqnarray*}
y\in\Omega_2&\implies& w^Ty\ge x^T(A^Ty)\ge x^T\left(f\left(\mathbf{e}_1, A\mathbf{e}_1\right),\dots, f\left(\mathbf{e}_m, A\mathbf{e}_m\right)\right)^T\overset{Def.~\ref{def:matrix-subadditive}}{\ge} f(x, w)\implies y\in \Omega_1
\end{eqnarray*}

Then we have the following relationships:  
\begin{eqnarray*}
\nonumber \nu_{X, f}\left(\mathbf{1}_n\right)&\le&\min\limits_{y\in\mathbb{R}^{n}_{+}}\left\{\mathbf{1}_n^Ty: w^Ty\ge \max\limits_{x\in \mathbb{B}^m}\left[f(x): Ax\le w \right], \forall w\in \mathbb{B}^n\right\}\\
\nonumber&=&\min\limits_{y\in\mathbb{R}^{n}}\left\{\mathbf{1}_n^Ty: y\in \Omega_1\right\}\\
&=&\min\limits_{y\in\mathbb{R}^{n}} \left[\mathbf{1}_n^Ty: y\in \Omega_2\right]\\
\nonumber&\overset{\text{LP dual}}{=}&\max\limits_{x\in \mathbb{R}^m_{+}} \left[(f(\mathbf{e}_1, A\mathbf{e}_1),\dots, f(\mathbf{e}_m, A\mathbf{e}_m))x: Ax\le \mathbf{1}_n\right]:=\nu_{\mathbb{R}^m_{+}, F}\left(\mathbf{1}_n\right),
\end{eqnarray*}

So the core of $\nu$ is nonempty if and only if $\nu_{X, f}\left(\mathbf{1}_n\right)=\nu_{\mathbb{R}^m_{+}, F}\left(\mathbf{1}_n\right)$. This proves (i).  $\Omega_1=\Omega_2$ implies (ii). Proposition~\ref{prop:relax-exten}(i)(a) implies (iii).  
\end{proof}

\subsection{Appendix F: Proof of Theorem~\ref{thm:appro-core}}

\begin{proof} From the proof of Theorem~\ref{thm:nonlinear-packing-game}, we have 
\begin{eqnarray*}
\nu_{\mathbb{R}^m_{+}, F}\left(\mathbf{1}_n\right)&:=&\max\limits_{x\in \mathbb{R}^m_{+}} \left[(f(\mathbf{e}_1),\dots, f(\mathbf{e}_m))x: Ax\le \mathbf{1}_n\right]\\
&\overset{\text{LP duality}}{=}&\min\limits_{y\in\mathbb{R}^{n}} \left[\mathbf{1}_n^Ty: y\in \Omega_2\right]\overset{\Omega_1=\Omega_2}{=}\min\limits_{y\in\mathbb{R}^{n}}\left\{\mathbf{1}_n^Ty: y\in \Omega_1\right\}\\
&=&\min\limits_{y\in\mathbb{R}^{n}_{+}}\left\{\mathbf{1}_n^Ty: a^Ty\ge \nu_{X, f}(a), \forall a\in\mathbb{B}^n\right\}
\end{eqnarray*}

Let 
\[
y^*\in \argmin\limits_{y\in\mathbb{R}^{n}_{+}}\left\{\mathbf{1}_n^Ty: a^Ty\ge \nu_{X, f}(a), \forall a\in\mathbb{B}^n\right\}
\]

For sufficiency, it is easy to verify the following: 
\begin{eqnarray*}
a^Ty^*&\overset{y^* \text{ feasible}}{\ge}& \nu_{X, f}(a), \forall a\in \mathbb{B}^n\\
\mathbf{1}_n^Ty^*&\overset{y^* \text{ optimal}}{=}&\nu_{\mathbb{R}^m_{+}, F}\left(\mathbf{1}_n\right)\le \gamma\nu_{X, f}\left(\mathbf{1}_n\right),
\end{eqnarray*}
implying that $y^*\in \gamma\text{-core}\left(\nu_{X, f}\right)\implies \gamma\text{-core}\left(\nu_{X, f}\right)\ne \emptyset$.

For necessity, if the $\gamma$-core is non-empty, take one member $y\in \gamma\text{-core}\left(\nu_{X, f}\right)$. Then $y$ is a feasible solution of the minimization above. Therefore, 
\[
\nu_{\mathbb{R}^m_{+}, F}\overset{y^* \text{ optimal}}{=}\mathbf{1}_n^Ty^*\overset{y \text{ feasible}}{\le} \mathbf{1}_n^Ty \overset{y \in \gamma\text{-core}\left(\nu_{X, f}\right)}{\le} \gamma\nu_{X, f}\left(\mathbf{1}_n\right). 
\]

\end{proof}
\end{document}